\title{Topologically nontrivial counterexamples to Sard's theorem}
\author[P. Goldstein]{Pawe\l{}  Goldstein}
\address{Pawe\l{} Goldstein, Institute of Mathematics, Faculty of Mathematics, Informatics and Mechanics, University of Warsaw, Banacha 2, 02-097 Warsaw, Poland} \email{P.Goldstein@mimuw.edu.pl}
\thanks{P.G. was partially supported by National Science Center grant no 2012/05/E/ST1/03232.}
\author[P. Haj\l{}asz]{Piotr Haj\l{}asz}
\address{Piotr Hajlasz, Department of Mathematics, University of Pittsburgh, Pittsburgh, PA 15260, USA}
\email{hajlasz@pitt.edu}
\thanks{P.H.\ was supported by NSF grant DMS-1800457.}
\author[P. Pankka]{Pekka Pankka}
\address{Pekka Pankka, Department of Mathematics and Statistics, P.O. Box 68 (Gustaf H\"allstr\"omin katu 2b), FI-00014 University of Helsinki, Finland}
\email{pekka.pankka@helsinki.fi}
\thanks{P.P.\ was supported by the Academy of Finland project \#297258.}
\newlength{\currentparindent}
\def\rank{{\rm rank\,}}
\def\eps{\varepsilon}
\def\id{{\rm id\, }}
\def\i{\mathfrak{i}}
\def\M{{\mathcal M}}
\def\N{{\mathcal N}}
\def\H{{\mathcal H}}
\def\HH{{\mathscr H}}
\newtheorem{theorem}{Theorem}
\newtheorem{lemma}[theorem]{Lemma}
\newtheorem{corollary}[theorem]{Corollary}
\theoremstyle{definition}
\newtheorem{remark}[theorem]{Remark}
\newcommand{\barint}{
\rule[.036in]{.12in}{.009in}\kern-.16in \displaystyle\int }
\newcommand{\barcal}{\mbox{$ \rule[.036in]{.11in}{.007in}\kern-.128in\int $}}
\newcommand{\overbar}[1]{\mkern 1.7mu\overline{\mkern-1.7mu#1\mkern-1.5mu}\mkern 1.5mu}
\newcommand{\Sph}{\mathbb S}
\def\tB{\tilde{B}}
\newcommand{\bbbp}{\mathbb P}
\newcommand{\bbbb}{\mathbb B}
\newcommand{\bbbz}{\mathbb Z}
\newcommand{\bbbr}{\mathbb R}
\def\mvint_#1{\mathchoice
          {\mathop{\vrule width 6pt height 3 pt depth -2.5pt
                  \kern -8pt \intop}\nolimits_{\kern -3pt #1}}%
          {\mathop{\vrule width 5pt height 3 pt depth -2.6pt
                  \kern -6pt \intop}\nolimits_{#1}}%
          {\mathop{\vrule width 5pt height 3 pt depth -2.6pt
                  \kern -6pt \intop}\nolimits_{#1}}%
          {\mathop{\vrule width 5pt height 3 pt depth -2.6pt
                  \kern -6pt \intop}\nolimits_{#1}}}
\numberwithin{theorem}{section} \numberwithin{equation}{section}
\begin{document}

\subjclass[2010]{Primary: 58C25; Secondary: 55Q25, 55Q40, 58A12}
\keywords{Sard theorem, Freudenthal suspension theorem, Hopf invariant}
\sloppy


\sloppy

\begin{abstract}
We prove the following dichotomy:
if $n=2,3$ and $f\in C^1(\Sph^{n+1},\Sph^n)$ is not homotopic to a constant map,
then there is an open set $\Omega\subset\Sph^{n+1}$ such that $\rank df=n$ on $\Omega$ and $f(\Omega)$ is dense in $\Sph^n$, while
for any $n\geq 4$, there is a map $f\in C^1(\Sph^{n+1},\Sph^n)$ that is not homotopic to a constant map and such that
$\rank df<n$ everywhere. The result in the case $n\geq 4$ answers a question of Larry Guth.
\end{abstract}

\maketitle

\section{Introduction}

In 1942, Sard \cite{sard} proved that if $f\in C^k(\bbbr^m,\bbbr^n)$ for $k>\max\{m-n,0\}$, then the set of critical values of $f$ has measure zero; see also \cite{sternberg}. In particular, if $\M^m$ and $\N^n$ are closed manifolds of dimensions $m\geq n$, respectively, $k>m-n$, and $f\in C^k(\M^m,\N^n)$ is a surjective map, then there is an open set
$\Omega\subset\M^m$ such that $\rank df=n$ everywhere in $\Omega$ and $f(\Omega)$ is dense in $\N^n$.

Sard's theorem is no longer true for $k\leq \max\{m-n,0\}$. In fact, Kaufman showed in \cite{kaufman} that, for each $n\geq 2$,
there exists a surjective map $f\in C^1([0,1]^{n+1},[0,1]^n)$ with $\rank df\leq 1$ {\em everywhere}.
However, Kaufman's mapping is a limit of a uniformly convergent sequence of
mappings into finite, one dimensional, piecewise linear trees, so it is topologically trivial in the sense that mimicking Kaufman's construction
in the case of $C^1$ mappings between spheres $\Sph^{n+1}$ and $\Sph^{n}$ would result in a mapping
$f\in C^{1}(\Sph^{n+1},\Sph^{n})$ with $\rank f\leq 1$ everywhere that is homotopic to a constant map.
Indeed, mappings into trees are contractible and so is their limit.

Since the homotopy groups $\pi_{n+1}(\Sph^n)\neq 0$ are non-trivial for $n\geq 2$ (see e.g.\;\cite{hatcher}),
one may ask whether it is possible to construct a Kaufman type example that is not homotopic to a constant map.

A mapping $f\in C^2(\Sph^{n+1},\Sph^n)$ that is not homotopic to a constant mapping is surjective and hence, according to Sard's theorem, there is an open set
$\Omega\subset\Sph^{n+1}$ having the property that
\begin{equation}
\label{eq1}
\text{$\rank df=n$ everywhere in $\Omega$ and $f(\Omega)$ is dense in $\Sph^n$.}
\end{equation}

In particular, there is no mapping $f\in C^2(\Sph^{n+1}, \Sph^n)$, satisfying $\rank df < n$ everywhere, that is not homotopic to a constant map. Note that the condition $\rank df <n$ is much weaker than Kaufman's $\rank df\le 1$.
This leads to two natural questions.

\noindent {\sc Question 1.}
{\em Is it possible to construct a mapping $f\in C^1(\Sph^{n+1},\Sph^n)$, $n\geq 2$, such that $\rank df<n$ everywhere and $f$ is not homotopic to a constant one?}

\noindent {\sc Question 2.}
{\em Let $n\geq 2$ and $1\leq m<n$ be given.
Is it possible to construct a mapping $f\in C^1(\Sph^{n+1},\Sph^n)$ such that $\rank df\leq m$ everywhere and $f$ is not homotopic to a constant one?}

We note in passing that, in the more general context of $C^1$-mappings between closed manifolds, it is easy to give examples of mappings and manifolds answering both questions. Consider, for example, the smooth map $f:\Sph^1\times\Sph^1\times\Sph^1\to\Sph^1\times\Sph^1$, $(x,y,z) \mapsto (x,y_o)$, where $y_o \in \Sph^1$, which is not homotopic to a constant map but satisfies $\rank df =1$ everywhere.

The questions stated above are essentially due to Larry Guth \cite[p.\ 1889]{guth}, who asked:
{\em We don't know any homotopically non-trivial $C^1$ maps from $\Sph^m$ to $\Sph^n$ with $\rank <n$. Does one exist?}
Guth \cite[Main Theorem]{guth} obtained a partial answer to Question~2 by showing a lower bound for the rank of the derivative of
homotopically non-trivial maps.

\begin{theorem}[Guth]
\label{ThGuth}
If $n\geq 2$ and $f\in C^1(\Sph^{n+1},\Sph^n)$ satisfies $\rank df<\left[\frac{n+2}{2}\right]$, then $f$ is homotopic to a constant map.
\end{theorem}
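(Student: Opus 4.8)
\emph{Proof proposal.} The plan is to prove the theorem by an induction on $n$ that repeatedly \emph{desuspends} the map while lowering the rank bound by one, the base of the induction being a Hopf--invariant computation in dimension $n=2$. Write $k=\rank df$, so the hypothesis reads $k\le\lfloor(n+2)/2\rfloor-1=\lfloor n/2\rfloor$, equivalently $2k<n+1$. The only analytic input from the hypothesis that will be used is the following: since all $(k+1)\times(k+1)$ minors of $df$ vanish identically, $f^*\beta=0$ for every differential form $\beta$ on $\Sph^n$ of degree $\ge k+1$; equivalently, the $(k+1)$-dimensional Jacobian of $f$ is everywhere zero.

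For $n=2$ I would argue directly. The group $\pi_3(\Sph^2)\cong\bbbz$ is detected by the Hopf invariant, which for a $C^1$ map $f\colon\Sph^3\to\Sph^2$ may be written as $H(f)=\int_{\Sph^3}\alpha\wedge f^*\omega$, where $\omega$ is a volume form on $\Sph^2$ and $\alpha$ is any $1$-form with $d\alpha=f^*\omega$; such $\alpha$ exists because $f^*\omega$ is a closed $2$-form on $\Sph^3$ and $H^2(\Sph^3)=0$, and the value of the integral is independent of the choice of $\alpha$. Since $\lfloor(2+2)/2\rfloor=2$, the hypothesis is $\rank df\le1$, so $f^*\omega=0$ (a $2$-form cannot be pulled back nontrivially by a map of rank $\le1$), hence $H(f)=0$ and $f$ is null-homotopic. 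It is exactly this mechanism — a form of degree exceeding $\rank df$ has vanishing pullback — that must be leveraged, more cleverly, for $n\ge3$.

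For $n\ge3$ the heart of the matter is a \emph{geometric desuspension lemma}: a $C^1$ map $h\colon\Sph^{p+1}\to\Sph^{p}$ with $\rank dh\le\ell$ and $2\ell<p+1$ is homotopic to a suspension $\Sigma h'$ of a $C^1$ map $h'\colon\Sph^{p}\to\Sph^{p-1}$ with $\rank dh'\le\ell-1$ (note that suspension raises the rank of the differential by at most one, so $\rank d(\Sigma h')\le\ell$, consistently). Granting this, one starts from $f$ and iterates: at the $j$-th stage one has a $C^1$ map $\Sph^{n+1-j}\to\Sph^{n-j}$ of rank $\le k-j$, and the metastable hypothesis needed to continue, $2(k-j)<n+1-j$, i.e.\ $j>2k-n-1$, holds for every $j\ge0$ precisely because $2k\le n$. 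The process terminates once the rank bound reaches $0$, where the map is constant; retracing the suspensions shows $f$ is null-homotopic (for small $n$ one instead bottoms out at the $\Sph^3\to\Sph^2$ case of the previous paragraph). The whole numerology is dictated by the single most restrictive instance $j=0$ of $2(k-j)<n+1-j$, namely $2k<n+1$, i.e.\ $k\le\lfloor n/2\rfloor$ — which is precisely the hypothesis of the theorem, and explains why the bound is $\lfloor(n+2)/2\rfloor$.

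The main obstacle is proving the desuspension lemma in the $C^1$ category. Homotopy-theoretically every map $\Sph^{p+1}\to\Sph^{p}$ is a suspension (Freudenthal), but one must produce a representative that is a suspension \emph{on the nose} and carries the improved rank bound, and one cannot first smooth $h$, since a generic smoothing destroys the rank bound — this is exactly the $C^1$-versus-$C^2$ phenomenon that the rest of the paper exploits (and indeed, absent the desuspension lemma, the weaker hypothesis $\rank df<n$ does \emph{not} force null-homotopy once $n\ge4$). The idea is to use $2\ell<p+1$ as a general-position hypothesis: because $h^*$ kills all forms of degree $\ge\ell+1$, an Eilenberg/coarea-type estimate shows $h$ contracts $(\ell+1)$-dimensional areas, and in the metastable range $2\ell<p+1$ this rigidity leaves enough room to homotope $h$ into ``suspension form'' along one spherical coordinate while keeping the residual map of rank $\le\ell-1$ and merely $C^1$. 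Executing this geometric-measure-theoretic construction, and checking that the desuspended map is still $C^1$ with the claimed rank bound, is the delicate point where Guth's contraction-of-areas techniques are needed; an alternative route to the same reduction is to detect $[f]\in\pi_{n+1}(\Sph^n)\cong\bbbz/2$ ($n\ge3$) by the functional Steenrod square $\mathrm{Sq}^2_f$ on the mapping cone and to show that its cochain-level description — being quadratic, it unavoidably involves pulling back data of cohomological degree $\ge\lceil(n+2)/2\rceil>k$ — is forced to vanish by the rank hypothesis.
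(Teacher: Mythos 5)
The paper does not contain a proof of this theorem: Theorem~\ref{ThGuth} is quoted directly from~\cite{guth} and used as a black box, so your proposal must be measured against Guth's argument, not against anything in this text.

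Your $n=2$ base case is correct and is the standard Hopf-invariant computation: $\rank df\le 1$ kills $f^*\omega$ for every $2$-form $\omega$, hence $\H f=0$, and $\H\colon\pi_3(\Sph^2)\to\bbbz$ is an isomorphism (Lemma~\ref{l3}). This is the route the paper itself takes via Theorem~\ref{fromHST}.

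For $n\ge3$ there is a genuine gap. Your entire scheme rests on the ``geometric desuspension lemma,'' which you state but do not prove, and which you concede is ``the delicate point where Guth's contraction-of-areas techniques are needed.'' That concession gives away the content of the theorem: the lemma, if true, is essentially equivalent to what is to be shown, and the rest of the proposal is numerology. Worse, as stated the lemma is dubious. After a Freudenthal-type homotopy putting $h$ into suspension form (equator to equator, hemispheres to hemispheres), the natural candidate for the desuspended map is the restriction of the homotoped map to the equator, and restriction to a codimension-one submanifold only yields $\rank\le\ell$, not $\le\ell-1$: nothing in your argument forces the ``vertical'' source direction to contribute a full unit of rank at every equator point. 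The paper's own $n=3$ argument in Section~\ref{sec4} performs exactly this deformation and there the restricted map $h=f_2|_{\Sph}$ is estimated by the chain rule to have rank $\le 2$, with no drop; the contradiction is then extracted from the Hopf invariant of $h$ via Corollary~\ref{fromHST2}, not from a rank drop under restriction. You would also need to work with the smooth suspension $S_\eps$ rather than $S$, since $Sh'$ is not $C^1$ at the poles, and re-verify the rank bound there. Finally, Guth's actual proof of the Main Theorem in~\cite{guth} is a direct quantitative-homotopy argument built on $k$-dilation and a decomposition of the domain, not an inductive desuspension, and your alternative sketch via functional Steenrod squares (which is in fact close in spirit to how Guth handles the $\Sph^4\to\Sph^3$ case in \cite[Proposition~13.4]{guth}) is stated far too loosely to assess. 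In short: the induction skeleton and the arithmetic are consistent, but the single lemma carrying all the weight is neither proved nor made plausible by the means you indicate.
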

Here $[x]$ stands for the integer part of $x$.
In particular, the following maps are necessarily homotopic to constant maps:
\begin{equation}
\label{aaa1}
f\in C^1(\Sph^3,\Sph^2),
\quad
\text{or}
\quad
f\in C^1(\Sph^4,\Sph^3),
\quad
\text{with $\rank df<2$,}
\end{equation}
$$
f\in C^1(\Sph^5,\Sph^4),
\quad
\text{or}
\quad
f\in C^1(\Sph^6,\Sph^5),
\quad
\text{with $\rank df<3$,}
$$
$$
f\in C^1(\Sph^7,\Sph^6),
\quad
\text{or}
\quad
f\in C^1(\Sph^8,\Sph^7),
\quad
\text{with $\rank df<4$.}
$$
On the other hand, in the case of mappings $f\in C^1(\Sph^4,\Sph^3)$, Guth proved a stronger result than that in \eqref{aaa1}. Namely he proved in
\cite[Proposition~13.4]{guth} that if $f\in C^1(\Sph^4,\Sph^3)$ and
and $\rank df<3$, then $f$ is homotopic to a constant map. This led him to the following conjecture:

\noindent {\sc Conjecture~1.}  (Guth)
{\em Let $n\geq 5$ be odd. If $f\in C^1(\Sph^{n+1},\Sph^n)$ and $\rank df<\left[\frac{n+3}{2}\right]$, then $f$ is homotopic to a constant map.}

Note that if $n\geq 2$ is even, then the above claim is true by Theorem~\ref{ThGuth}, and when $n=3$ it is true by \cite[Proposition~13.4]{guth}, but it is
an open problem when $n\geq 5$ is odd. Guth also conjectured that the above estimate for the rank is sharp:

\noindent {\sc Conjecture~2.} (Guth)
{\em If $n\geq 4$, then there is a map $f\in C^1(\Sph^{n+1},\Sph^n)$ with $\rank df\leq \left[\frac{n+3}{2}\right]$ that is not homotopic to a constant map.}

Note that the if $n=2,3$, then the above claim is obvious, because $ \left[\frac{n+3}{2}\right]=n$ so any map satisfies the rank condition.

The aim of this paper is to prove the following result.
\begin{theorem}
\label{main}
For $n=2,3$ and each map $f\in C^1(\Sph^{n+1},\Sph^n)$ not homotopic to a constant map, there is an open set $\Omega\subset\Sph^{n+1}$ such that $\rank df=n$ on $\Omega$ and $f(\Omega)$ is dense in $\Sph^n$. In contrast, for each $n\ge 4$, there is a map $f\in C^1(\Sph^{n+1},\Sph^n)$ that is not homotopic to a constant map and such that $\rank df<n$ everywhere.
\end{theorem}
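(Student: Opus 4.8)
The two assertions are proved by different methods, so I treat them in turn.

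\emph{The case $n=2,3$.}
I argue by contraposition. Let $f\in C^1(\Sph^{n+1},\Sph^n)$ and set $R=\{x:\rank df(x)=n\}$, which is open. If $R=\emptyset$ then $\rank df<n$ everywhere, and $f$ is homotopic to a constant by Theorem~\ref{ThGuth} when $n=2$ (there $\left[\frac{n+2}{2}\right]=2$) and by \cite[Proposition~13.4]{guth} when $n=3$. If $R\ne\emptyset$ and $f(R)$ is dense we take $\Omega=R$ (no open subset of $R$ has a larger image). So it remains to treat the case $R\ne\emptyset$, $\overline{f(R)}\ne\Sph^n$, and to show that then $f$ is homotopic to a constant. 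Fix a closed ball $\overline B\subset\Sph^n\setminus\overline{f(R)}$; since $f(\overline R)\subset\overline{f(R)}$ we get $f^{-1}(\overline B)\cap\overline R=\emptyset$, so $\rank df\le n-1$ on the closed set $f^{-1}(\overline B)$, and consequently $f^*\mu$ vanishes there for every $n$-form $\mu$ on $\Sph^n$.

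\emph{Vanishing of the homotopy invariant.}
Consider first $n=2$, where non-triviality of $f\colon\Sph^3\to\Sph^2$ amounts to $H(f)\ne 0$, $H$ the Hopf invariant. Let $\mu$ be the normalized area form on $\Sph^2$ and $\omega=f^*\mu$; by the previous paragraph $\omega$ is a continuous exact $2$-form on $\Sph^3$ whose support is a compact subset of the open set $V=f^{-1}(\Sph^2\setminus\overline B)$. As $\Sph^2\setminus\overline B$ is a disk, $\mu=d\gamma$ there, so $\lambda:=f^*\gamma$ satisfies $d\lambda=\omega$ on $V$. Picking a primitive $\alpha$ of $\omega$ on $\Sph^3$ and using, on $V$, the identity
\[
\alpha\wedge\omega=\alpha\wedge d\lambda=\omega\wedge\lambda-d(\alpha\wedge\lambda)=-\,d(\alpha\wedge\lambda),
\]
where $\omega\wedge\lambda=f^*(\mu\wedge\gamma)=0$ ($\mu\wedge\gamma$ being a $3$-form on the surface $\Sph^2$), one gets $H(f)=\int_{\Sph^3}\alpha\wedge\omega=-\int_V d(\alpha\wedge\lambda)$, which a Stokes argument on a subdomain of $V$ exhausting $\supp\omega$ reduces to a boundary integral; with the primitives and the subdomain chosen suitably this boundary term vanishes, so $H(f)=0$ and $f$ is homotopic to a constant. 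For $n=3$ the nonzero class in $\pi_4(\Sph^3)\cong\mathbb Z/2$ is $\Sigma\eta$ ($\eta$ the Hopf map), by the Freudenthal suspension theorem; one argues analogously, detecting the $\mathbb Z/2$-invariant after reducing to the $n=2$ picture. I expect this step to be the main obstacle: the differential-form computation must be carried out with only $C^1$ regularity (so $\alpha$ is merely H\"older), the boundary term must be controlled, and the $n=3$ case must be reduced to $n=2$; crucially, Sard's theorem genuinely fails for $C^1$ maps in these dimensions, so one cannot simply pass to regular values.

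\emph{The case $n\ge 4$.}
Now the target homotopy class is $\Sigma^{n-2}\eta\in\pi_{n+1}(\Sph^n)\cong\mathbb Z/2$, nonzero by the Freudenthal suspension theorem. I would produce the map as a $C^1$ limit: construct $f_k\in C^1(\Sph^{n+1},\Sph^n)$, all in the class $\Sigma^{n-2}\eta$, with $f_k\to f$ in $C^1$ and with the open ``bad sets'' $E_k=\{\rank df_k=n\}$ decreasing and $\bigcap_k E_k=\emptyset$. Then $f$ works: it lies in the class $\Sigma^{n-2}\eta$ (uniformly close maps are homotopic), and for each $x$ one has $x\notin E_k$ for all large $k$, hence $\rank df(x)=\rank(\lim_k df_k(x))\le n-1$ because the set of matrices of rank $\le n-1$ is closed. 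For $f_0$ take a smooth model of the join map $\eta * \id_{\Sph^{n-3}}\colon\Sph^{n+1}=\Sph^3 * \Sph^{n-3}\to\Sph^2 * \Sph^{n-3}=\Sph^n$, obtained by reparametrizing (damping) the join coordinate so the map is locally constant near the join ends; a pointwise differential count gives $\rank df_0\le n$, with equality only on an open slab where the reparametrized join coordinate has nonzero derivative. Each $f_{k+1}$ is then obtained from $f_k$ by a Kaufman-type modification supported in $E_k$ — inserting a rescaled low-rank model, rotating which of the $n-2$ suspension directions is active — so as to shrink the bad set; the construction is to be arranged so that $\bigcap_k E_k=\emptyset$, $\sum_k\|f_{k+1}-f_k\|_{C^1}<\infty$, and each modification is homotopic, rel the boundary of its support, to the trivial one. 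The hypothesis $n\ge4$ enters because the rank budget $n-1$ must accommodate $\rank d\eta=2$ together with one unavoidable active suspension direction; for $n=2,3$ there is no room, in agreement with the first half of the theorem. The main difficulty is to organize the iteration so that $C^1$-convergence, $\bigcap_kE_k=\emptyset$, and invariance of the homotopy class hold simultaneously.
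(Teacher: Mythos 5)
Your overall architecture mirrors the paper's — a homotopy-invariant obstruction for $n=2,3$ and an explicit Kaufman-type construction for $n\geq 4$ — and the case split on $R$ in the first half is a correct way to organize the $C^1$ statement. For $n=2$ your Stokes computation has the right shape (with $\alpha-\lambda$ exact on $V$ the boundary term vanishes essentially as you suggest), and the obstacle you flag is the genuine one: $f^*\mu$ is merely continuous and Stokes with Sobolev/H\"older forms must be justified. The paper resolves this through the generalized Hopf invariant $\H_\alpha$ built from the $L^p$-Hodge decomposition (Lemmata~\ref{l1}, \ref{l2}), well defined for Lipschitz maps and invariant under low-rank Lipschitz homotopies. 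Where the proposal genuinely breaks down is at $n=3$: ``one argues analogously, reducing to the $n=2$ picture'' is the hardest step of the theorem and cannot be done the way your sentence suggests. Freudenthal gives $f\simeq Sh$ for some $h:\Sph^3\to\Sph^2$, but that homotopy need not respect $\rank\le 2$; a priori $Sh$ could have full-rank derivative on a set of positive measure even when $f$ does not, so nothing is gained. The paper's Section~\ref{sec4} constructs a rank-nonincreasing deformation of $f$ to a map sending hemispheres to hemispheres: it uses Eilenberg's inequality to find $y_1,y_2$ with $\HH^2(f^{-1}(y_1)\times f^{-1}(y_2))<\infty$, a lines-in-$\bbbr^4$ argument producing a diffeomorphism of $\Sph^4$ that separates the two preimages into opposite open hemispheres, and a retraction of $\Sph^3$; only then does Corollary~\ref{fromHST2} give the contradiction. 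Your sketch offers no substitute for this step.

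For $n\geq 4$ the target class $\Sigma^{n-2}\eta$ and the join model are sensible, but the entire content of the argument lies in the ``Kaufman-type modification'' that you posit and do not construct: a $C^1$ block with $\rank\le n-1$, homotopic rel its boundary support to the piece it replaces, iterable with $\sum_k\|f_{k+1}-f_k\|_{C^1}<\infty$ and $\bigcap_k E_k=\varnothing$. The paper proves this (Lemma~\ref{enough}, following Wenger--Young) explicitly: the map is arranged to land in the $k$-skeleton of a cubical subdivision of $[-\tfrac12,\tfrac12]^{k+1}$, which automatically forces $\rank\le k=n-1$ wherever the image lies in the skeleton, and the removed balls are refilled with self-similar rescaled copies whose derivative norms decay geometrically, so the limit is $C^1$ with vanishing derivative on the residual Cantor set; the homotopy class is tracked by making the restriction to every boundary sphere equal, up to scaling, to the smooth suspension $S_\eps h$, and invoking Lemma~\ref{susp2} and Freudenthal. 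Without such an explicit block and a simultaneous verification of $C^1$ convergence, emptiness of $\bigcap_k E_k$, and homotopy invariance, your iteration scheme is a plan rather than a proof.
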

The case $n=2$ is relatively easy, it follows from Theorem~\ref{ThGuth}, but, in fact, it has been known before, see comments to Theorem~\ref{fromHST}.
The known proofs are based on estimates of the Hopf invariant. For the sake of completeness we provide a variant of such a proof.
The case $n=3$ was proved in \cite[Proposition~13.4]{guth} with a difficult argument based on the Steenrod squares. We provide a very different, and a more elementary
proof based on a generalized Hopf invariant introduced in \cite{HST}.
However, the case $n\geq 4$ is new. It answers Question~1 and Conjecture~2 for $n=4$ in the affirmative.

Modifying our proof slightly, we could show that if $f\in C^1(\Sph^{n+1},\Sph^n)$, $n\geq 3$, and $\rank df<3$, then $f$ is homotopic to a constant map. This is consistent with the estimates obtained by Guth when $n\leq 5$. However, in higher dimensions Theorem~\ref{ThGuth} gives a better estimate.

Note that when $n=2$ or $n=3$ and $f\in C^1(\Sph^{n+1},\Sph^n)$ is not homotopic to a constant map, then the conclusion \eqref{eq1} of Sard's theorem is still true
despite the fact that the mapping $f$ has less regularity than required in Sard's theorem.

We find it somewhat surprising that the situation changes at the dimension $n=4$. For example,
$\pi_4(\Sph^3)=\pi_5(\Sph^4)=\bbbz_2$, so the homotopy groups of the spheres are the same when $n=3$ and $n=4$, but the claim of Theorem~\ref{main}
is different in these dimensions.

The map constructed in the proof of Theorem~\ref{main} in the case $n\geq 4$ has $\rank df=n-1$ on a set of positive measure.
We do not know whether there exists a map $f\in C^1(\Sph^{n+1},\Sph^n)$ which is not homotopic to a constant map and satisfies $\rank df \leq n-2$ everywhere.
Looking for such a map would be a first step towards answering Conjecture~2.

The first part of Theorem~\ref{main} is a consequence of a slightly stronger result:

\begin{theorem}
\label{main2}
If $n=2,3$ and $f:\Sph^{n+1}\to \Sph^n$ is Lipschitz continuous and not homotopic to a constant map,
then there is a set $A\subset\Sph^{n+1}$ of positive measure such that $f$ is differentiable at every point of $A$,
$\rank df=n$ on $A$ and $f(A)$ is dense in $\Sph^n$.
\end{theorem}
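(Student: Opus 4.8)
\emph{Strategy.} I would argue by contradiction, in two steps: first reduce the failure of the conclusion to the assertion that $f^*\omega=0$ almost everywhere for a suitable smooth volume form $\omega$ on $\Sph^n$; then show that this forces $f$ to be null-homotopic, contradicting the hypothesis. It is in the second step that the restriction $n=2,3$ enters, via the (generalized) Hopf invariant.

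\emph{Reduction.} By Rademacher's theorem $f$ is differentiable almost everywhere, and wherever $df$ exists $\rank df\le n$ since $\dim\Sph^n=n$. Put
$G=\{x:\ f\text{ is differentiable at }x,\ \rank df(x)=n\}$
and
$Z=\{x:\ f\text{ is differentiable at }x,\ \rank df(x)\le n-1\}$;
these are measurable and, together with the non-differentiability set (which is null), they cover $\Sph^{n+1}$. Choosing $A=G$, one sees that if the conclusion fails then either $G$ has measure zero or $\overline{f(G)}\ne\Sph^n$. In both cases there is a smooth $n$-form $\omega$ on $\Sph^n$ with $\int_{\Sph^n}\omega=1$ and $f^*\omega=0$ a.e.: the pullback of an $n$-form by a linear map of rank at most $n-1$ is zero, so $f^*\omega=0$ on $Z$ for \emph{any} $\omega$; if $G$ is null then $Z$ has full measure and any such $\omega$ works, while if $\overline{f(G)}\ne\Sph^n$ I take $\omega$ supported in a small ball $V\subset\Sph^n\setminus\overline{f(G)}$, so that in addition $f(x)\notin\supp\omega$ for every $x\in G$.

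\emph{The topological step.} It remains to prove: if $f\colon\Sph^{n+1}\to\Sph^n$ is Lipschitz, $n=2,3$, and $f^*\omega=0$ a.e.\ for some $\omega$ representing the generator of $H^n(\Sph^n;\bbbr)$, then $f$ is null-homotopic. Assume not. Choose smooth maps $f_k\colon\Sph^{n+1}\to\Sph^n$ converging to $f$ uniformly, with uniformly bounded Lipschitz constants and $df_k\to df$ a.e.\ (mollification followed by projection back to $\Sph^n$); then $f_k^*\omega\to f^*\omega=0$ in every $L^q$, $q<\infty$, by dominated convergence, and for $k$ large $f_k$ is homotopic to $f$, hence not null-homotopic. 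For $n=2$, Whitehead's integral formula expresses the Hopf invariant of $f_k$ as $\int_{\Sph^3}\alpha_k\wedge f_k^*\omega$ with $d\alpha_k=f_k^*\omega$; taking $\alpha_k$ produced by Hodge theory on $\Sph^3$ (where $H^1(\Sph^3;\bbbr)=H^2(\Sph^3;\bbbr)=0$) so that $\|\alpha_k\|_{W^{1,3/2}}\le C\|f_k^*\omega\|_{L^{3/2}}$, and using $W^{1,3/2}(\Sph^3)\hookrightarrow L^3(\Sph^3)$ together with H\"older's inequality, the integral tends to $0$; but it equals the integer Hopf invariant of $f_k$, which is nonzero since $f_k$ is not null-homotopic and $H\colon\pi_3(\Sph^2)\to\bbbz$ is injective --- a contradiction. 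For $n=3$ the same scheme should go through with the classical Hopf invariant replaced by the generalized Hopf invariant of \cite{HST}: it detects the nontrivial element of $\pi_4(\Sph^3)=\bbbz_2$ and is computed by an integral formula built from $f^*\omega$ and fixed auxiliary data, which therefore tends to $0$ along $f_k$ --- again a contradiction. With the topological step in hand, $A=G$ witnesses the conclusion.

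\emph{Main obstacle.} The reduction and the $n=2$ computation are essentially routine, the only genuinely analytic point being the smooth-approximation-plus-Hodge-estimate argument that lets one work with merely Lipschitz (rather than $C^1$) maps. The crux of the theorem is the $n=3$ case, i.e.\ setting up the generalized Hopf invariant of \cite{HST} so that it is simultaneously (a) a genuine homotopy invariant detecting the generator of $\pi_4(\Sph^3)$ and (b) given by a formula that manifestly vanishes when $f^*\omega=0$ a.e.; establishing (a), (b) and their compatibility with Lipschitz maps is where the real difficulty lies.
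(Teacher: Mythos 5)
Your reduction step and your treatment of $n=2$ are sound and in the same spirit as the paper's: the paper also passes from ``$A$ has measure zero or $\overline{f(A)}\ne\Sph^n$'' to ``$f^*\alpha=0$ a.e.\ for a suitable volume form $\alpha$,'' and then derives a contradiction from the Hopf invariant. The difference is that you rebuild the needed estimates by hand (mollification, Hodge decomposition, $W^{1,3/2}\hookrightarrow L^3$), whereas the paper invokes the already-packaged generalized Hopf invariant $\H_\alpha$ of \cite{HST}, which is defined directly on Lipschitz maps with $\rank df\le 2n$ a.e.\ and which is invariant under Lipschitz homotopies of bounded rank; either route gives $n=2$.

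The $n=3$ case, however, has a genuine gap, and it is exactly where you locate ``the real difficulty.'' You propose to run the same Whitehead-type integral argument for $f\colon\Sph^4\to\Sph^3$. But there is no such formula in that dimension pair: the generalized Hopf invariant of \cite{HST} is only defined for maps $\Sph^{4n-1}\to\Sph^{2n}$, where $f^*\alpha$ is a $2n$-form with a primitive $\omega$ and $\omega\wedge d\omega$ is a top form on $\Sph^{4n-1}$. For $f\colon\Sph^4\to\Sph^3$ the pullback of the volume form is a $3$-form on $\Sph^4$, any primitive is a $2$-form, and $\omega\wedge d\omega$ would be a $5$-form on a $4$-manifold; nothing to integrate. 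The element of $\pi_4(\Sph^3)\cong\bbbz_2$ is detected by the mod-$2$ Hopf invariant (a Steenrod-square construction, used by Guth), not by any Whitehead integral, so the ``same scheme'' does not go through.

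What the paper actually does for $n=3$ is a dimension reduction, not a direct integral formula. Assume $\rank df\le 2$ a.e.\ and aim for a contradiction. Using Eilenberg's inequality one finds two regular-looking values $y_1\ne y_2$ with $\HH^2(f^{-1}(y_1)\times f^{-1}(y_2))<\infty$; a transversality argument (projecting $f^{-1}(y_1)\times f^{-1}(y_2)$ into $\bbbr\bbbp^3$ and choosing a line direction avoiding the image) produces a diffeomorphism $\Phi$ of $\Sph^4$ sending $f^{-1}(y_1)$ into the open upper and $f^{-1}(y_2)$ into the open lower hemisphere. Post-composing with a smooth retraction of $\Sph^3\setminus(\bbbb(y_1,\eps)\cup\bbbb(y_2,\eps))$ onto an equator $\tilde\Sph^2$ gives a Lipschitz map $f_2$, still homotopic to $f$ and still with $\rank df_2\le 2$ a.e., that maps equator to equator and hemispheres to hemispheres. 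By Lemmata~\ref{susp2} and \ref{susp3}, the restriction $h=f_2|_{\Sph^3}\colon\Sph^3\to\tilde\Sph^2$ is not null-homotopic, hence $\H h\ne 0$; but $f_2|_{\Sph^4_+}$ is a Lipschitz extension of $h$ with $\rank\le 2$ a.e., contradicting Corollary~\ref{fromHST2} (which is the $\Sph^3\to\Sph^2$ Hopf-invariant statement you already have). In short: the missing idea is to move the problem from $\Sph^4\to\Sph^3$ down to $\Sph^3\to\Sph^2$ by a rank-controlled deformation in the spirit of Freudenthal's theorem, rather than to manufacture an integral Hopf invariant in the original dimensions.
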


If now $f\in C^1(\Sph^{n+1},\Sph^n)$ and $\rank df=n$ on $A$, then  $\rank df=n$ on an open set that contains $A$, so the first part of
Theorem~\ref{main} follows.

For $n=2$, Theorem~\ref{main2} follows from the following more general result related to the Hopf invariant.
This result is known and it follows from the so called Hopf invariant inequality, see \cite[Section~3.6]{Gro96}, \cite[pp.\ 358-359]{Gro07},
\cite[p.\ 1805 and p. 1818]{guth}, \cite{riviere}.
\begin{theorem}
\label{fromHST}
If $f:\Sph^{4n-1}\to\Sph^{2n}$ is Lipschitz and $\H f\neq 0$, then there is a set $A\subset\Sph^{4n-1}$ of positive measure such that
$f$ is differentiable at every point of $A$,
$\rank df=2n$ on $A$ and $f(A)$ is a dense subset of $\Sph^{2n}$.
\end{theorem}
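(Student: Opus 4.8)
The plan is to reduce the statement to the classical Whitehead integral formula for the Hopf invariant, the only real work being that $f$ is merely Lipschitz rather than $C^1$. Fix a smooth $2n$-form $\omega$ on $\Sph^{2n}$ with $\int_{\Sph^{2n}}\omega=1$. By Rademacher's theorem $f$ is differentiable almost everywhere, so the pullback $f^*\omega$ is a well-defined bounded measurable $2n$-form on $\Sph^{4n-1}$, and it is weakly closed (as for any Lipschitz map). Since $2n<4n-1$, we have $H^{2n}_{\mathrm{dR}}(\Sph^{4n-1})=0$, so $f^*\omega=d\alpha$ with $\alpha=d^*N(f^*\omega)$, where $N$ is the Green operator of the Hodge Laplacian on $\Sph^{4n-1}$; elliptic regularity gives $\alpha\in W^{1,p}$ for every finite $p$, hence $\alpha$ is bounded and continuous. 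The first step is to prove that, in this generality,
\[
\H f=\int_{\Sph^{4n-1}}\alpha\wedge f^*\omega .
\]
I would establish this by mollification. Choose smooth maps $f_j\colon\Sph^{4n-1}\to\Sph^{2n}$ with $f_j\to f$ uniformly, $f_j\to f$ in $W^{1,p}$ for every finite $p$, and $\sup_j\Lip(f_j)<\infty$ (smooth the Euclidean representative in charts and compose with the nearest-point retraction of a tubular neighbourhood of $\Sph^{2n}$). For $j$ large, $f_j$ is uniformly so close to $f$ that the geodesic homotopy joins them, whence $\H f_j=\H f$; and for the smooth maps $f_j$ the formula $\H f_j=\int\alpha_j\wedge f_j^*\omega$ with $\alpha_j=d^*N(f_j^*\omega)$ is the classical one. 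The uniform Lipschitz bound keeps $\{f_j^*\omega\}$ bounded in $L^\infty$, while $df_j\to df$ in $L^p$ and $f_j\to f$ uniformly force $f_j^*\omega\to f^*\omega$ in $L^1$, hence in every $L^p$ with $p<\infty$; then $\alpha_j=d^*N(f_j^*\omega)\to\alpha$ in $W^{1,p}$ for all finite $p$, hence uniformly. Splitting
\[
\int\alpha_j\wedge f_j^*\omega-\int\alpha\wedge f^*\omega=\int(\alpha_j-\alpha)\wedge f_j^*\omega+\int\alpha\wedge(f_j^*\omega-f^*\omega)
\]
and bounding the two terms by $\|\alpha_j-\alpha\|_\infty\|f_j^*\omega\|_{L^1}$ and $\|\alpha\|_\infty\|f_j^*\omega-f^*\omega\|_{L^1}$ completes the limit passage. (Alternatively, this representation for Lipschitz, or even Sobolev, maps can be quoted from the literature on the Hopf invariant inequality cited above.)

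Granting the formula, put $A=\{x\in\Sph^{4n-1}:f\text{ is differentiable at }x\text{ and }\rank df(x)=2n\}$. Off a null set, $x\notin A$ implies $\rank df(x)<2n$, and then $f^*\omega(x)=0$ since $f^*\omega$ is the pullback of a top-degree form on $\Sph^{2n}$ by a map of rank $<2n$; also $f^*\omega(x)=0$ whenever $f(x)\notin\supp\omega$. Hence, up to a null set, $\{x:f^*\omega(x)\neq0\}\subset\{x\in A:f(x)\in\supp\omega\}$. Since $\int\alpha\wedge f^*\omega=\H f\neq0$, the set on the left has positive measure; therefore $A$ has positive measure and $f(A)\cap\supp\omega\neq\emptyset$. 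The displayed identity holds for every smooth normalized volume form $\omega$ on $\Sph^{2n}$, and we may take $\omega$ supported in an arbitrarily small ball around any prescribed point; the relation $f(A)\cap\supp\omega\neq\emptyset$ then shows that $f(A)$ meets every ball, i.e.\ $f(A)$ is dense in $\Sph^{2n}$. This proves the theorem.

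The main obstacle is the first step — the Whitehead integral representation of $\H f$ for a merely Lipschitz map. It rests on the stability of the integrand $\alpha\wedge f^*\omega$ under smooth approximation in $W^{1,p}$ combined with uniform $L^\infty$ bounds on the derivatives, and on the $L^p\to W^{1,p}$ continuity of $d^*N$. Everything after that — the positivity of the measure of $A$ and the density of $f(A)$ — follows formally once $\omega$ is chosen appropriately.
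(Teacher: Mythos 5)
Your proof is correct, but it takes a genuinely different technical route from the paper's. The paper outsources the hard part to the generalized Hopf invariant $\H_\alpha$ of \cite{HST}, built from the $L^2$-Hodge decomposition for Sobolev forms: the identity $\H_\alpha f = \H f$ for Lipschitz $f$ is established there by homotoping $f$ to a smooth map and invoking rank-constrained homotopy invariance (Lemma~\ref{l2}). Once this is in hand, the paper deduces positive measure of $A$ from the trivial observation that $\rank df<2n$ a.e.\ forces $f^*\alpha=0$, and it proves density by the stretching-homotopy trick: pushing $\Sph^{2n}\setminus\bbbb(y_o,\eps)$ to the south pole yields a Lipschitz homotopic $f_1$ with $\rank df_1<2n$ a.e., contradicting $\H f_1 = \H f\neq 0$. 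You instead establish the Whitehead integral representation $\H f=\int\alpha\wedge f^*\omega$ directly, by mollification in charts plus nearest-point retraction, with the uniform Lipschitz bound and $W^{1,p}$-convergence doing the work (and the continuity of $d^*N:L^p\to W^{1,p}$ together with Sobolev embedding giving $\alpha_j\to\alpha$ uniformly). The payoff is a cleaner finish: with the formula available for \emph{any} smooth $2n$-form $\omega$ with $\int\omega=1$, taking $\omega$ supported in a small ball around an arbitrary point $y$ yields simultaneously that $A$ has positive measure and that $f(A)$ meets every ball — no stretching homotopy needed. The trade-off is that you redo, by approximation, what the paper imports wholesale from \cite{HST}; the paper's choice makes sense in context since the $\H_\alpha$ machinery is needed anyway for Corollary~\ref{fromHST2}, where one cannot reduce to smooth maps.
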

Our proof is similar to the other known proofs. We decided to include details since they play an important role in the proof of Corollary~\ref{fromHST2}.

Since a map  $f:\Sph^{3}\to \Sph^2$ is not homotopic to a constant map if and only if the Hopf invariant $\H f\neq 0$ is non-trivial
(see Remark~\ref{r1}), we readily obtain that Theorem~\ref{fromHST} yields Theorem \ref{main2} in this case.  We note, again in passing,
that for each $n\geq 2$ there are mappings $\Sph^{4n-1}\to\Sph^{2n}$ which have a trivial Hopf invariant but which are not homotopic to a constant map.

The proof of Theorem~\ref{fromHST} (and hence the proofs of Theorems~\ref{main} and~\ref{main2} when $n=2$) is based on a
generalized Hopf invariant defined and studied in \cite{HST}.
This is a non-standard generalization that requires the use of the $L^p$-Hodge decomposition.
The proof of Theorem~\ref{main2} in the case $n=3$ (and hence the proof of Theorem~\ref{main} when $n=3$)
is based on a mixture of methods from geometric measure theory (Eilenberg's inequality),
ideas behind the proof of the Freudenthal suspension theorem \cite{hatcher} and the generalized Hopf invariant from \cite{HST}.
As explained above, Theorem~\ref{main2} proves the first part of Theorem~\ref{main}.

The second part of Theorem~\ref{main}, i.e.\;the case of dimensions $n\geq 4$, is also a consequence of a more general result;
recall that $\pi_n(\Sph^{n-1}) = \bbbz_2$ for $n\geq 4$ (see e.g.\;\cite{hatcher}).
\begin{theorem}
\label{main3}
If $k+1\leq m<2k-1$ and $\pi_m(\Sph^k)\neq 0$, then there is a mapping $f\in C^1(\Sph^{m+1},\Sph^{k+1})$
that is not homotopic to a constant map and such that $\rank df\leq k$ everywhere.
\end{theorem}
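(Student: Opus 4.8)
The idea is to start from a non-trivial homotopy class in $\pi_m(\Sph^k)$ and to realize a representative of its image under suspension by an explicitly controlled formula on $\Sph^{m+1}=\Sph^m * \Sph^0$, arranging that the "suspension direction" contributes nothing to the rank. Concretely, fix a smooth map $g\colon \Sph^m\to\Sph^k$ with $[g]\neq 0$ in $\pi_m(\Sph^k)$. Since $m<2k-1$, the Freudenthal suspension theorem tells us that the suspension homomorphism $\pi_m(\Sph^k)\to\pi_{m+1}(\Sph^{k+1})$ is surjective (indeed an isomorphism for $m<2k-1$), so the (unreduced) suspension $\Sigma g\colon \Sph^{m+1}\to\Sph^{k+1}$ represents a non-trivial class in $\pi_{m+1}(\Sph^{k+1})$. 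Writing points of $\Sph^{m+1}$ as $(\cos\theta)\,e_0 + (\sin\theta)\,x$ with $\theta\in[0,\pi]$, $x\in\Sph^m$, and points of $\Sph^{k+1}$ analogously as $(\cos\psi)\,e_0+(\sin\psi)\,y$, $y\in\Sph^k$, the map $\Sigma g$ sends $(\theta,x)\mapsto(\theta,g(x))$. At an interior point this map has derivative of rank at most $1+\rank dg_x\le 1+k$, which is one too many; the point of the construction is to reparametrize the $\theta$-variable so that the two contributions to the rank never add up.

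The key step is therefore a reparametrization trick. First I would replace $g$ by a homotopic smooth map that is \emph{constant} on a small cap around some point $x_0\in\Sph^m$ and whose image already misses a point — this is harmless up to homotopy. Then, instead of the naive suspension, I would build $f\colon\Sph^{m+1}\to\Sph^{k+1}$ so that it first collapses each "latitude sphere" $\{\theta\}\times\Sph^m$ by a map whose rank is at most $k$, and separately handles the polar directions on the region where $g$ is constant; one clean way is to precompose the suspension with a self-map of $\Sph^{m+1}$ of the form $(\theta,x)\mapsto(\alpha(\theta,x),x)$ and postcompose with a self-map of $\Sph^{k+1}$, choosing $\alpha$ to be smooth, constant ($=0$ or $=\pi$) outside the region where $dg$ has full possible rank, and to depend on $x$ only through the value of $g$, so that wherever $d_x$(second component) has rank $k$ we have $\partial\alpha/\partial\theta$ and all the relevant $x$-derivatives of $\alpha$ supported on a set where the latitude factor is degenerate. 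Making this precise so that (i) $f$ is genuinely $C^1$ across the poles $\theta=0,\pi$, (ii) $\rank df\le k$ at every point, and (iii) $f$ is still homotopic to $\Sigma g$ (hence homotopically non-trivial) is the technical heart of the argument.

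The main obstacle I expect is exactly condition (ii) combined with $C^1$-smoothness at the poles: one must choose the reparametrizing function $\alpha$ and the collapsing of the latitude spheres so that the Jacobian never achieves rank $k+1$, while still being differentiable (not merely continuous) where $\theta\to 0,\pi$ and where the "constant cap" of $g$ meets the rest. I would handle the pole smoothness by arranging $f$ to be locally constant near the poles (using that $g$ is constant near $x_0$ so that a whole neighborhood of a pole maps to a single point), and I would handle the rank bound by a direct block-matrix computation of $df$ in the coordinates $(\theta,x)$: on the region where $\rank dg_x=k$, force $\alpha$ to be locally constant, so $df$ has the single block $dg_x$ of rank $\le k$; on the complementary region, $\rank dg_x\le k-1$, leaving room for the one extra $\theta$-derivative, so $\rank df\le 1+(k-1)=k$. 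The inequality $k+1\le m$ guarantees there is enough room in the domain to interpolate smoothly between these two regimes, and $m<2k-1$ is used only through Freudenthal to keep the class non-trivial; I would close by invoking these two facts explicitly.
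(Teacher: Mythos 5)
Your plan is structurally different from the paper's, and the central step cannot work: no choice of reparametrizing function $\alpha$ can make $f(\theta,x)=(\alpha(\theta,x),g(x))$ satisfy $\rank df\le k$ everywhere while remaining homotopically nontrivial, because the rank constraint forces $f$ to miss a nonempty open subset of $\Sph^{k+1}$, and any such $f$ is null-homotopic. Indeed, let $y_0\in\Sph^k$ be a regular value of the smooth map $g$ (these exist by Sard's theorem). Then $g^{-1}(y_0)$ is a compact $(m-k)$-dimensional submanifold lying in the open set $\{\rank dg=k\}$, and your rank bookkeeping requires $\alpha$ to be locally constant, i.e.\ $d\alpha\equiv 0$, on $(0,\pi)\times\{\rank dg=k\}$, hence in particular on $(0,\pi)\times g^{-1}(y_0)$. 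Since $g^{-1}(y_0)$ has finitely many components, $\alpha$ takes only finitely many values on $(0,\pi)\times g^{-1}(y_0)$; but the $f$-preimage of the open meridian $\{(\psi,y_0):\psi\in(0,\pi)\}$ is exactly $(0,\pi)\times g^{-1}(y_0)$, so $f$ hits only finitely many of these points and therefore omits an open set. The variant where $\alpha=\beta(\theta,g(x))$ and only $\partial_\theta\alpha$ is killed fails identically: $\partial_\theta\beta(\cdot,y_0)=0$ for every regular value $y_0$, a dense set, so by continuity $\beta$ is $\theta$-independent and $f$ factors through $\Sph^m$. (Separately, ``a homotopic map whose image already misses a point'' cannot be meant literally, since a map $\Sph^m\to\Sph^k$ omitting a point is itself null-homotopic.)

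The underlying obstruction is that a nontrivial $C^1$ map $\Sph^{m+1}\to\Sph^{k+1}$ with $\rank df\le k$ must be surjective even though its derivative is everywhere rank-deficient, and no one-shot suspension-type formula can be space-filling in this way. The paper's proof, following Wenger and Young, is a multi-scale construction: Freudenthal-desuspend the nontrivial $\phi\colon\Sph^m\to\partial[-\frac12,\frac12]^{k+1}$ to $h\colon\Sph^{m-1}\to\Sph^{k-1}$ (using $m\le 2k-2$), build a smooth map from $\overbar{\bbbb}^{m+1}$ minus $N=n^{k+1}$ disjoint small balls onto the $k$-skeleton of a cubical subdivision of $[-\frac12,\frac12]^{k+1}$ --- the post-composition with the skeletal projection $R$ is what caps the rank at $k$ --- arranged so that each boundary sphere carries a rescaled copy of $\phi$, and then iterate self-similarly inside each small ball. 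The domain scale factor $\frac2n$ against the target scale $\frac1n$ drives the derivative to zero on the residual Cantor set, producing a global $C^1$ map; the image fills out $[-\frac12,\frac12]^{k+1}$ only in the limit over infinitely many scales. The hypothesis $m\ge k+1$ is used to fit the $n^{k+1}$ balls of radius $\frac2n$ inside $\frac12\bbbb^{m+1}$, while $m<2k-1$ serves both to desuspend $\phi$ and to ensure the resulting map on $\Sph^{m+1}$ is nontrivial.
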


The proof of Theorem~\ref{main3} is based on
a beautiful and surprising construction of
Wenger and Young \cite[Theorem~2]{wengery}, who proved that {\em if $k+1\leq m<2k-1$
and $g:\Sph^m\to\Sph^k$ is Lipschitz continuous, then there is a Lipschitz
extension $G:\overbar{\bbbb}^{m+1}\to\bbbr^{k+1}$ such that $\rank dG\leq k$ almost everywhere.}
Since we are interested in $C^1$ mappings rather than Lipschitz ones, we have to modify their construction to make sure that we can find
a $C^1$ extension $G$ when $g$ is $C^1$. Our construction is explicit, while the arguments in \cite{wengery} are based on homotopy theory.

The article is organized in the following way.
In Section~\ref{FST} we recall well known facts related to
suspension and the Freudenthal suspension theorem.
This material will be needed in the proofs of Theorem~\ref{main2} (for $n=3$) and of Theorem~\ref{main3}.
In Section~\ref{hopf} we discuss the generalized Hopf invariant introduced in \cite{HST} and we end the section with the proof of Theorem~\ref{fromHST}, which easily follows from the properties of the generalized Hopf invariant.
The generalized Hopf invariant will also be used in the proof of Theorem~\ref{main2} for $n=3$.
Recall that Theorem~\ref{fromHST} implies Theorems~\ref{main} and~\ref{main2} for $n=2$.
In Section~\ref{sec4} we prove Theorem~\ref{main2} for $n=3$.
This completes the proofs of Theorems~\ref{main} and~\ref{main2} for $n=2,3$.
In the final Section~\ref{sec5} we prove Theorem~\ref{main3}, which implies Theorem~\ref{main} for $n\geq 4$.

Notation used in the article is pretty standard. By $\bbbb^\ell$ will always denote {\em open} balls while the symbol $B$ can be used to denote open or closed balls.
The hemispheres $\Sph^n_{\pm}$ will always be closed. By a smooth mapping we will always mean a $C^\infty$ smooth one.
By a smooth diffeomorphism defined on a closed domain $\overbar{\Omega}$
we mean a diffeomorphism that smoothly extends to a diffeomorphism in a larger domain that contains $\overbar{\Omega}$.

\noindent
{\bf Acknowledgements.}
We thank the referees for a careful reading of the paper and for helpful remarks.
In particular, we are grateful to a referee for pointing out the work of Larry Guth.

\section{The Freudenthal suspension theorem}
\label{FST}

With a continuous map $f:\Sph^n\to\Sph^k$ we can associate the {\em suspension map} $Sf:\Sph^{n+1}\to\Sph^{k+1}$,
which maps $n$-spheres parallel to the equator to the corresponding $k$-spheres parallel to the equator. On each of such spheres the map $Sf$ is a scaled copy of
$f$.

Some basic and easy to verify properties of the suspension map are listed in the next three lemmata.

\begin{lemma}
\label{susp1}
If the maps $f,g:\Sph^n\to\Sph^k$ are homotopic, then their suspensions $Sf,Sg:\Sph^{n+1}\to\Sph^{k+1}$ are homotopic as well.
\end{lemma}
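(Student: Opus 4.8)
The plan is to suspend a homotopy. Write points of $\Sph^{n+1}\subset\bbbr^{n+1}\times\bbbr$ in the form $(\cos\theta)\,\omega\oplus\sin\theta$ with $\omega\in\Sph^n$ and $\theta\in[-\pi/2,\pi/2]$, so that the two suspension points of $\Sph^{n+1}$ correspond to $\theta=\pm\pi/2$; in these coordinates the suspension of a map $h:\Sph^n\to\Sph^k$ is given by $Sh\big((\cos\theta)\omega\oplus\sin\theta\big)=(\cos\theta)\,h(\omega)\oplus\sin\theta$, which is well defined because at $\theta=\pm\pi/2$ the right-hand side equals the north, resp.\ south, pole of $\Sph^{k+1}$ regardless of $\omega$. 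Now let $H:\Sph^n\times[0,1]\to\Sph^k$ be a homotopy with $H(\cdot,0)=f$ and $H(\cdot,1)=g$, and define $\tilde H:\Sph^{n+1}\times[0,1]\to\Sph^{k+1}$ by $\tilde H\big((\cos\theta)\omega\oplus\sin\theta,\,t\big)=(\cos\theta)\,H(\omega,t)\oplus\sin\theta$.

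The identities $\tilde H(\cdot,0)=Sf$ and $\tilde H(\cdot,1)=Sg$ are immediate from the definition, so the only point requiring attention is continuity of $\tilde H$. Away from the two suspension points the coordinates $(\omega,\theta)$ depend continuously on the point of $\Sph^{n+1}$, so there $\tilde H$ is continuous as a composition of continuous maps. At a suspension point one uses that $\cos\theta\to0$ as $\theta\to\pm\pi/2$: the vector $(\cos\theta)H(\omega,t)\oplus\sin\theta$ converges to the corresponding pole of $\Sph^{k+1}$ uniformly in $\omega\in\Sph^n$ and $t\in[0,1]$, so $\tilde H$ is continuous across the suspension points with the constant value equal to that pole. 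Equivalently, one may factor $\tilde H$ through the quotient map $\Sph^n\times[-1,1]\times[0,1]\to\Sph^{n+1}\times[0,1]$ that collapses $\Sph^n\times\{\pm1\}$: the map $(\omega,s,t)\mapsto$ the suspension point built from $H(\omega,t)$ and the height $s$ is continuous and constant on the collapsed slices, hence descends to a continuous map by the universal property of the quotient topology.

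I expect no real obstacle here; the entire content is the bookkeeping of the degenerate coordinates at the suspension points, dispatched either by the uniform estimate above or by the quotient argument. (An alternative, equally short route is to work with the reduced suspension model $\Sph^{n+1}\cong\Sph^1\wedge\Sph^n$ and invoke functoriality of $X\mapsto\Sph^1\wedge X$ on pointed maps together with the agreement of free and based homotopy classes for maps of spheres; the direct argument above has the advantage of not needing that equivalence.)
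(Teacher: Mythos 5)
Your proof is correct and follows exactly the route the paper indicates: suspend the homotopy $H$ between $f$ and $g$ to obtain a homotopy between $Sf$ and $Sg$. The paper states this in one sentence, while you spell out the coordinates and the continuity check at the suspension points, but the underlying idea is the same.
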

The homotopy between
$Sf$ and $Sg$ is simply the suspension of the homotopy between $f$ and $g$.
\begin{lemma}
\label{susp3}
If a map $f:\Sph^n\to\Sph^k$ is homotopic to a constant map, then its suspension $Sf:\Sph^{n+1}\to\Sph^{k+1}$ is homotopic to a constant map.
\end{lemma}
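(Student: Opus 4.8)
The plan is to deduce this immediately from Lemma~\ref{susp1} together with an inspection of what the suspension of a constant map looks like. First I would fix a point $y_o\in\Sph^k$ and, assuming $f$ is homotopic to the constant map $c\equiv y_o$, invoke Lemma~\ref{susp1} to conclude that $Sf$ is homotopic to $Sc$. It therefore suffices to show that $Sc:\Sph^{n+1}\to\Sph^{k+1}$ is homotopic to a constant map.

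Next I would identify the image of $Sc$. By the definition of the suspension recalled above, $Sc$ maps each $n$-sphere parallel to the equator of $\Sph^{n+1}$ by a scaled copy of $c$, which is again constant, onto a single point of the corresponding $k$-sphere parallel to the equator of $\Sph^{k+1}$. Hence the image of $Sc$ is the meridian arc of $\Sph^{k+1}$ joining its two poles and passing through $y_o$; in particular $Sc$ is not surjective. Picking any point $p\in\Sph^{k+1}$ outside this arc, the map $Sc$ factors through $\Sph^{k+1}\setminus\{p\}$, which is homeomorphic to $\bbbr^{k+1}$ and hence contractible. Composing $Sc$ with a contraction of $\Sph^{k+1}\setminus\{p\}$ produces a homotopy from $Sc$ to a constant map, and concatenating this with the homotopy $Sf\simeq Sc$ from the first step proves the lemma.

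There is essentially no obstacle here. The only point requiring (minor) care is the bookkeeping in the definition of $Sf$: one must check that when $f$ is constant, the "scaled copy of $f$" on each latitude sphere is again constant, so that the image of $Sc$ is the claimed arc rather than something larger. Once this is noted, the non-surjectivity argument and the resulting null-homotopy are routine.
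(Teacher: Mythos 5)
Your proof is correct and follows essentially the same route as the paper: reduce via Lemma~\ref{susp1} to the suspension of a constant map, observe that its image is a meridian arc, and conclude null-homotopy from contractibility of that image (the paper phrases it as "the meridian is contractible," you phrase it as "the map factors through a punctured, hence contractible, sphere" — the same observation).
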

Indeed, since $f$ is homotopic to a constant map $f_o$, $Sf$ is homotopic to $Sf_o$ (Lemma~\ref{susp1}), but the image of $Sf_o$ is a single meridian in $\Sph^{k+1}$ which is contractible, so $Sf_o$ (and hence $Sf$) is homotopic to a constant map.
\begin{lemma}
\label{susp2}
If $F:\Sph^{n+1}\to\Sph^{k+1}$ maps the equator $\Sph^n\subset\Sph^{n+1}$ to the equator $\Sph^k\subset\Sph^{k+1}$,
the upper hemisphere $\Sph^{n+1}_{+}$ to the upper hemisphere $\Sph^{k+1}_{+}$ and
the lower hemisphere $\Sph^{n+1}_{-}$ to the lower hemisphere $\Sph^{k+1}_{-}$, then $F$ is homotopic to the
suspension $Sf$ of the mapping $f=F|_{\Sph^n}:\Sph^n\to\Sph^k$ between the equators.
\end{lemma}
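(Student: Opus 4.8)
The plan is to produce an explicit homotopy $H\colon \Sph^{n+1}\times[0,1]\to\Sph^{k+1}$ from $F$ to $Sf$ by straightening $F$ "latitude by latitude." First I would fix the standard coordinates: write a point of $\Sph^{n+1}$ as $(\cos\theta\, x, \sin\theta)$ with $x\in\Sph^n$ and $\theta\in[-\pi/2,\pi/2]$, so that $\theta=0$ is the equator, $\theta>0$ the upper hemisphere, and similarly on $\Sph^{k+1}$. In these coordinates the suspension $Sf$ of $f=F|_{\Sph^n}$ is $(\cos\theta\,x,\sin\theta)\mapsto(\cos\theta\, f(x),\sin\theta)$: it fixes the latitude angle and applies $f$ on each parallel $n$-sphere. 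The hypotheses on $F$ say precisely that $F$ preserves the three sets $\{\theta=0\}$, $\{\theta>0\}$, $\{\theta<0\}$, so $F$ can be written as $(\cos\theta\, x,\sin\theta)\mapsto(\cos\psi(\theta,x)\, g_\theta(x),\sin\psi(\theta,x))$ where $\psi$ has the same sign as $\theta$ (and vanishes with it), and $g_0=f$.

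The key step is to define the homotopy in two stages, both through maps of the same special form so that continuity and the sphere-valued constraint are automatic. In the first stage I would linearly deform the latitude function $\psi(\theta,x)$ to the function $\theta$ itself, via $\psi_t(\theta,x)=(1-t)\psi(\theta,x)+t\theta$; since $\psi$ and $\theta$ always share the same sign, every $\psi_t$ does too, so the image stays in the correct hemisphere and this is a well-defined homotopy of $\Sph^{n+1}\to\Sph^{k+1}$ maps ending at $(\cos\theta\, x,\sin\theta)\mapsto(\cos\theta\, g_\theta(x),\sin\theta)$. In the second stage I would deform the family of equatorial maps $\theta\mapsto g_\theta$ to the constant family $\theta\mapsto f=g_0$, for instance by the reparametrization $g_{(1-s)\theta}$ in the $s$-variable, i.e. $H_s(\cos\theta\, x,\sin\theta)=(\cos\theta\, g_{(1-s)\theta}(x),\sin\theta)$, which at $s=0$ is the endpoint of stage one and at $s=1$ is exactly $Sf$. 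Concatenating the two stages gives the desired homotopy $F\simeq Sf$.

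The one point that needs a little care — and is the main (though minor) obstacle — is continuity and well-definedness \emph{at the poles}: the coordinate $(x,\theta)$ is singular at $\theta=\pm\pi/2$, where $x$ is undefined. There one must check that each interpolated map still sends the north pole to the north pole and the south pole to the south pole; this follows because $\psi_t$, and the reparametrized latitude $(1-s)\theta$, both equal $\pm\pi/2$ exactly when $\theta=\pm\pi/2$, so the factor $\cos(\cdot)$ multiplying the undefined $\Sph^k$-direction vanishes there and the value is the pole regardless of $x$. A clean way to phrase this without fighting the polar chart is to note that $F$ agrees on a neighborhood of each pole with a map of the form $(\cos\psi(\theta,x)\,g_\theta(x),\sin\psi(\theta,x))$ with $\psi$ continuous on the closed sphere and $\psi(\pm\pi/2,\cdot)=\pm\pi/2$, and that all maps in the homotopy inherit this property; continuity at the poles then follows from $\cos\psi_t\to 0$ uniformly as $\theta\to\pm\pi/2$. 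With this observed, the homotopy is continuous on all of $\Sph^{n+1}\times[0,1]$, which completes the proof.
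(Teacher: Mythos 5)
Your proof takes a genuinely different route from the paper's, and it has a gap. The paper builds the homotopy $F_t$ directly on the sphere: for each $t$, on the equatorial band of vertical width $t$ the map $F_t$ equals the suspension $Sf$, and on each remaining polar cap it is a rescaled copy of $F$ restricted to the corresponding closed hemisphere. These two pieces agree along the boundary $n$-sphere at height $\pm t$ because $F$ restricted to the equator is $f$, and as $t$ increases the band swallows the sphere, giving $F_0=F$ and $F_1=Sf$. No splitting of $F$ into latitude and direction components is involved at any point.

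Your argument, by contrast, rests on writing $F(\cos\theta\,x,\sin\theta)=(\cos\psi(\theta,x)\,g_\theta(x),\sin\psi(\theta,x))$ with $\psi$ and $g$ continuous and then deforming each piece separately, and the gap is precisely in the existence of such a decomposition. The hypotheses do not prevent $F$ from sending a point $q$ in the interior of a hemisphere, at some latitude $\theta_0\in(0,\pi/2)$ say, to the north pole of $\Sph^{k+1}$. There $\psi(q)=\pi/2$ and the direction $g(q)$ is undefined; in general it admits no continuous extension (if, for instance, $F$ is a local diffeomorphism near $q$, the normalized horizontal component of $F$ has a nontrivial winding singularity at $q$). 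For $0<t<1$ your interpolation gives $\psi_t(q)=(1-t)\tfrac{\pi}{2}+t\theta_0<\tfrac{\pi}{2}$, so $\cos\psi_t(q)\neq 0$ and the $\cos\psi_t$ factor no longer suppresses the ill-defined $g(q)$; the intermediate map is not well defined, or not continuous, at $q$. Your closing paragraph of care addresses only the poles of the source sphere (where, incidentally, the argument does go through, even though $\psi(\pm\pi/2,\cdot)=\pm\pi/2$ is not forced by the hypotheses as you tacitly assume), but the troublesome points are interior preimages of the target poles, which the hypotheses allow. To salvage the plan you would first need to homotope $F$, within maps satisfying the hypotheses, to one that hits the target poles only at the source poles, and that reduction requires its own justification; the paper's band-and-caps construction avoids the issue entirely.
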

The homotopy is defined as the continuous family of mappings $F_t$, $0\leq t\leq 1$, such that
on each $n$-sphere parallel to the equator whose vertical distance to the equator is no larger than $t$, the mapping $F_t$ coincides with $Sf$ and on
polar caps consisting of points with the vertical distance to the equator at least $t$, the mapping $F_t$ is a scaled version of the mapping $F$ on the
hemispheres. Then $F_0=F$ and $F_1=Sf$.

The next result is the celebrated Freudenthal suspension theorem \cite[Corollary~4.24]{hatcher}.
\begin{lemma}
\label{freud}
If $n\leq 2k-1$, then every map $F:\Sph^{n+1}\to\Sph^{k+1}$ is homotopic to the suspension $Sf$ of a map $f:\Sph^{n}\to\Sph^k$.
If in addition $n<2k-1$, then $Sf:\Sph^{n+1}\to\Sph^{k+1}$ is homotopic to a constant map if an only if
$f:\Sph^{n}\to\Sph^k$ is homotopic to a constant map.
\end{lemma}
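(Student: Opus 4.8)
The plan is to derive the lemma from the algebraic form of the Freudenthal suspension theorem as stated in \cite[Corollary~4.24]{hatcher}: the suspension homomorphism
\[
\Sigma\colon \pi_{n}(\Sph^{k})\longrightarrow \pi_{n+1}(\Sph^{k+1})
\]
is an epimorphism when $n\le 2k-1$ and an isomorphism when $n<2k-1$. Hence the only real content here is to check that this group-level statement translates into the asserted statements about the concrete map $Sf$.

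First I would set up the dictionary. Under the homeomorphism $\Sph^{n+1}\cong\Sigma\Sph^{n}$ that identifies each latitude $n$-sphere with the corresponding latitude $n$-sphere of the suspension, and likewise for the target, the map $Sf$ defined at the beginning of the section is exactly the unreduced suspension of $f$. Since the spheres are CW complexes, collapsing a contractible arc joining the two suspension points is a homotopy equivalence between the unreduced and the reduced suspension which is compatible with $Sf$ up to homotopy; therefore $Sf$ represents the class $\Sigma[f]\in\pi_{n+1}(\Sph^{k+1})$. Moreover $k\ge 1$ under our hypotheses, so $\pi_1(\Sph^{k+1})=0$ and based homotopy classes of maps $\Sph^{n+1}\to\Sph^{k+1}$ agree with free ones; thus two maps $\Sph^{n+1}\to\Sph^{k+1}$ are homotopic precisely when they represent the same element of $\pi_{n+1}(\Sph^{k+1})$.

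Granting this dictionary, the first assertion is immediate: if $n\le 2k-1$ and $F\colon\Sph^{n+1}\to\Sph^{k+1}$ is given, surjectivity of $\Sigma$ produces a class $[f]\in\pi_n(\Sph^k)$ with $\Sigma[f]=[F]$, and then $Sf$ is homotopic to $F$. For the second assertion, assume $n<2k-1$, so that $\Sigma$ is injective. If $f$ is homotopic to a constant map, then so is $Sf$ by Lemma~\ref{susp3}. Conversely, if $Sf$ is homotopic to a constant map, then $\Sigma[f]=[Sf]=0$ in $\pi_{n+1}(\Sph^{k+1})$, so injectivity of $\Sigma$ gives $[f]=0$, i.e.\ $f$ is homotopic to a constant map.

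The only point that requires any care — and hence the "main obstacle" — is the dictionary step: matching the latitude-preserving construction of $Sf$ used in Lemmas~\ref{susp1}--\ref{susp2} with the reduced suspension appearing in the textbook statement, together with the basepoint bookkeeping. This is routine once one observes that the target spheres are simply connected, so I expect the authors' proof to amount to essentially this short reduction to \cite[Corollary~4.24]{hatcher}.
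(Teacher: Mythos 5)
Your proof is correct and takes essentially the same approach as the paper: the paper simply cites \cite[Corollary~4.24]{hatcher} for Lemma~\ref{freud} and remarks afterwards on the relationship between the geometric statement and the usual algebraic form of the Freudenthal theorem via the reduced suspension homomorphism $\Sigma$. You spell out the translation (reduced vs.\ unreduced suspension, based vs.\ free homotopy) in more detail, but the underlying reduction to the standard Freudenthal statement is the same.
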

It follows from Lemma~\ref{susp3} that if $f$ is homotopic to a constant map, then $Sf$ is homotopic to a constant map.
However, if $n=2k-1$, it may happen that $Sf$ is homotopic to a constant map, even though $f$ is not.

Some ideas from the proof of the Freudenthal theorem are also used in Section~\ref{sec4}. Actually, the ideas from Section~\ref{sec4}
have been used in \cite{EH} to find a somewhat new proof of the Freudenthal theorem (Lemma~\ref{freud}) that uses only elementary methods from differential topology.

The usual statement of the Freudenthal theorem is that the reduced suspension homomorphism
$$
\Sigma:\pi_n(\Sph^k)\to \pi_{n+1}(\Sph^{k+1})
$$
is an epimorphism for $n\leq 2k-1$ and an isomorphism for $n<2k-1$. However, we do not need to use the reduced suspension
in the article, merely the version stated in Lemma~\ref{freud}.

It is important to note that even if $f$ is smooth, the suspension map $Sf$ is not smooth at the north and south poles.
For example, as observed above,
the suspension $Sf_o$ of a constant map $f_o:\Sph^n\to\Sph^k$ maps $\Sph^{n+1}$ into one meridian in $\Sph^{k+1}$.
If we go along a great circle in $\Sph^{n+1}$ that passes through poles at a constant speed, then in the image
of $Sf_o$ we will go back and forth along one meridian, suddenly changing the direction of the constant speed at the poles,
showing that the derivative of $Sf_o$ is discontinuous at the poles. The discontinuity of the derivative  of the
suspension will cause some technical problems in the proof of Theorem~\ref{main3}. However, we can easily correct the suspension to a smooth mapping.
If we parameterize the hemispheres $\Sph^{n+1}_{\pm}$ and $\Sph_{\pm}^{k+1}$ as graphs over the balls
$\overbar{\bbbb}^{n+1}$ and $\overbar{\bbbb}^{k+1}$, then
in these coordinate systems the suspension $Sf:\Sph^{n+1}_{\pm}\to\Sph^{k+1}_{\pm}$ restricted to the hemispheres becomes
$$
\Phi(x)=|x|f\left(\frac{x}{|x|}\right).
$$
The mapping $\Phi$ has discontinuous derivative at the origin, which corresponds to discontinuity of the derivative of $Sf$ at the poles.
If $\lambda_\eps:[0,1]\to [0,1]$ is a smooth and non-decreasing function such that $\lambda_\eps(t)=0$ on $[0,\eps]$ and
$\lambda_\eps(t)=t$ on $[1-\eps,1]$, then the mapping
$$
\Phi_\eps(x)=\lambda_\eps(|x|)f\left(\frac{x}{|x|}\right)
$$
is smooth and it coincides with $\Phi$ near the boundary of $\overbar{\bbbb}^{n+1}$.
The mapping $\Phi_\eps$ induces a smooth mapping $S_\eps f:\Sph^{n+1}\to\Sph^{k+1}$ that is homotopic to $Sf$ and coincides with $Sf$ in a
neighborhood of the equator.

\section{The generalized Hopf invariant}
\label{hopf}

For a smooth map $f:\Sph^{4n-1}\to\Sph^{2n}$, the classical {\em Hopf invariant} is defined as follows (see \cite{BT}).
Let $\alpha_o$ be the volume form on $\Sph^{2n}$ with
$\int_{\Sph^{2n}}\alpha_o=1$. Then $df^*\alpha_o = f^*d\alpha_o =0$. Since the de Rham cohomology $H^{2n}(\Sph^{4n-1})=0$ is trivial,
there is a smooth $(2n-1)$-form $\omega$ on $\Sph^{4n-1}$ such that
$f^*\alpha_o=d\omega$ and {\em the Hopf invariant of $f$} is defined by
$$
\H f=\int_{\Sph^{4n-1}} \omega\wedge d\omega.
$$
The Hopf invariant is invariant under homotopies (\cite[Proposition~17.22]{BT}),
so it can be defined for any continuous map $f:\Sph^{4n-1}\to\Sph^{2n}$. However, it is no longer defined
by the above formula if $f$ is not sufficiently smooth.

\begin{lemma}
\label{l3}
The Hopf invariant is a non-zero group homomorphism $\H:\pi_{4n-1}(\Sph^{2n})\to\bbbz$ and it is an isomorphism when $n=1$.
\end{lemma}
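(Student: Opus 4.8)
The plan is to establish, in turn, that $\H$ descends to a well-defined function on $\pi_{4n-1}(\Sph^{2n})$, that this function is additive, that its values are integers, that it is non-trivial, and --- for $n=1$ --- that it is an isomorphism.

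First I would verify that $\H f$ does not depend on the choices made in its definition. If $f^*\alpha_o=d\omega=d\omega'$, then $\omega-\omega'$ is a closed $(2n-1)$-form; since $0<2n-1<4n-1$ we have $H^{2n-1}_{\mathrm{dR}}(\Sph^{4n-1})=0$, so $\omega'=\omega+d\eta$ for some $\eta$, whence $\omega'\wedge d\omega'-\omega\wedge d\omega=d\eta\wedge d\omega=d(\eta\wedge d\omega)$, which integrates to zero over the closed manifold $\Sph^{4n-1}$ by Stokes' theorem. If $\alpha_o'$ is another volume form with $\int_{\Sph^{2n}}\alpha_o'=1$, then $\alpha_o'-\alpha_o=d\beta$, so $\omega+f^*\beta$ is an admissible primitive for $\alpha_o'$, and an analogous Stokes computation --- using that every form of degree exceeding $2n$ on $\Sph^{2n}$ vanishes --- shows the value is unchanged. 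Combined with the homotopy invariance already recorded, this makes $\H$ a well-defined function $\pi_{4n-1}(\Sph^{2n})\to\bbbr$.

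Next I would pass to the classical geometric description of $\H$ in order to obtain additivity and integrality at once. For smooth $f$ and two distinct regular values $a\neq b\in\Sph^{2n}$, the preimages $f\inv(a)$ and $f\inv(b)$ are disjoint closed oriented $(2n-1)$-submanifolds of $\Sph^{4n-1}$, and taking $\alpha_o$ to be a bump form Poincar\'e dual to $a$ identifies $\H f$ with the linking number $\operatorname{lk}(f\inv(a),f\inv(b))\in\bbbz$ (equivalently, with the cup square of the generator of $H^{2n}$ in the mapping cone $\Sph^{2n}\cup_f e^{4n}$); see \cite{BT}. In particular the values of $\H$ are integers. Additivity follows by representing $[f]+[g]$ by a map $F$ that is constant (equal to a basepoint) outside two disjoint embedded closed balls $D_1,D_2\subset\Sph^{4n-1}$ and whose restriction to $D_i$, with $\partial D_i$ collapsed, represents $[f]$ for $i=1$ and $[g]$ for $i=2$: then $F\inv(a)=f\inv(a)\sqcup g\inv(a)$ and $F\inv(b)=f\inv(b)\sqcup g\inv(b)$ with the $f$-parts inside $D_1$ and the $g$-parts inside $D_2$, cross-linking between $D_1$ and $D_2$ vanishes (a $(2n-1)$-cycle in the ball $D_1$ bounds there, away from $D_2$), and by bilinearity of the linking number $\H F=\H f+\H g$. (The same conclusion can be read off the de Rham definition by splitting $F^*\alpha_o$ into forms supported in $D_1$ and $D_2$ and checking that the mixed terms of $\int\omega\wedge d\omega$ vanish.)

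For non-triviality, the Hopf map $h\colon\Sph^3\to\Sph^2$, $(z_1,z_2)\mapsto[z_1:z_2]$, has fibres over distinct points forming a Hopf link, so $\H h=\pm1$; for general $n$ the Whitehead square $[\iota_{2n},\iota_{2n}]\in\pi_{4n-1}(\Sph^{2n})$ --- essentially the attaching map of the top cell of $\Sph^{2n}\times\Sph^{2n}$ --- has Hopf invariant $\pm2$, since $2n$ is even. Hence $\H\neq0$ in every dimension. Finally, when $n=1$, the long exact homotopy sequence of the Hopf fibration $\Sph^1\hookrightarrow\Sph^3\to\Sph^2$ reads $0=\pi_3(\Sph^1)\to\pi_3(\Sph^3)\to\pi_3(\Sph^2)\to\pi_2(\Sph^1)=0$, with the middle map induced by $h$, so $\pi_3(\Sph^2)\cong\bbbz$ is generated by $[h]$; a homomorphism $\bbbz\to\bbbz$ carrying a generator to $\pm1$ is an isomorphism. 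The part that requires genuine care is the equivalence between the analytic definition $\int_{\Sph^{4n-1}}\omega\wedge d\omega$ and the topological one (linking number, or cup square in the mapping cone), from which additivity and integrality both follow; the remaining ingredients --- the homotopy exact sequence of the Hopf fibration and the value $\H h=\pm1$ --- are classical.
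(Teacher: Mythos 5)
Your argument is correct and, rather than following the paper's route of outsourcing each ingredient to a reference, it reconstructs the lemma from the linking-number picture. The paper handles the three assertions by citation: for the homomorphism property it points to \cite[Proposition~4B.1]{hatcher} (whose proof uses the cup square in the mapping cone $\Sph^{2n}\cup_f e^{4n}$); for non-triviality it invokes Hopf's Satz~II, II$'$ from \cite{Hopf}, which directly constructs maps $\Sph^{4n-1}\to\Sph^{2n}$ with non-zero Hopf invariant; and for the isomorphism at $n=1$ it recalls $\H h=1$ from \cite[Example~17.23]{BT} and leaves implicit the (classical) facts $\pi_3(\Sph^2)\cong\bbbz$ with $[h]$ a generator. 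You instead (i) verify well-definedness of $\int\omega\wedge d\omega$ by a direct Stokes computation, (ii) derive both integrality and additivity from bilinearity of the linking number $\operatorname{lk}(f^{-1}(a),f^{-1}(b))$ after arranging the representative of $[f]+[g]$ to be supported in two disjoint balls, (iii) get non-triviality from the Whitehead square $[\iota_{2n},\iota_{2n}]$ having Hopf invariant $\pm 2$ (valid because $2n$ is even), and (iv) establish $\pi_3(\Sph^2)\cong\bbbz\cdot[h]$ via the long exact sequence of the Hopf fibration. Both routes ultimately rest on the equivalence of the de Rham integral with the topological (linking number / cup square) definition, which you acknowledge and which the paper likewise takes as classical; the gain of your version is that it is self-contained enough that a reader can see why the lemma holds, not merely where it is proved. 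The only cosmetic caveat: your phrase ``the cup square of the generator of $H^{2n}$'' should be read as ``$\alpha\smile\alpha = \H(f)\,\beta$ for generators $\alpha\in H^{2n}(C_f)$, $\beta\in H^{4n}(C_f)$,'' but the intent is clear and the argument is sound.
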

\begin{remark}
\label{r1}
For the proof that $\H$ is a group homomorphism, see \cite[Proposition~4B.1]{hatcher}.
Hopf \cite[Satz II, Satz II']{Hopf} proved that for any $n$, there is a map
$h:\Sph^{4n-1}\to\Sph^{2n}$ with $\H h\neq 0$ and hence the homomorphism
$\H:\pi_{4n-1}(\Sph^{2n})\to\bbbz$ is non-zero.
Since the Hopf invariant of the Hopf fibration $h:\Sph^3\to\Sph^2$ equals $1$ (\cite[Example~17.23]{BT}),
$\H:\pi_3(\Sph^2)\to\bbbz$ is an isomorphism. However, for $n\geq 2$ the Hopf invariant is never an isomorphism.
Indeed, Adams \cite{adams} proved that mappings with Hopf invariant equal $1$ exist only when $n=1,2$ and $4$,
so these are the only cases when one may suspect $\H$ to be an isomorphism, but
$\pi_7(\Sph^4)=\bbbz\times\bbbz_{12}$ and  $\pi_{15}(\Sph^8)=\bbbz\times\bbbz_{120}$, so $\H$ cannot be an isomorphism.
\end{remark}

Let $f:\Sph^{4n-1}\to\bbbr^m$, $m\geq 2n+1$, be a Lipschitz map such that $\rank df\leq 2n$ almost everywhere.
Let $\alpha$ be any $C^\infty$-smooth $2n$-form on $\bbbr^{m}$. Following \cite{HST} we define a generalized Hopf invariant $\H_\alpha f$ as
described below.

According to Lemma~5.4 in \cite{HST}, the form $f^*\alpha\in L^\infty(\bigwedge^{2n}\Sph^{4n-1})$ is weakly closed.
Since the $L^2$-de Rham cohomology of $\Sph^{4n-1}$ in dimension $2n$ is zero (\cite[Proposition~4.5]{HST}), there is a Sobolev
form $\omega\in W^{1,2}(\bigwedge^{2n-1}\Sph^{4n-1})$ such that $d\omega=f^*\alpha$, and we define
$$
\H_\alpha f = \int_{\Sph^{4n-1}}\omega\wedge d\omega.
$$
The main properties of $\H_\alpha$ are described in the following results (see Propositions~5.5 and~5.8 in \cite{HST}).
\begin{lemma}
\label{l1}
If $\omega_1,\omega_2\in W^{1,2}(\bigwedge^{2n-1}\Sph^{4n-1})$ and $d\omega_1=d\omega_2$ a.e., then
the forms $\omega_i\wedge d\omega_i$, $i=1,2$, are integrable and
$$
\int_{\Sph^{4n-1}} \omega_1\wedge d\omega_1 =
\int_{\Sph^{4n-1}} \omega_2\wedge d\omega_2.
$$
In particular, the definition of $\H_\alpha f$ does not depend on the choice of the form $\omega$.
\end{lemma}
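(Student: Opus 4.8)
The plan is to prove two things: that each form $\omega_i\wedge d\omega_i$ is integrable, and an integration-by-parts identity for Sobolev forms on $\Sph^{4n-1}$, from which the asserted equality follows at once. For integrability, observe that $\omega_i\wedge d\omega_i$ is a top-degree form whose (single) coefficient, with respect to a fixed Riemannian volume form on $\Sph^{4n-1}$, is a finite sum of products of a coefficient of $\omega_i$ with a coefficient of $d\omega_i$. Since $\omega_i\in W^{1,2}(\bigwedge^{2n-1}\Sph^{4n-1})$, both $\omega_i$ and its weak exterior derivative $d\omega_i$ have coefficients in $L^2(\Sph^{4n-1})$, so by the Cauchy--Schwarz inequality each such product lies in $L^1$; hence $\omega_i\wedge d\omega_i\in L^1$ and $\int_{\Sph^{4n-1}}\omega_i\wedge d\omega_i$ is well defined.

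Next I would establish the identity
\[
\int_{\Sph^{4n-1}}\omega\wedge d\sigma=\int_{\Sph^{4n-1}}d\omega\wedge\sigma
\]
for all $\omega,\sigma\in W^{1,2}(\bigwedge^{2n-1}\Sph^{4n-1})$. For smooth $\omega,\sigma$ this is Stokes' theorem applied to $d(\omega\wedge\sigma)=d\omega\wedge\sigma+(-1)^{2n-1}\omega\wedge d\sigma$ on the closed manifold $\Sph^{4n-1}$ (together with $(-1)^{2n-1}=-1$). In general, approximate $\omega$ and $\sigma$ in the $W^{1,2}$ norm by smooth forms $\omega_j,\sigma_j$; since both sides of the identity are bilinear in $(\omega,\sigma)$ and, by Cauchy--Schwarz, bounded by $C\|\omega\|_{L^2}\|d\sigma\|_{L^2}$ and $C\|d\omega\|_{L^2}\|\sigma\|_{L^2}$ respectively, they are continuous with respect to $W^{1,2}$-convergence in each argument, and the smooth identity passes to the limit.

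To conclude, set $\eta=\omega_1-\omega_2\in W^{1,2}(\bigwedge^{2n-1}\Sph^{4n-1})$; by hypothesis $d\eta=0$ a.e.\ and $d\omega_1=d\omega_2$. Expanding $\omega_1\wedge d\omega_1=(\omega_2+\eta)\wedge d\omega_2=\omega_2\wedge d\omega_2+\eta\wedge d\omega_2$ and integrating gives
\begin{align*}
\int_{\Sph^{4n-1}}\omega_1\wedge d\omega_1-\int_{\Sph^{4n-1}}\omega_2\wedge d\omega_2
&=\int_{\Sph^{4n-1}}\eta\wedge d\omega_2\\
&=\int_{\Sph^{4n-1}}d\eta\wedge\omega_2=0,
\end{align*}
where the second equality is the integration-by-parts identity above and the last uses $d\eta=0$. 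This proves the stated equality, and applying it to any two forms $\omega_1,\omega_2\in W^{1,2}(\bigwedge^{2n-1}\Sph^{4n-1})$ with $d\omega_1=d\omega_2=f^*\alpha$ shows that $\int_{\Sph^{4n-1}}\omega\wedge d\omega$ is independent of the choice of such $\omega$, so $\H_\alpha f$ is well defined.

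I expect the only point requiring genuine care to be the passage from smooth to $W^{1,2}$ forms in the integration-by-parts identity, i.e., invoking density of smooth forms in $W^{1,2}(\bigwedge^{2n-1}\Sph^{4n-1})$ on the closed manifold $\Sph^{4n-1}$ and the uniform $L^2$ bounds that make both sides of the identity continuous under $W^{1,2}$-convergence. Everything else — the Leibniz sign and the Cauchy--Schwarz integrability estimate — is routine.
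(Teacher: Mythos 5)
The paper does not prove this lemma itself; it simply cites Propositions~5.5 and~5.8 of \cite{HST}, so there is no proof in the paper to compare against. Your argument is, however, correct and is the natural one: integrability of $\omega_i\wedge d\omega_i$ follows from Cauchy--Schwarz since $\omega_i$ and $d\omega_i$ are both in $L^2$; the identity $\int_{\Sph^{4n-1}}\omega\wedge d\sigma=\int_{\Sph^{4n-1}}d\omega\wedge\sigma$ for $W^{1,2}$-forms of degree $2n-1$ follows from Stokes on smooth forms (the sign $(-1)^{2n-1}=-1$ being exactly what is needed) together with the $L^2\times L^2$ boundedness of both sides and density of smooth forms in $W^{1,2}(\bigwedge^{2n-1}\Sph^{4n-1})$ on the closed manifold; and setting $\eta=\omega_1-\omega_2$ with $d\eta=0$ a.e.\ then kills the cross term $\int\eta\wedge d\omega_2=\int d\eta\wedge\omega_2=0$. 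The one ingredient you correctly flag as needing justification, density of smooth forms, is standard on a closed manifold (mollification in charts via a partition of unity, or Friedrichs' lemma if $W^{1,2}$ is taken to mean $\omega,d\omega\in L^2$), so the proof is complete as sketched.
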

\begin{lemma}
\label{l2}
Let $f,g:\Sph^{4n-1}\to\bbbr^m$, $m\geq 2n+1$, be Lipschitz mappings such that
$\rank df\leq 2n$ and $\rank dg\leq 2n$
almost everywhere, and let $\alpha$ be a smooth $2n$-form on $\bbbr^{m}$.
If $H:[0,1]\times \Sph^{4n-1}\to\bbbr^m$ is a Lipschitz homotopy from $f$ to $g$ that satisfies
$\rank dH\leq 2n$ almost everywhere, then $\H_\alpha f=\H_\alpha g$.
\end{lemma}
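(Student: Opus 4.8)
The plan is to imitate the classical proof that the Hopf invariant is a homotopy invariant (cf.\ \cite[Proposition~17.22]{BT}), with de Rham cohomology and smooth exterior calculus replaced by the $L^2$-de Rham cohomology and the calculus of $L^\infty\cap W^{1,2}$ forms from \cite{HST}. First I would normalize the homotopy: replacing $H(t,x)$ by $H(\tau(t),x)$ for a Lipschitz nondecreasing $\tau\colon[0,1]\to[0,1]$ that is constant near $0$ and near $1$, we may assume $H(t,\cdot)=f$ on $[0,\eps]\times\Sph^{4n-1}$ and $H(t,\cdot)=g$ on $[1-\eps,1]\times\Sph^{4n-1}$; this reparametrization does not change the homotopy class and preserves both the Lipschitz bound and the condition $\rank dH\le 2n$ a.e. Put $\Omega=[0,1]\times\Sph^{4n-1}$. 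Since $H$ is Lipschitz, $H^*\alpha\in L^\infty(\bigwedge^{2n}\Omega)$, and by the analogue of \cite[Lemma~5.4]{HST} it is weakly closed; moreover $H^*\alpha\wedge H^*\alpha=H^*(\alpha\wedge\alpha)=0$ a.e., because $dH$ has rank at most $2n$ a.e.\ while $\alpha\wedge\alpha$ has degree $4n>2n$. Because the $L^2$-de Rham cohomology of $\Omega$ in degree $2n$ vanishes --- e.g.\ one doubles $\Omega$ along its boundary to the closed manifold $\Sph^1\times\Sph^{4n-1}$, extends $H$ by reflection (here the $t$-independence near the ends is used), applies the K\"unneth formula together with the $L^2$-Hodge theory of \cite{HST}, and restricts --- there is $\Xi\in W^{1,2}(\bigwedge^{2n-1}\Omega)$ with $d\Xi=H^*\alpha$.

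Next I would prove a Stokes-type identity on $\Omega$. Mollifying $H^*\alpha$ and approximating $\Xi$ by smooth forms in $W^{1,2}$ gives the weak Leibniz rule $d(\Xi\wedge H^*\alpha)=d\Xi\wedge H^*\alpha=H^*\alpha\wedge H^*\alpha=0$ a.e.; hence $\int_\Omega d(\Xi\wedge H^*\alpha)=0$. Writing $\Xi=dt\wedge\mu+\nu$ and $H^*\alpha=dt\wedge\beta_t+\gamma_t$, where $\mu,\nu,\beta_t,\gamma_t$ are $t$-dependent forms on $\Sph^{4n-1}$, the identity $d\Xi=H^*\alpha$ reads $d_{\Sph}\nu=\gamma_t$ and $\partial_t\nu-d_{\Sph}\mu=\beta_t$, while weak closedness of $H^*\alpha$ reads $\partial_t\gamma_t=d_{\Sph}\beta_t$. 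Using these relations one checks that $t\mapsto\int_{\Sph^{4n-1}}\nu(t,\cdot)\wedge\gamma_t$ is absolutely continuous (its derivative is controlled by $\partial_t\nu\in L^2$ and $\beta_t,\gamma_t\in L^\infty$), and a Fubini-type computation, together with $\int_{\Sph^{4n-1}}d_{\Sph}(\cdot)=0$, gives
$$
0=\int_\Omega d(\Xi\wedge H^*\alpha)=\int_{\Sph^{4n-1}}\nu(1,\cdot)\wedge\gamma_1-\int_{\Sph^{4n-1}}\nu(0,\cdot)\wedge\gamma_0 .
$$

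It remains to identify the two terms with $\H_\alpha g$ and $\H_\alpha f$. On the collar $t\in[0,\eps]$ we have $\beta_t=0$, $\gamma_t=f^*\alpha$ and $\partial_t\nu=d_{\Sph}\mu$, hence $\frac{d}{dt}\int_{\Sph^{4n-1}}\nu(t,\cdot)\wedge f^*\alpha=\int_{\Sph^{4n-1}}d_{\Sph}\bigl(\mu(t,\cdot)\wedge f^*\alpha\bigr)=0$, so $\int_{\Sph^{4n-1}}\nu(t,\cdot)\wedge\gamma_t$ is constant on $[0,\eps]$ and therefore equals its average $\int_{\Sph^{4n-1}}\omega_f\wedge f^*\alpha$, where $\omega_f:=\frac1\eps\int_0^\eps\nu(t,\cdot)\,dt$. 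Time-averaging a $W^{1,2}(\Omega)$ form over a $t$-interval produces a form in $W^{1,2}(\bigwedge^{2n-1}\Sph^{4n-1})$, and $d\omega_f=\frac1\eps\int_0^\eps\gamma_t\,dt=f^*\alpha$; hence, by Lemma~\ref{l1}, the term at $t=0$ equals $\int_{\Sph^{4n-1}}\omega_f\wedge d\omega_f=\H_\alpha f$. The same argument on $[1-\eps,1]$ identifies the term at $t=1$ with $\H_\alpha g$, so the displayed identity gives $\H_\alpha f=\H_\alpha g$.

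The step I expect to be the main obstacle is coping with the low regularity of $\Xi$. As a merely $W^{1,2}$ form on the $4n$-dimensional cylinder, $\Xi$ has no well-defined $W^{1,2}$ trace on the $(4n-1)$-dimensional end slices $\{0,1\}\times\Sph^{4n-1}$, which is exactly where one would like to read off $W^{1,2}$ primitives of $f^*\alpha$ and $g^*\alpha$; normalizing $H$ to be $t$-independent near the ends and replacing the naive restriction by the short-time average of the tangential part $\nu$ is what repairs this. The remaining ingredients --- the cylinder version of \cite[Lemma~5.4]{HST}, the $L^2$-Hodge decomposition on $\Sph^1\times\Sph^{4n-1}$, the weak Leibniz rule, and the absolute continuity justifying the Fubini computation --- are technical but routine consequences of the framework of \cite{HST}.
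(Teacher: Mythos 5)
The paper itself contains no proof of this lemma: it is imported verbatim from \cite{HST} (the text points to Propositions~5.5 and~5.8 there), so there is nothing in this article to compare your argument against line by line. That said, your sketch is the natural transplantation of the classical Bott--Tu homotopy-invariance argument to the weak setting, and it is, to the best of my knowledge, the strategy of the cited source: pull $\alpha$ back to the cylinder, use $\rank dH\le 2n$ twice (once to get $d(H^*\alpha)=H^*(d\alpha)=0$ weakly --- note $\alpha$ is not assumed closed, so this genuinely needs the rank hypothesis --- and once to get $H^*\alpha\wedge H^*\alpha=H^*(\alpha\wedge\alpha)=0$ a.e.), find a $W^{1,2}$ primitive $\Xi$, and run a Stokes/Fubini computation in the $t$-variable. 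Your collar normalization together with the time-averaged form $\omega_f$ is exactly the right device for the trace problem you correctly identify, and Lemma~\ref{l1} is what lets you replace the averaged primitive by any other primitive of $f^*\alpha$ at the end.

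Two places where your outline leans hardest on ``routine'' and deserve honest care. First, the existence of $\Xi$: \cite[Proposition~4.5]{HST} is stated for the closed manifold $\Sph^{4n-1}$, and your doubling-plus-K\"unneth route for the cylinder is plausible (the relevant Betti numbers of $\Sph^1\times\Sph^{4n-1}$ in degree $2n$ do vanish for all $n\ge1$) but is a statement you would have to prove, not quote; the $t$-independence near the ends is indeed what makes the reflected map Lipschitz with the same rank bound, so the reduction is legitimate. Second, the identity $\int_\Omega d(\Xi\wedge H^*\alpha)=0$ and the subsequent slicewise computation: the weak Leibniz rule for a $W^{1,2}$ form wedged with a merely weakly closed $L^\infty$ form, and the absolute continuity of $t\mapsto\int_{\Sph^{4n-1}}\nu(t,\cdot)\wedge\gamma_t$ (whose derivative involves pairing $\partial_t\gamma_t=d_{\Sph}\beta_t$, a priori only a distribution, against $\nu(t,\cdot)$), are precisely the estimates that \cite[Proposition~5.5]{HST} is designed to supply; you should make explicit that you are invoking that machinery rather than classical smooth calculus. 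With those two points filled in, the argument is correct.
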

That means the generalized Hopf invariant $\H_\alpha f$ is invariant under homotopies whose rank of the derivative does not exceed $2n$.

If $\alpha$ is a smooth $2n$-form on $\bbbr^{2n+1}$ whose restriction to $\Sph^{2n}$ coincides with the fixed volume form $\alpha_o$
and $f:\Sph^{4n-1}\to\Sph^{2n}\subset\bbbr^{2n+1}$ is a Lipschitz map, then $\rank df\leq 2n$ almost everywhere and hence the generalized Hopf invariant
$\H_{\alpha} f$ is well defined.
\begin{corollary}
\label{c1}
If $f:\Sph^{4n-1}\to\Sph^{2n}$ is Lipschitz continuous, then $\H_{\alpha} f = \H f$.
\end{corollary}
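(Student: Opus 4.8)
The plan is to reduce the identity to the case of a smooth map and then to invoke the independence of the generalized Hopf invariant from the choice of primitive. Note first that $\H_\alpha f$ is indeed defined: since $f$ takes values in the $2n$-dimensional sphere $\Sph^{2n}$, we automatically have $\rank df\leq 2n$ almost everywhere, and $\alpha|_{\Sph^{2n}}=\alpha_o$ by assumption.

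I would begin by replacing $f$ with a homotopic smooth map through an admissible homotopy. By a standard smoothing (e.g.\ mollification in local charts together with a partition of unity), for every $\delta>0$ there is a smooth map $\tilde f_\delta:\Sph^{4n-1}\to\bbbr^{2n+1}$ with $\|\tilde f_\delta-f\|_\infty<\delta$; choosing $\delta$ small enough, the image of the straight-line homotopy $(t,x)\mapsto(1-t)f(x)+t\tilde f_\delta(x)$ lies in a tubular neighborhood $U$ of $\Sph^{2n}$ in $\bbbr^{2n+1}$ on which the nearest-point retraction $\pi\colon U\to\Sph^{2n}$ is smooth. Then $g:=\pi\circ\tilde f_\delta:\Sph^{4n-1}\to\Sph^{2n}$ is smooth and
$$
H(t,x)=\pi\big((1-t)f(x)+t\tilde f_\delta(x)\big)
$$
is a Lipschitz homotopy from $f$ to $g$ taking values in $\Sph^{2n}$, so $\rank dH\leq 2n$ almost everywhere. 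The classical homotopy invariance of the Hopf invariant gives $\H f=\H g$, and Lemma~\ref{l2} gives $\H_\alpha f=\H_\alpha g$. Hence it suffices to prove the identity for the smooth map $g$.

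For smooth $g$, write $g=\iota\circ\bar g$, where $\iota\colon\Sph^{2n}\hookrightarrow\bbbr^{2n+1}$ is the inclusion; then $g^*\alpha=\bar g^*(\iota^*\alpha)=\bar g^*\alpha_o$, since $\alpha|_{\Sph^{2n}}=\alpha_o$. Because $H^{2n}_{\mathrm{dR}}(\Sph^{4n-1})=0$, there is a smooth $(2n-1)$-form $\omega$ on $\Sph^{4n-1}$ with $d\omega=\bar g^*\alpha_o=g^*\alpha$, and by definition $\H g=\int_{\Sph^{4n-1}}\omega\wedge d\omega$. On a compact manifold every smooth form lies in $W^{1,2}$, so $\omega\in W^{1,2}(\bigwedge^{2n-1}\Sph^{4n-1})$ is an admissible primitive of $g^*\alpha$ in the sense used to define $\H_\alpha g$. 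By Lemma~\ref{l1} the value $\int_{\Sph^{4n-1}}\omega\wedge d\omega$ does not depend on which $W^{1,2}$ primitive of $g^*\alpha$ we use, whence $\H_\alpha g=\int_{\Sph^{4n-1}}\omega\wedge d\omega=\H g$. Combining the two reductions yields $\H_\alpha f=\H_\alpha g=\H g=\H f$.

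The only genuinely non-formal point is verifying that the approximating homotopy $H$ is Lipschitz and admissible; this uses the smoothness (hence local Lipschitz continuity) of the nearest-point retraction near $\Sph^{2n}$ and the sup-norm control on the smoothing, and the fact that maps into $\Sph^{2n}$ satisfy the rank bound for free. Everything else is bookkeeping with pullbacks together with the two cited homotopy/primitive-independence properties of the generalized Hopf invariant.
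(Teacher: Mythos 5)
Your proof is correct and follows essentially the same approach as the paper: reduce to a smooth map $g$ homotopic to $f$ via a Lipschitz homotopy into $\Sph^{2n}$ (so that $\rank dH\le 2n$ automatically), then apply Lemma~\ref{l2} for $\H_\alpha$, classical homotopy invariance for $\H$, and observe that the two definitions coincide on smooth maps because any smooth primitive is a $W^{1,2}$ primitive and Lemma~\ref{l1} makes the value independent of the choice. The paper simply states the smoothing step as ``a standard approximation argument'' where you spell out the mollification and nearest-point retraction, but the structure is identical.
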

\begin{proof}
If $g:\Sph^{4n-1}\to\Sph^{2n}$ is smooth, then $\H_{\alpha}g=\H g$, because in that case the definition of $\H_{\alpha} g$ is identical with the
classical definition of the Hopf invariant. If $g$ is homotopic to $f$, then there is also a Lipschitz homotopy
$H:[0,1]\times\Sph^{4n-1}\to\Sph^{2n}$ between $f$ and $g$ (by a standard approximation argument). Since $H$ takes values into $\Sph^{2n}$,
$\rank dH\leq 2n$ a.e. and hence Lemma~\ref{l2} yields
$$
\H_{\alpha} f=\H_{\alpha} g = \H g=\H f,
$$
where the last equality follows from the homotopy invariance of the classical Hopf invariant.
\end{proof}

The next result is essentially contained in \cite[Theorem~1.7]{HST}.

\begin{corollary}
\label{fromHST2}
If $f:\Sph^{4n-1}\to\Sph^{2n}$ is Lipschitz continuous with $\H f\neq 0$ and $F:\overbar{\bbbb}^{4n}\to\bbbr^{2n+1}$ is a Lipschitz extension of $f$,
then $\rank dF=2n+1$ on a set of positive $4n$-dimensional measure.
\end{corollary}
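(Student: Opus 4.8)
The plan is to argue by contradiction: suppose $F\colon\overbar{\bbbb}^{4n}\to\bbbr^{2n+1}$ is a Lipschitz extension of $f$ with $\rank dF\le 2n$ almost everywhere, and derive $\H f=0$, contradicting the hypothesis. The point is that such an $F$ provides a Lipschitz nullhomotopy of $f$ through maps of rank $\le 2n$, to which the homotopy invariance of the generalized Hopf invariant (Lemma~\ref{l2}) applies. Concretely, I would reparametrize: identify $\overbar{\bbbb}^{4n}$ with the quotient of $[0,1]\times\Sph^{4n-1}$ that collapses $\{0\}\times\Sph^{4n-1}$ to the center, via $(t,x)\mapsto tx$, so that $H(t,x):=F(tx)$ is a Lipschitz map $[0,1]\times\Sph^{4n-1}\to\bbbr^{2n+1}$ with $H(1,\cdot)=f$ and $H(0,\cdot)\equiv F(0)$ constant. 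Since $H$ factors through $F$ up to the smooth map $(t,x)\mapsto tx$, the chain rule gives $\rank dH(t,x)\le \rank dF(tx)$ at points where $F$ is differentiable; as the reparametrization map is smooth with bounded derivative and is a diffeomorphism off $\{t=0\}$, the set where $H$ fails to be differentiable or where $\rank dH>2n$ has measure zero in $[0,1]\times\Sph^{4n-1}$. Thus $H$ is a Lipschitz homotopy from $f$ to a constant map with $\rank dH\le 2n$ a.e.

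Next I would fix a smooth $2n$-form $\alpha$ on $\bbbr^{2n+1}$ whose restriction to $\Sph^{2n}$ is the normalized volume form $\alpha_o$, so that the generalized Hopf invariant $\H_\alpha$ is defined for all the maps in question (each has rank of derivative $\le 2n$ a.e.: this is automatic for maps into $\Sph^{2n}$, and holds for $f$ and the constant map a fortiori). Applying Lemma~\ref{l2} to the homotopy $H$ yields $\H_\alpha f=\H_\alpha(\text{const})$. For the constant map $c$, one has $c^*\alpha=0$, so we may take $\omega=0$ in the definition and $\H_\alpha c=\int_{\Sph^{4n-1}}0=0$. On the other hand, Corollary~\ref{c1} gives $\H_\alpha f=\H f$. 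Combining, $\H f=\H_\alpha f=\H_\alpha c=0$, contradicting $\H f\neq 0$. Hence no such extension exists, i.e.\ $\rank dF=2n+1$ on a set of positive $4n$-dimensional measure.

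The main obstacle is the measure-theoretic bookkeeping in the first step: one must check that passing from $F$ to $H=F\circ(\text{polar map})$ does not create a positive-measure set of bad points. The polar coordinate map $p\colon(t,x)\mapsto tx$ is Lipschitz and smooth, and its Jacobian degenerates only on $\{t=0\}$, which has measure zero; away from there it is a local diffeomorphism, so $p$ maps null sets to null sets and, by the chain rule, $d H=dF\circ dp$ wherever $F$ is differentiable at $p(t,x)$. The preimage under $p$ of the null set $\{F\text{ not differentiable}\}\cup\{\rank dF>2n\}$ is null off $\{t=0\}$, hence null in $[0,1]\times\Sph^{4n-1}$, so indeed $\rank dH\le 2n$ a.e. One should also note $H$ is genuinely Lipschitz (composition of Lipschitz maps) and is a homotopy in the required sense. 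With this verified, the rest is a direct invocation of Lemmas~\ref{l1}--\ref{l2}, Corollary~\ref{c1}, and the triviality of $\H_\alpha$ on constants.
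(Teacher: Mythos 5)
Your proof is correct and follows essentially the same route as the paper: contradict the hypothesis by turning the extension $F$ into the radial homotopy $H(t,\theta)=F(t\theta)$, check that $\rank dH\le 2n$ a.e., and then apply Lemma~\ref{l2} together with Corollary~\ref{c1} to conclude $\H f = \H_\alpha f = \H_\alpha(\mathrm{const}) = 0$. The extra care you take with the measure-theoretic bookkeeping for the polar reparametrization is sound, just spelled out in more detail than the paper bothers with.
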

\begin{proof}
Let $f$ and $F$ be as in the statement. Suppose to the contrary that $\rank dF\leq 2n$ almost everywhere.
Then the Lipschitz homotopy
$$
H(t,\theta):[0,1]\times\Sph^{4n-1}\to\bbbr^{2n+1},
\qquad
H(t,\theta)=F(t\theta)
$$
from the constant map $g(\theta)=F(0)$ to $f$ satisfies $\rank dH\leq 2n$ almost everywhere. Since the mappings $f,g, H$ satisfy assumptions of
Lemma~\ref{l2}, we have that Lemma~\ref{l2} together with Corollary~\ref{c1} yield
$$
\H f = \H_{\alpha} f = \H_{\alpha} g =0,
$$
which is a contradiction.
\end{proof}

Theorem~\ref{fromHST} is now a straightforward consequence of Corollary~\ref{c1}.

\begin{proof}[Proof of Theorem~\ref{fromHST}]
First we will prove that $\rank df=2n$ on a set of positive measure. Suppose to the contrary that $\rank df<2n$ almost everywhere.
Let $\alpha$ be as in Corollary~\ref{c1}. Then $f^*\alpha=0$, so $\H_{\alpha} f=0$ and hence $\H f=\H_{\alpha}f=0$, which is a contradiction.
This proves that the set $A$ where $df$ exists and satisfies $\rank df=2n$ has positive measure. It remains to prove that the set $f(A)$ is dense in $\Sph^{2n}$.
Suppose to the contrary that $f(A)\cap \bbbb(y_o,\eps)=\varnothing$ for some ball $\bbbb(y_o,\eps)\subset\Sph^{2n}$.
Then stretching along meridians with $y_o$ regarded as the north pole we can find a Lipschitz homotopy between $f$ and a mapping $f_1$ which maps $A$ to the south pole.
Hence $\rank df_1<2n$ almost everywhere, so by the first part of the proof $\H f_1=0$ and therefore
$0\neq \H f=\H f_1=0$, which is a contradiction.
\end{proof}

\section{Proof of Theorem~\ref{main2}}
\label{sec4}

Recall that Theorem~\ref{fromHST} completes the proof of Theorem~\ref{main2} (and hence that of Theorem~\ref{main}) when $n=2$. Thus we can assume that $n=3$.

Let $f:\Sph^4\to \Sph^3$ be a Lipschitz map that is not homotopic to a constant map.
Let $A$ be the set of points where $f$ is differentiable and $\rank df=3$. We need to prove that $A$ has positive measure and that
its image $f(A)$ is dense in $\Sph^3$. Once we prove that the set $A$ has positive measure, the fact that the set $f(A)$ is dense in $\Sph^3$
will follow from the same argument as the one
in the last step in the proof of Theorem~\ref{fromHST}. Thus it remains to prove that the measure of $A$ is positive.
Assume, to the contrary, that $\rank df\leq 2$ almost everywhere.

The idea is to show that $f$ can be deformed to a Lipschitz map with the rank of the derivative less than or equal to $2$, that maps the equator
$\Sph^3$ of $\Sph^4$ to the equator $\Sph^2$ of $\Sph^3$ and hemispheres to hemispheres. Since the map $f$ is not homotopic to the constant map,
it easily follows from Lemmata~\ref{susp3} and~\ref{susp2}
that the map between the equators
is not homotopic to a constant map either. Hence its Hopf invariant is not zero (Lemma~\ref{l3}).
However, it follows now from Corollary~\ref{fromHST2} that the rank of the derivative of its
extension to the upper (or lower) hemisphere has to be equal $3$ on a set of positive measure, which is a contradiction.

The idea of deformation of the map to a map
that sends the equator to the equator and hemispheres to the hemispheres is closely related to the proof of the Freudenthal suspension theorem
(i.e., Lemma~\ref{freud}).
Indeed, this and Lemma~\ref{susp2} imply that
$f:\Sph^{4}\to\Sph^3$ is
homotopic to the suspension of a map between equators, which is a part of the statement of Freudenthal's theorem.
However, we cannot use the Freudenthal theorem in the proof of Theorem~\ref{main2} directly, because the homotopy between $f$ and the suspension map
may possibly increase the rank of the derivative, i.e.
the homotopic suspension map may have the rank of the derivative equal $3$ on a set of positive measure and we will not obtain any contradiction.

In the first step of the construction of the deformation we need to find two points $y_1,y_2\in\Sph^3$ with `small' pre-images $f^{-1}(y_1)$, $f^{-1}(y_2)$.
To do this we use the next lemma.

\begin{lemma}
\label{eilen}
If $f:\M^m\to \N^n$ is a Lipschitz map between closed Riemannian manifolds of dimensions $m$ and $n$ respectively, then $\HH^{m-n}(f^{-1}(y))<\infty$ for a.e. $y\in \N^n$.
\end{lemma}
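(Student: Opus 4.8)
The plan is to deduce the statement from the Eilenberg inequality (the coarea-type inequality for Lipschitz maps), which bounds an integral of Hausdorff measures of fibers by the Lipschitz constant and the measure of the target. Concretely, I would use the version stating that if $f\colon X\to Y$ is an $L$-Lipschitz map between metric spaces and $g\colon X\to[0,\infty]$ is Borel, then
\[
\int_Y^* \left(\int_{f^{-1}(y)} g\, d\HH^{m-n}\right) d\HH^n(y) \le C(m,n)\, L^n \int_X g\, d\HH^m,
\]
where $C(m,n)$ is a dimensional constant and $\int^*$ denotes the upper integral. I would first recall or cite this inequality (it is classical; see Eilenberg--Harrold, or Federer's book, or the exposition in the geometric measure theory literature), phrasing it on Riemannian manifolds where $\HH^m$ restricted to $\M^m$ is (a constant multiple of) the Riemannian volume, and similarly for $\N^n$.

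The main steps are then short. First, since $\M^m$ is a closed Riemannian manifold it has finite $m$-dimensional Hausdorff measure, so taking $g\equiv 1$ the right-hand side $C(m,n) L^n \HH^m(\M^m)$ is finite, where $L=\Lip(f)<\infty$ because $\M^m$ is compact. Hence the function
\[
y\longmapsto \HH^{m-n}\bigl(f^{-1}(y)\bigr)
\]
is (after correcting on a set of $\HH^n$-measure zero, as guaranteed by the upper-integral formulation) a measurable function on $\N^n$ with finite integral with respect to $\HH^n$. Second, a nonnegative function with finite integral over the finite-measure space $\N^n$ is finite $\HH^n$-almost everywhere; since $\HH^n$ on $\N^n$ is comparable to Riemannian volume, this is exactly the assertion $\HH^{m-n}(f^{-1}(y))<\infty$ for a.e.\ $y\in\N^n$.

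The only genuine subtlety — and the step I expect to be the main obstacle to write cleanly — is the measurability issue: a priori $y\mapsto \HH^{m-n}(f^{-1}(y))$ need not be Borel, which is why the Eilenberg inequality is stated with an upper integral. I would handle this by invoking the inequality in its upper-integral form, which immediately yields that the function is bounded above by an $\HH^n$-integrable (measurable) majorant off a null set; alternatively one can note that $\HH^{m-n}$ restricted to fibers can be replaced by the Hausdorff measure built from coverings, for which a measurable selection / slicing argument gives a Borel representative. Either way, no further geometry of $\M^m$ or $\N^n$ is needed beyond compactness, so the argument is robust. I would keep the write-up to a few lines, citing Eilenberg's inequality as a black box and spelling out only the ``finite integral implies a.e.\ finite'' reduction.
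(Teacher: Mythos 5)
Your proposal is correct and follows exactly the paper's route: the paper also deduces the lemma directly from Eilenberg's inequality (citing Burago--Zalgaller, Theorem~13.3.1), noting as you do that compactness of $\M^m$ makes the right-hand side finite and that a nonnegative function with finite (upper) integral is finite almost everywhere. You simply spell out the details and the measurability caveat that the paper leaves implicit.
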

Here $\HH^{m-n}$ stands for the Hausdorff measure. This lemma is a direct consequence of Eilenberg's inequality~\cite[Theorem~13.3.1]{buragoz}.
In particular, for almost all $y\in\Sph^3$, $\HH^{1}(f^{-1}(y))<\infty$ and we would like to conclude that for almost all $y_1,y_2\in\Sph^3$,
$\HH^2(f^{-1}(y_1)\times f^{-1}(y_2))<\infty$. However, it cannot be directly concluded from the estimate for the Hausdorff measure of the factors. In fact, the Hausdorff
dimension of the Cartesian product of compact sets $A,B$ can be larger than the sum of Hausdorff dimensions of the sets $A$ and $B$: Theorem~5.11 in \cite{falconer}
provides an example of compact sets $A,B\subset\bbbr$, each of Hausdorff dimension zero, and such that $\HH^1(A\times B)>0$.
Fortunately, a small trick allows us to show that  $\HH^2(f^{-1}(y_1)\times f^{-1}(y_2))<\infty$ for almost all $y_1,y_2\in\Sph^3$ as a direct consequence of Lemma~\ref{eilen}:
Since the map
$$
F:\Sph^4\times\Sph^4\to\Sph^3\times\Sph^3,
\qquad
F(x_1,x_2)=(f(x_1),f(x_2))
$$
is Lipschitz continuous, it follows from Lemma~\ref{eilen} that
$$
\HH^2(f^{-1}(y_1)\times f^{-1}(y_2)) = \HH^2(F^{-1}(y_1,y_2))<\infty
$$
for almost all $y_1,y_2\in\Sph^3$.

Choose such points $y_1,y_2\in\Sph^3$, $y_1\neq y_2$. The sets $f^{-1}(y_1)$ and $f^{-1}(y_2)$ are compact and disjoint.
We want to show that there is a diffeomorphism of $\Sph^4$ that moves one of the
sets to a small neighborhood of a north pole and the other one to a small neighborhood of a south pole.
To construct such a diffeomorphism it will be easier to work in $\bbbr^4$ rather than with $\Sph^4$, but that can be easily achieved.
Let $z\in\Sph^4\setminus (f^{-1}(y_1)\cup f^{-1}(y_2))$  and consider the stereographic projection from
$\Sph^4$ onto $\bbbr^4$ with $z$ as a north pole. With a slight abuse of notation we will denote by $f^{-1}(y_1)$ and $f^{-1}(y_2)$ the corresponding (compact and disjoint) images in $\bbbr^4$.

Consider the map
\begin{equation}
\label{eq2}
\pi: f^{-1}(y_1)\times f^{-1}(y_2)\to\bbbr\bbbp^3
\end{equation}
which assigns to any pair of points $x_1\in f^{-1}(y_1)$ and $x_2\in f^{-1}(y_2)$ the line passing through $x_1$ and $x_2$. It is easy to see that $\pi$ is Lipschitz,
because the distance between the sets $f^{-1}(y_1)$ and $f^{-1}(y_2)$ is positive. Since
$\HH^2(f^{-1}(y_1)\times f^{-1}(y_2))<\infty$, the set
$$
\pi(f^{-1}(y_1)\times f^{-1}(y_2))\subset\bbbr\bbbp^3
$$
is compact and has finite two-dimensional Hausdorff measure.
Since the space $\bbbr\bbbp^3$ is three dimensional, the mapping \eqref{eq2} is not surjective and we can find
$$
v\in \bbbr\bbbp^3\setminus\pi(f^{-1}(y_1)\times f^{-1}(y_2)).
$$
This simply means that the lines parallel to $v$ and passing though the points of $f^{-1}(y_1)$ do not
intersect $f^{-1}(y_2)$. Denote the union of all lines parallel to $v$ and passing through $f^{-1}(y_1)$ by $V$.
Note that $V$ is closed, so there is an open set $U$ containing $V$, whose closure is disjoint from
the compact set $f^{-1}(y_2)$. The direction $v$ defines a vector field on $V$ which can be extended to a bounded and smooth vector field with support contained in $U$.
The flow of this vector field defines a one-parameter group of diffeomorphisms of $\bbbr^4$ which moves $f^{-1}(y_1)$ arbitrarily far away and does not move $f^{-1}(y_2)$.
Making a small adjustment so that the vector field vanishes near the infinity we may transform it back to $\Sph^4$ through the stereographic projection.
As a consequence we find a diffeomorphism of $\Sph^4$ which does not move $f^{-1}(y_2)$ but it moves $f^{-1}(y_1)$ arbitrarily close to the north pole.
Now we can find a one-parameter group of diffeomorphisms that moves points along meridians towards the south pole but does not move the image of $f^{-1}(y_1)$ that is already near the north pole. This family of diffeomorphisms moves $f^{-1}(y_2)$ to the interior of the closed lower hemisphere $\Sph^4_-$ of $\Sph^4$.
This simply means that there is a diffeomorphism $\Phi:\Sph^4\to\Sph^4$ such that $\Phi(f^{-1}(y_1))\subset{\rm int}\,\Sph^4_+$
and $\Phi(f^{-1}(y_2))\subset{\rm int}\,\Sph^4_-$.
Observe that $\Phi$ is homotopic to the identity, because $\Phi$ is constructed through two one-parameter groups of diffeomorphisms connecting $\Phi$ to the identity
(or simply because any orientation preserving diffeomorphism of $\Sph^4$ is homotopic to the identity, as a mapping of degree $1$). Hence
$f_1=f\circ\Phi^{-1}$ is homotopic to $f$. Let $\Sph=\Sph^4_+\cap\Sph^4_-$ be the equator of $\Sph^4$.

Note that $y_1\not\in f_1(\Sph^4_-)$ because $f_1^{-1}(y_1)=\Phi(f^{-1}(y_1))\subset {\rm int}\,\Sph^4_+$. Similarly
$y_2\not\in f_1(\Sph^4_+)$. Hence, for a small $\eps>0$,
$$
\bbbb(y_1,\eps)\cap f_1(\Sph^4_-)=\bbbb(y_2,\eps)\cap f_1(\Sph^4_+)=\varnothing.
$$
Let $\Psi_t:\Sph^3\to \Sph^3$ be a continuous family of smooth mappings such that $\Psi_0=\id$ and
$\Psi_1$ retracts $\Sph^3\setminus (\bbbb(y_1,\eps)\cup \bbbb(y_2,\eps))$ onto an equator $\tilde{\Sph}$ of $\Sph^3$ that separates the sets
$\bbbb(y_1,\eps)$ and $\bbbb(y_2,\eps)$. Namely, $\Psi_1$ stretches the balls
$\bbbb(y_{i},\eps)\cap\Sph^3$, for $i=1,2$, in $\Sph^3$ onto the hemispheres $\Sph^3_{\pm}$ of $\Sph^3$, retracting everything what is between these $\eps$-balls onto the equator.

Clearly, $f_1$ (and hence $f$) is homotopic to
$$
f_2=\Psi_1\circ f_1=\Psi_1\circ f\circ\Phi^{-1}.
$$
Note that
\begin{equation}
\label{e1}
f_2(\Sph)\subset\tilde{\Sph},
\quad
f_2(\Sph^4_+)\subset\Sph^3_+,
\quad
\text{and}
\quad
f_2(\Sph^4_-)\subset\Sph^3_-.
\end{equation}
Indeed,
$$
f_1(\Sph^4_+)\subset\Sph^3\setminus \bbbb(y_2,\eps)
\quad
\text{so}
\quad
f_2(\Sph^4_+)\subset \Psi_1(\Sph^3\setminus \bbbb(y_2,\eps))=\Sph^3_+.
$$
Similarly $f_2(\Sph^4_-)\subset \Sph^3_-$ and hence
$$
f_2(\Sph)=f_2(\Sph^4_+\cap \Sph^4_-)\subset\Sph^3_+\cap\Sph^3_-=\tilde{\Sph}.
$$
Note that $\rank df_2=\rank d(\Psi_1\circ f\circ\Phi^{-1})\leq 2$ a.e. by the chain rule.

Since $f_2:\Sph^4\to\Sph^3$ is homotopic to $f$, it is not homotopic to a constant map.
Now Lemma~\ref{susp2} and
\eqref{e1} yield that the mapping $f_2$ is homotopic to the suspension $Sh$ of
the map $h=f_2|_{\Sph}:\Sph\to\tilde{\Sph}$.
Since $f_2$ is not homotopic to a constant map, $Sh$ is not homotopic to a constant map either.
This and Lemma~\ref{susp3} imply that $h:\Sph\to\tilde{\Sph}$ is not homotopic to a constant map.

Since $h$ is a mapping form a $3$-sphere to a $2$-sphere that is not homotopic to a constant map, its Hopf invariant is non-zero (Lemma~\ref{l3}).
Also, $h$ is Lipschitz
and the Lipschitz extension $f_2$ of $h$ maps $\Sph^4_+$ to $\Sph^3_+$, thus it follows from Corollary~\ref{fromHST2} that $df_2$ has rank $3$
on a subset of $\Sph^4_+$ of positive measure, which is a contradiction.
This completes the proof of Theorem~\ref{main2} and hence that of Theorem~\ref{main}
when $n=2,3$.
\hfill $\Box$

Now it remains to prove Theorem~\ref{main3}.

\section{Proof of Theorem~\ref{main3}}
\label{sec5}
\begin{figure}[h!]
\begin{minipage}{0.6\textwidth}
Let $\lambda_{s,r}:\bbbr\to\bbbr$, for $0<s<r<1$, be a smooth, odd, and non-decreasing function such that
$\lambda_{s,r}(t)=1$ when $|t|>r$ and $\lambda(t)=t$ for $|t|<s$.

The smooth mapping $\Lambda:\bbbr^{k+1}\to\bbbr^{k+1}$
\begin{equation}
\label{lambda}
(x_1,x_2,\ldots,x_{k+1})\stackrel{\Lambda}{\longmapsto} (\lambda_{s,r}(x_1),\ldots,\lambda_{s,r}(x_{k+1}))
\end{equation}
maps $\bbbr^{k+1}$
onto the cube $[-1,1]^{k+1}$ in such a way that the interior neighborhood of the boundary $\partial [-1,1]^{k+1}$ is mapped onto the boundary,
and the complement $\bbbr^{k+1}\setminus [-1,1]^{k+1}$ is also smoothly mapped onto $\partial[-1,1]^{k+1}$.
Note, however, that $\Lambda|_{\partial [-1,1]^{k+1}} \ne \id$, and hence $\Lambda$ is not a retraction.
\end{minipage}
\begin{minipage}{0.37\textwidth}

\begin{tikzpicture}[scale=0.8]
\draw[->] (-3,0)--(3,0);
\draw[->] (0,-3)--(0,3);
\foreach \i in {-1,-0.6,-0.3,0.3,0.6,1}
{\draw ({2*\i},0.05)--({2*\i},-0.05);}
\draw (-0.05,2)--(0.05,2);
\draw (-0.05,-2)--(0.05,-2);
\draw (-3,-2) -- (-1.2,-2) to [out=0,in=-135] (-0.6,-0.6) -- (0.6,0.6) to [out=45, in=180] (1.2,2) -- (3,2);
\node at (2,-0.3) {$1$};
\node at (1.2,-0.3) {$r$};
\node at (0.6,-0.3) {$s$};
\node at (-2.1,-0.3) {$-1$};
\node[left] at (0,2) {$1$};
\node[left] at (0,-2) {$-1$};
\node at (-4,0) {};
\end{tikzpicture}

\caption{Function~$\lambda_{s,r}$.}

\end{minipage}

\end{figure}
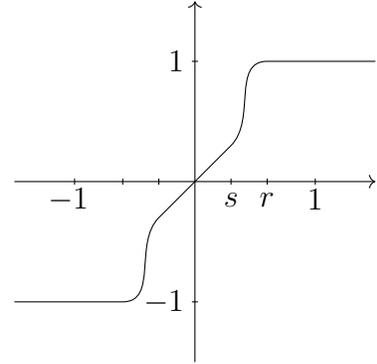
Obviously, $x\mapsto\frac{1}{2}\Lambda(2x)$ has the same properties,
with $[-1/2,1/2]^{k+1}$ in place of $[-1,1]^{k+1}$, and similarly we can rescale and shift this mapping to be used on any other cube in $\bbbr^{k+1}$.

Since $\pi_m(\Sph^k)\neq 0$, there is a $\phi\in C^\infty(\Sph^m,\partial[-\frac{1}{2},\frac{1}{2}]^{k+1})$
that represents a non-trivial element in
$\pi_m(\partial [-\frac{1}{2},\frac{1}{2}]^{k+1})=\pi_m(\Sph^{k})$.
Clearly, we can choose $\phi$ to be continuous, but smoothing $\phi$ and then composing it with a projection onto the boundary of the cube as described
above gives a mapping onto $\partial[-\frac{1}{2},\frac{1}{2}]^{k+1}$ that is $C^\infty$ smooth as a mapping into $\bbbr^{k+1}$.

We reduce the proof of Theorem~\ref{main3} to the following lemma.

\begin{lemma}
\label{enough}
There is a mapping $F\in C^1(\overbar{\bbbb}^{m+1}, [-\frac{1}{2},\frac{1}{2}]^{k+1})$
satisfying $\rank dF\leq k$ everywhere such that $F$ maps the boundary $\partial \bbbb^{m+1}=\Sph^m$ to $\partial [-\frac{1}{2},\frac{1}{2}]^{k+1}$ and
$F|_{\partial\bbbb^{m+1}}=\phi$.
\end{lemma}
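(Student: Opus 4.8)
The goal is to extend the $C^1$ map $\phi\colon\Sph^m\to\partial[-\tfrac12,\tfrac12]^{k+1}$ to a $C^1$ map $F$ on the closed ball $\overbar{\bbbb}^{m+1}$ that stays inside the cube, agrees with $\phi$ on the boundary, and has rank of the derivative at most $k$ everywhere. The natural first move is to fill the ball by radial cones: write points of $\overbar{\bbbb}^{m+1}$ as $t\theta$ with $t\in[0,1]$, $\theta\in\Sph^m$, and try $F(t\theta)=$ ``something like $t\,\phi(\theta)$''. The radial cone $t\mapsto t\phi(\theta)$ has image inside the cube (since $\phi(\theta)$ lies on the boundary and the cube is star-shaped about $0$), equals $\phi$ at $t=1$, and at each point its differential factors through the $m$-dimensional tangent space of the sphere together with the radial direction — so generically rank $m+1$, which is too big. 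The key idea to be borrowed from Wenger--Young is that the image of the cone lies in the $(k+1)$-dimensional cube but actually, because $\phi$ takes values in the $k$-dimensional boundary $\partial[-\tfrac12,\tfrac12]^{k+1}$, one can arrange the filling to factor through a $k$-dimensional complex away from the origin.

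**The main construction.** I would proceed as follows. Since $\phi(\Sph^m)\subset\partial[-\tfrac12,\tfrac12]^{k+1}$, decompose the boundary of the cube into its $k$ closed $k$-dimensional faces. Pull these back under $\phi$ to get a finite closed cover of $\Sph^m$; using a smooth partition of unity subordinate to a slight open thickening, or better, using the explicit projection $x\mapsto\tfrac12\Lambda(2x)$ onto $\partial[-\tfrac12,\tfrac12]^{k+1}$ composed with radial scaling, build $F$ face by face. Concretely, on the cone over the part of $\Sph^m$ mapping near a given face $\{x_i=\pm\tfrac12\}$, define $F(t\theta)$ to move, as $t$ decreases from $1$, by first sliding the $x_i$-coordinate of $\phi(\theta)$ from $\pm\tfrac12$ down toward $0$ while keeping the other $k$ coordinates equal to those of $\phi(\theta)$; this portion has differential of rank $\le k$ because the image lies in the $k$-plane $\{x_i=\text{const among the slice}\}$... more carefully: the map has the form $(t,\theta)\mapsto(\phi_1(\theta),\dots,\text{(scaled }\phi_i)\text{,}\dots,\phi_{k+1}(\theta))$, and since the $\phi_j$ for $j\neq i$ together already parametrize a $k$-dimensional face, adding the single extra scalar function of $(t,\theta)$ keeps the total rank at $k$ only if that scalar is a function of the other coordinates — so instead one makes $F(t\theta)=\tfrac12\Lambda\big(2t^{-1}\cdot(\text{something})\big)$-type formulas that literally factor as $(\text{cube face coordinate chart})\circ(\text{map to }\bbbr^k)$. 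The clean way: set $F(t\theta)=c(t)\,\big(\tfrac12\Lambda(2\psi(t,\theta))\big)$ where $\psi$ maps into the interior of a face and $c(t)$ is a cutoff — essentially, use the fact from Section~\ref{FST} that a smooth cone $\Phi_\eps(x)=\lambda_\eps(|x|)f(x/|x|)$ over a map $f\colon\Sph^m\to\Sph^k$ composed with a coordinate embedding $\Sph^k\hookrightarrow\partial[-\tfrac12,\tfrac12]^{k+1}\subset\bbbr^{k+1}$ has rank $\le k+1$, then kill one more dimension by noting the image of the cone lies in the $(k+1)$-dimensional cube but the cone of a $k$-sphere is a $(k+1)$-ball — so this still gives $k+1$, not $k$.

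**The genuine mechanism.** The actual trick of Wenger--Young, which I would adapt explicitly, exploits $m<2k-1$: the map $\phi\colon\Sph^m\to\Sph^k$ is (by Freudenthal, Lemma~\ref{freud}) homotopic to an $(\ell)$-fold suspension of a map from a lower sphere, hence — after a homotopy — factors up to homotopy through a wedge or a lower-dimensional skeleton. More usefully, since $m+1\le 2k-1$ is false in general but $m<2k-1$ holds, one can use that $\phi$ extends over the ball as a map into the $k$-skeleton of some cell structure: build a $C^1$ map $\overbar{\bbbb}^{m+1}\to K$ where $K$ is a $k$-dimensional polyhedron smoothly embedded (piecewise) in $\partial[-\tfrac12,\tfrac12]^{k+1}\subset\bbbr^{k+1}$ in such a way that a neighborhood deformation retracts the cube's boundary neighborhood onto $K\cup(\text{stuff})$. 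I would give the construction directly: triangulate, define $F$ on the cone to land in $\bigcup(\text{faces})=\partial[-\tfrac12,\tfrac12]^{k+1}$ by a formula that on each cone-cell is a composition $\overbar{\bbbb}^{m+1}\supset(\text{cell})\to\bbbr^k\xrightarrow{\text{smooth, rank}\le k}\partial[-\tfrac12,\tfrac12]^{k+1}$, then glue using a smooth partition of unity whose gradient terms are absorbed by the extra coordinate — here the rank stays $\le k$ because at each point all partials of all $k+1$ component functions lie in a common $k$-dimensional subspace of $\bbbr^{k+1}$, namely the tangent space to whichever face of the cube the image point sits on, or its translate.

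**Regularity and the main obstacle.** Finally, verify $F\in C^1$: the suspension-type cone formulas are smooth away from the origin; near the origin, use the $\lambda_\eps$ / $\lambda_{s,r}$ smoothing exactly as in Section~\ref{FST} (the paragraph around $\Phi_\eps$) so that $F$ is constant $\equiv 0$ — the center of the cube — on a small ball $\bbbb(0,\eps)$, killing the singularity and trivially keeping the rank low there; and check the pieces match to first order along cell boundaries, which is where the explicit choice of partition of unity and the flatness ($\lambda$ constant) near gluing loci is used. I expect the main obstacle to be precisely the simultaneous bookkeeping of (i) staying inside the cube, (ii) matching $\phi$ on $\Sph^m$, (iii) the universal rank-$\le k$ bound through the gluing region, and (iv) $C^1$ (not merely Lipschitz) regularity — since Wenger--Young only needed Lipschitz and a.e. rank bounds, whereas here every differential, including in the overlap regions where several face-maps and partition-of-unity bump functions interact, must honestly have image in a $k$-plane. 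The resolution is to make the construction ``rigid'' near all singular and gluing loci by forcing $F$ to be locally a composition with a fixed smooth rank-$\le k$ model map (a coordinate patch of a cube face) there, so that the rank bound is manifest and the $C^1$-matching reduces to matching the lower-dimensional factor maps.
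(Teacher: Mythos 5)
Your diagnosis of the difficulty is correct — a naive radial cone has rank $m+1$, and the way out must be to factor through a $k$-dimensional target — but the construction you sketch is not the one that works, and it has a gap at exactly the point you flag as ``the main obstacle.'' You propose a one-step filling: map the annulus $\overbar{\bbbb}^{m+1}\setminus\bbbb(0,\eps)$ into a $k$-complex and then make $F$ constant on $\bbbb(0,\eps)$ via the $\lambda_\eps$-smoothing. This never becomes a construction; you repeatedly acknowledge that your formulas give rank $k+1$, that the ``$k$-skeleton extension'' step is only asserted, and that the ``genuine mechanism'' remains to be found. More importantly, the single-scale picture cannot give a $C^1$ map with $\rank dF\leq k$ agreeing with a homotopically nontrivial $\phi$ on $\partial\bbbb^{m+1}$: there is no reason why $\phi$, representing a nonzero class in $\pi_m(\Sph^k)$, should admit a rank-$\leq k$ null-cobounding over a single annulus, and you offer no argument that it does.

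The missing idea is that the construction in the paper is \emph{self-similar and infinitely iterated}, not a cone. One removes $N=n^{k+1}$ disjoint balls $\bbbb_i$ of radius $2/n$ from $\overbar{\bbbb}^{m+1}$ and, on the resulting Swiss-cheese domain $\overbar\Omega_o$, builds a smooth map into the $k$-skeleton $S$ of a subdivision of the cube into $N$ subcubes $J_i$; the image is literally $k$-dimensional, so $\rank\leq k$ is automatic. The map restricted to each boundary sphere $\partial\bbbb_i$ is, up to a similarity, the original $\phi$ mapping onto $\partial J_i$. Crucially, the dimension drop from $m+1$ to $k+1$ before the skeleton projection is achieved by the radial extension $H_2$ of the smoothed suspension $S_\eps h$ of a map $h\in C^\infty(\Sph^{m-1},\Sph^{k-1})$ obtained from $\phi$ via Freudenthal (this is where $k+1\leq m<2k-1$ enters); you mention Freudenthal and suspensions but never use them in this structural way. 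Then one \emph{recurses}: each hole $\bbbb_i$ receives a rescaled copy of the same construction mapping into $J_i$, and so on, leaving a Cantor set $C$ in the limit. The decisive quantitative fact is the ratio of scales — holes of radius $2/n$ in the source, image cubes of side $1/n$ in the target — so that $\|dF_\ell|_{\Omega_\ell}\|_\infty=2^{-\ell}\|dF_o|_{\Omega_o}\|_\infty\to 0$, whence $F$ extends to a $C^1$ map on all of $\overbar{\bbbb}^{m+1}$ with $dF\equiv 0$ on $C$, and the $C^1$ matching across each $\partial\bbbb_i$ is arranged by making the map constant in the normal direction near each boundary sphere. Without this geometric decay through infinitely many scales there is no mechanism to achieve $C^1$-regularity while maintaining $\rank\leq k$, and your proposal does not contain it.
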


Before we prove the lemma we show how Theorem~\ref{main3} follows from it.
Once we have a mapping $F$ as above, we glue two copies of this mapping along the common boundary $\partial \bbbb^{m+1}=\Sph^m$.
We obtain a mapping into two copies of $[-\frac{1}{2},\frac{1}{2}]^{k+1}$ glued along the common boundary
$\partial [-\frac{1}{2},\frac{1}{2}]^{k+1}\approx\Sph^k$, so we essentially obtain a mapping into $\Sph^{k+1}$.

To do it in a smooth way and to have $\Sph^{k+1}$ as the target, let $\xi:[0,\infty)\to [0,\infty)$ be a smooth function satisfying $\xi(t)\leq 1/t$ for all $t>0$, $\xi(t)=1$ for
$t\in[0,1/8]$ and $\xi(t)=1/t$ for $t\geq 1/4$.
Then $\Xi^\ell:\bbbr^{\ell}\to\bbbr^{\ell}$, $\Xi^\ell(x)=\xi(|x|) x$,
is smooth and maps $\bbbr^{\ell}$ to $\overbar{\bbbb}^{\ell}$ and maps $\partial[-\frac{1}{2},\frac{1}{2}]^{\ell}$
onto $\partial\bbbb^{\ell}$.

Since the composition does not increase the rank of the derivative, the derivative of the function
$\tilde{f}=\Xi^{k+1}\circ F\circ\Xi^{m+1}:\overbar{\bbbb}^{m+1}\to\overbar{\bbbb}^{k+1}$
has  rank at most $k$ everywhere and
it is constant along radii near $\partial \bbbb^{m+1}$,
which is guaranteed by $\Xi^{m+1}$. Thus the radial derivative of $\tilde{f}$ vanishes at $\partial \bbbb^{m+1}$,
and $\tilde{f}$ maps $\partial\bbbb^{m+1}=\Sph^m$ onto $\partial\bbbb^{k+1}=\Sph^{k}$.
Let $\Phi_{\pm}:\overbar{\bbbb}^{k+1}\to\Sph^{k+1}_{\pm}$ be diffeomorphisms of
$\overbar{\bbbb}^{k+1}$ onto the closed upper and lower hemispheres that are smooth
up to the boundary and equal to the identity on $\partial\bbbb^{k+1}$.
Then we smoothly glue two copies of $\tilde{f}$, defining  $f:\Sph^{m+1}\to\Sph^{k+1}$ by the formula
$$
f(x_1,\ldots,x_{m+1},x_{m+2})=
\begin{cases}
\Phi_{+}\circ\tilde{f}(x_1,\ldots,x_{m+1})&\text{if $x_{m+2}\geq 0$},\\
\Phi_{-}\circ\tilde{f}(x_1,\ldots,x_{m+1})&\text{if $x_{m+2}\leq 0$}.
\end{cases}
$$
The mapping $f|_{\Sph^m}:\Sph^m\to\Sph^k$, where $\Sph^m\subset\Sph^{m+1}$ and $\Sph^k\subset\Sph^{k+1}$ are equators,
is not homotopic to a constant map, because the mapping $\phi$ is not homotopic to a constant map.
The mapping $f:\Sph^{m+1}\to\Sph^{k+1}$ is homotopic to the suspension of $f|_{\Sph^m}$
(Lemma~\ref{susp2}) and since $m<2k-1$, it is not homotopic to the constant map
(Lemma~\ref{freud}). This completes the proof of Theorem~\ref{main3} and it remains to prove Lemma~\ref{enough}.

\begin{proof}[Proof of Lemma~\ref{enough}]
In what follows, we denote by $\bbbb^\ell$ the unit ball in $\bbbr^\ell$ centered at the origin. We also denote by $\sigma B$ a ball concentric with
$B$ and with radius $\sigma>0$ times that of $B$.

We shall repeatedly use the following geometric facts:
\begin{lemma}
\label{lem:ma}
Let $B_1,\ldots,B_j\subset \bbbb^\ell$ and
$\tB_1, \tB_2,\ldots,\tB_j\subset \bbbb^\ell$ be two families of pairwise
disjoint, closed balls. Then there exists a smooth diffeomorphism
$\Psi:\overbar{\bbbb}^\ell\to\overbar{\bbbb}^\ell$, $\Psi|_{\partial\bbbb^\ell}=\id$,
which maps $B_i$ to $\tB_i$ for $i=1,2,\ldots, j$ in such a way that $\Psi|_{B_i}$ is a translation and scaling.
\end{lemma}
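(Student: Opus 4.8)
The plan is to obtain $\Psi$ as the time-one map of the flow of a compactly supported, time-dependent vector field on $\bbbb^\ell$. Write $B_i=B(a_i,r_i)$ and $\tB_i=B(\tilde a_i,\tilde r_i)$; since these are closed balls inside the open ball $\bbbb^\ell$, within each family they lie at positive pairwise distance and at positive distance from $\partial\bbbb^\ell$. I would first join the two configurations by a smooth path. Choose a smooth path $t\mapsto(c_1(t),\dots,c_j(t))$, $t\in[1/3,2/3]$, in the configuration space $\{(p_1,\dots,p_j)\in(\bbbb^\ell)^j:\ p_i\neq p_{i'}\ \text{whenever}\ i\neq i'\}$ from $(a_1,\dots,a_j)$ to $(\tilde a_1,\dots,\tilde a_j)$, flat to all orders at the two endpoints; such a path exists because this configuration space is path-connected for $\ell\geq2$, which is the only case we need. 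Let $\delta>0$ be a lower bound along this compact path for the numbers $|c_i(t)-c_{i'}(t)|$ ($i\neq i'$) and $1-|c_i(t)|$, and pick $\rho_0\in(0,\delta/2)$ also smaller than every $r_i$ and every $\tilde r_i$. Then define a smooth family $t\mapsto B_i(t)=B(a_i(t),r_i(t))$, $t\in[0,1]$, of $j$-tuples of pairwise disjoint closed balls contained in $\bbbb^\ell$: on $[0,1/3]$ set $a_i(t)\equiv a_i$ and let $r_i(t)$ decrease smoothly from $r_i$ to $\rho_0$; on $[1/3,2/3]$ set $r_i(t)\equiv\rho_0$ and $a_i(t)=c_i(t)$; on $[2/3,1]$ set $a_i(t)\equiv\tilde a_i$ and let $r_i(t)$ increase smoothly from $\rho_0$ to $\tilde r_i$, choosing $r_i$ flat to all orders at $t=1/3$ and $t=2/3$ so that the concatenation is $C^\infty$. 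Pairwise disjointness and $B_i(t)\subset\bbbb^\ell$ hold at every time by the choice of $\rho_0$, and $B_i(0)=B_i$, $B_i(1)=\tB_i$.

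Next I would build the vector field. Near $B_i(t)$ the motion $s\mapsto B_i(s)$ is generated by the affine field $X_i(t,x)=\dot a_i(t)+\frac{\dot r_i(t)}{r_i(t)}\bigl(x-a_i(t)\bigr)$: one checks directly that the solution of $\dot\gamma=X_i(\cdot,\gamma)$ with $\gamma(0)=x$ is $\gamma(t)=a_i(t)+\frac{r_i(t)}{r_i(0)}\bigl(x-a_i(0)\bigr)$, which is a translation composed with a homothety and carries $B_i(0)$ onto $B_i(t)$. By compactness there is $\eta>0$ such that the $\eta$-neighborhoods $U_i(t)$ of the balls $B_i(t)$ are pairwise disjoint and compactly contained in $\bbbb^\ell$ for every $t$. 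Take cut-offs $\chi_i(t,x)=\psi\bigl(|x-a_i(t)|^2-r_i(t)^2\bigr)$, where $\psi$ is a fixed smooth function equal to $1$ on $(-\infty,0]$ and vanishing outside a sufficiently small neighborhood of $0$, so that $\chi_i(t,\cdot)\equiv1$ on a neighborhood of $B_i(t)$ and $\supp\chi_i(t,\cdot)\subset U_i(t)$, depending smoothly on $(t,x)$. Put $X_t=\sum_{i=1}^j\chi_i(t,\cdot)\,X_i(t,\cdot)$; this is smooth on $\overbar{\bbbb}^\ell\times[0,1]$ and compactly supported in $\bbbb^\ell$, hence its flow $\Psi_t$ from time $0$ to time $t$ is, for each $t$, a smooth diffeomorphism of $\overbar{\bbbb}^\ell$ fixing a neighborhood of $\partial\bbbb^\ell$ pointwise. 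Finally $\Psi:=\Psi_1$ is the required map: the $X$-trajectory of a point $x\in B_i(0)$ remains in $B_i(s)\subset U_i(s)$, where $X_s$ coincides with $X_i(s,\cdot)$ and all other cut-offs vanish, so uniqueness of solutions gives $\Psi_t|_{B_i(0)}(x)=a_i(t)+\frac{r_i(t)}{r_i(0)}(x-a_i(0))$; at $t=1$ this is a translation and scaling carrying $B_i=B_i(0)$ onto $\tB_i=B_i(1)$, while $\Psi|_{\partial\bbbb^\ell}=\id$ by the choice of support.

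The step that requires care --- essentially the only one beyond routine bookkeeping --- is arranging $X_t$ so that its flow is at once a global diffeomorphism of $\overbar{\bbbb}^\ell$ fixing the boundary and exactly an affine map on each $B_i(t)$; this is precisely why one cannot simply interpolate the target affine maps by a partition of unity (a convex combination of diffeomorphisms need not be a diffeomorphism) and must instead run the argument through a flow. The remaining ingredient, path-connectedness of the configuration space of $j$ points in $\bbbb^\ell$ for $\ell\geq2$, is standard.
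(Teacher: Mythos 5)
Your proof is correct. The paper states this lemma as a known geometric fact and offers no proof at all, so there is nothing to compare against; your argument --- shrink the balls to a common small radius, transport the centers along a smooth path in the configuration space of $j$ distinct points of $\bbbb^\ell$, re-inflate to the target radii, and realize this ambient isotopy as the time-one map of a compactly supported time-dependent vector field that is affine near each moving ball --- is the standard isotopy-extension way to prove it, and the essential verifications are all present: the moving balls stay pairwise disjoint and compactly inside $\bbbb^\ell$ by the choice of $\rho_0$ and $\eta$; the affine field $X_i$ integrates exactly to the similarity $x\mapsto a_i(t)+\frac{r_i(t)}{r_i(0)}(x-a_i(0))$; and uniqueness of ODE solutions forces the trajectory of a point of $B_i(0)$ to stay in $B_i(t)$, where the full field coincides with $X_i$, so that $\Psi|_{B_i}$ is indeed a translation and scaling while $\Psi$ is the identity near $\partial\bbbb^\ell$. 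Two small remarks. First, the description of the cut-off profile is garbled: a function equal to $1$ on $(-\infty,0]$ cannot vanish outside a small neighborhood of $0$; you mean $\psi=1$ on $(-\infty,0]$ and $\psi=0$ on $[c,\infty)$ for a suitably small $c>0$, which is what the rest of the construction actually uses. Second, your restriction to $\ell\ge 2$ is not just a convenience but a necessity: for $\ell=1$ a diffeomorphism of $[-1,1]$ fixing the endpoints is monotone increasing and hence preserves the order of disjoint subintervals, so the statement as written fails when the $\tB_i$ occur in a different order than the $B_i$; since the paper only invokes the lemma with $\ell=m+1$ and $\ell=k+1$, both at least $4$ under the hypotheses of Theorem 1.5, this does not affect anything.
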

Consider the
cubical ($k+1$)-dimensional complex obtained by partitioning the unit cube $[-\frac{1}{2},\frac{1}{2}]^{k+1}$
into $n^{k+1}$ equal cubes of edge-length $1/n$; denote these cubes by $J_i$, $i=1,2,\ldots,N=n^{k+1}$ and by $S=\bigcup_{i=1}^N\partial J_i$ the $k$-skeleton of the complex.

\begin{lemma}
\label{lem:em}
There exists a smooth mapping $R:\bbbr^{k+1}\to \bbbr^{k+1}$ with the following
properties:
\begin{itemize}
\item $R$ maps a neighborhood of $S$ to $S$:
\begin{itemize}
\item for each cube $J_i$, if $B_i$ is the $(k+1)$-dimensional ball inscribed into $J_i$, then $J_i\setminus \frac{1}{2}B_i$ is mapped onto $\partial J_i$,
\item  $R$ projects  $\bbbr^{k+1}\setminus [-\frac{1}{2},\frac{1}{2}]^{k+1}$ onto $\partial [-\frac{1}{2},\frac{1}{2}]^{k+1}$,
\end{itemize}
\item $R$ is the same, up to translation, in each of the cubes $J_i$,
\item $R$ is homotopic to identity on $\partial J_i$ for each $i$ and on $\partial [-\frac{1}{2},\frac{1}{2}]^{k+1}$.
\end{itemize}
\end{lemma}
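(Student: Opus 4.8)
The plan is to produce $R$ by an explicit coordinate-wise formula built from the one-variable function $\lambda_{s,r}$ introduced at the beginning of this section, rescaled to the mesh size $1/n$, spliced together periodically over the subcubes, and extended by the constants $\pm\frac12$ outside $[-\frac12,\frac12]^{k+1}$. Concretely, I would fix $0<s<r<\frac{1}{2\sqrt{k+1}}$, put $\mu(t)=\frac{1}{2n}\lambda_{s,r}(2nt)$, so that $\mu\colon\bbbr\to\bbbr$ is smooth, odd and non-decreasing with $\mu(t)=\pm\frac{1}{2n}$ for $\pm t\ge\frac{r}{2n}$ and $\mu=\id$ on $[-\frac{s}{2n},\frac{s}{2n}]$; writing $a_1<\dots<a_n$ for the centres of the $n$ subintervals of length $1/n$ partitioning $[-\frac12,\frac12]$, I would set $\Theta(t)=a_j+\mu(t-a_j)$ for $|t-a_j|\le\frac{1}{2n}$ and $\Theta(t)=\pm\frac12$ for $\pm t\ge\frac12$. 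Since $\mu$ is already constant (equal to $\pm\frac{1}{2n}$) near $\pm\frac{1}{2n}$, these formulas agree to infinite order at the junction points, so $\Theta\in C^\infty(\bbbr)$, and $\Theta$ maps $[-\frac12,\frac12]$ onto itself; finally $R(x_1,\dots,x_{k+1}):=(\Theta(x_1),\dots,\Theta(x_{k+1}))$ is a smooth map $\bbbr^{k+1}\to[-\frac12,\frac12]^{k+1}\subset\bbbr^{k+1}$. On a subcube $J_i$ with centre $c_i$ this $R$ equals $x\mapsto c_i+\Lambda_0(x-c_i)$, where $\Lambda_0(y)=(\mu(y_1),\dots,\mu(y_{k+1}))$ is the rescaling of the map $\Lambda$ of \eqref{lambda} and is independent of $i$; this is the ``same up to translation'' requirement, and it shows $R(J_i)=J_i$. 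Outside $[-\frac12,\frac12]^{k+1}$ some coordinate equals $\pm\frac12$, so $R$ there maps, and --- since $\Theta$ is onto $[-\frac12,\frac12]$ --- surjects, onto $\partial[-\frac12,\frac12]^{k+1}$.

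The core of the argument is the behaviour near the skeleton $S$. Working in local coordinates $y=x-c_i$ on a fixed $J_i$, the only region where $\Lambda_0$ is not already forced into $\partial[-\frac{1}{2n},\frac{1}{2n}]^{k+1}$ is the small central cube $C=[-\frac{r}{2n},\frac{r}{2n}]^{k+1}$, since off $C$ some $|y_j|>\frac{r}{2n}$, whence $\mu(y_j)=\pm\frac{1}{2n}$. Here is where the numerical choice of $r$ enters: the circumradius $\frac{\sqrt{k+1}\,r}{2n}$ of $C$ is strictly less than $\frac{1}{4n}$, the radius of $\frac12 B_i$, so that $C\subset\frac12 B_i$ and therefore $R$ maps $J_i\setminus\frac12 B_i$ into $\partial J_i$. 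It maps this set \emph{onto} $\partial J_i$ because $\partial J_i\subset J_i\setminus\frac12 B_i$ and $\Lambda_0$ carries $\partial[-\frac{1}{2n},\frac{1}{2n}]^{k+1}$ onto itself: given a point on a face, one realises the coordinate cutting out that face as $\pm\frac{1}{2n}$ and solves for the remaining coordinates within $[-\frac{r}{2n},\frac{r}{2n}]$, using that $\mu$ maps this interval onto $[-\frac{1}{2n},\frac{1}{2n}]$. Combined with the previous paragraph, this yields the two bullet points asserting that $R$ maps a neighbourhood of $S$ onto $S$.

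It remains to check that $R$ is homotopic to the identity on each $\partial J_i$ and on $\partial[-\frac12,\frac12]^{k+1}$, and for this I would use the affine interpolations $\mu_t:=(1-t)\mu+t\,\id$ and $\Theta_t:=(1-t)\Theta+t\,\id$. Each $\mu_t$ is non-decreasing, fixes $\pm\frac{1}{2n}$ and maps $[-\frac{1}{2n},\frac{1}{2n}]$ into itself, so the coordinate-wise family it generates keeps $\partial J_i$ inside $\partial J_i$ and joins $R|_{\partial J_i}$ to the identity, and the same computation with $\Theta_t$ handles $\partial[-\frac12,\frac12]^{k+1}$. I expect the only genuinely delicate point to be the quantitative one just above --- fitting the uncollapsed core cube $C$ inside the round half-ball $\frac12 B_i$, which is exactly why the ``slope zone'' $(-r,r)$ of $\lambda_{s,r}$ must be shrunk by a factor comparable to $1/\sqrt{k+1}$; granting that, the smoothness, the translation structure, the treatment of the exterior, and all three homotopies follow immediately from $\lambda_{s,r}$ being locally constant outside $(-r,r)$.
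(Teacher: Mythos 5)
Your construction is correct and is essentially the paper's own map written as an explicit coordinate-wise formula: the periodic $\Theta$ on each $1/n$-subinterval reproduces the rescaled-and-translated $\Lambda$ on every $J_i$, and extending $\Theta$ by constants outside $[-\frac12,\frac12]$ gives exactly what the paper obtains by precomposing with the nearest-point projection onto the cube. The coordinate-wise formula makes the smoothness at corners and across $\partial[-\frac12,\frac12]^{k+1}$ automatic, and your affine interpolations $\mu_t,\Theta_t$ supply the homotopies that the paper simply declares obvious.
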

\begin{proof}
Observe that the unit ball $\bbbb^{k+1}$ is inscribed in the cube $[-1,1]^{k+1}$.
The mapping $\Lambda$ defined in \eqref{lambda} maps the cube $[-1,1]^{k+1}$
onto itself in such a way that the interior neighborhood of the boundary $\partial [-1,1]^{k+1}$ is mapped onto the boundary.
If we choose $s=\frac{1}{4\sqrt{k+1}}$, $r=2s$, then everything in $[-1,1]^{k+1}$ lying outside the cube $[-\frac{1}{2\sqrt{k+1}},\frac{1}{2\sqrt{k+1}}]^{k+1}$,
in particular $[-1,1]^{k+1}\setminus \frac{1}{2}\bbbb^{k+1}$, is mapped onto $\partial [-1,1]^{k+1}$.

The mapping is obviously smooth and homotopic to identity on $\partial [-1,1]^{k+1}$.

We can use $\Lambda$ (rescaled and translated) to project an interior neighborhood of the boundary of each
$J_i$ onto that boundary of $J_i$. The resulting mapping
is of class $C^\infty$ on the whole cube $[-\frac{1}{2},\frac{1}{2}]^{k+1}$.
Even the corners of the cubes $J_i$ do not cause any problems, because the entire neighborhood of
each of the corners is mapped into the corner and the mapping is $C^\infty$ there as it is a constant one.
This way $R$ is defined already on $[-\frac{1}{2},\frac{1}{2}]^{k+1}$.
Finally, for $x$ outside
$[-\frac{1}{2},\frac{1}{2}]^{k+1}$ we have well defined nearest point projection
$\pi:\bbbr^{k+1}\setminus [-\frac{1}{2},\frac{1}{2}]^{k+1}\to \partial [-\frac{1}{2},\frac{1}{2}]^{k+1}$; we take $R(x)=R(\pi(x))$.
Although, $\pi$ is not smooth, it is easy to see that the mapping $R$ is smooth, because in the normal directions
to faces of any dimension, the mapping $R\circ\pi$ is constant in a neighborhood of the point on the edge where we take the normal direction.
\end{proof}

We concentrate now on the construction of $F$.
We begin by choosing $N=n^{k+1}$ disjoint closed balls $\bbbb_i$, of radius $\frac{2}{n}$, all inside $\frac{1}{2}\bbbb^{m+1}$ (see Figure \ref{fig:1}). This is possible if we choose $n$ large enough. Indeed, the ball $\frac{1}{2}\overbar{\bbbb}^{m+1}$ contains a cube of edge length $\frac{1}{\sqrt{m+1}}$, and in it one can fit at least
$\left[\frac{n}{5\sqrt{m+1}}\right]^{m+1}$ cubes of edge length $\frac{5}{n}$ with pairwise disjoint interiors, where $[t]$ is the integer part of $t$.
For $n>(10\sqrt{m+1})^{m+1}$, we have
$$
\left[\frac{n}{5\sqrt{m+1}}\right]^{m+1}> \left(\frac{n}{10\sqrt{m+1}}\right)^{m+1}=n^m \frac{n}{(10\sqrt{m+1})^{m+1}}> n^{k+1}.
$$

\begin{figure}[h!]
\setlength{\currentparindent}{\parindent}
\begin{minipage}{0.6\textwidth}
\setlength{\parskip}{\bigskipamount}
\setlength{\parindent}{12pt}
\bigskip

Finally, in the interior of each of these cubes one can find a closed ball
$\overbar{\bbbb}_i$, of radius $\frac{2}{n}$,
concentric with the cube. Since the balls do not touch the boundaries of the cubes, they are pairwise disjoint.

Let $\overbar{\Omega}_o=\overbar{\bbbb}^{m+1}\setminus \bigcup_{i=1}^N \bbbb_i$. This will be the domain of the initial step of our construction, which will be then iterated inside each of the balls $\bbbb_i$. Note that
$\overbar{\Omega}_0$ is an $(m+1)$-manifold with $N+1$ boundary components, all of which are $m$-spheres.

\end{minipage}
\begin{minipage}{0.39\textwidth}
\begin{tikzpicture}[scale=0.5]
\filldraw[fill=lightgray!20!white] (6,0) circle (4);
\draw[darkgray] (2,0) arc [x radius=4, y radius =2, start angle =-180, end angle=0];
\filldraw[draw=gray,fill=lightgray!40!white] (6,0) circle (2);

\filldraw[ball color=white] (5.2,0.8) circle (0.5);
\filldraw[fill=white,opacity=0.5] (5.2,0.8) circle (0.5);
\draw (5.2,0.8) circle (0.5);
\draw[darkgray] (4.7,0.8) arc [x radius=0.5, y radius =0.25, start angle =-180, end angle=0];
\filldraw[ball color=white] (6.8,0.8) circle (0.5);
\filldraw[fill=white,opacity=0.5] (6.8,0.8) circle (0.5);
\draw (6.8,0.8) circle (0.5);
\draw[darkgray] (6.3,0.8) arc [x radius=0.5, y radius =0.25, start angle =-180, end angle=0];
\filldraw[ball color=white] (5.2,-0.8) circle (0.5);
\filldraw[fill=white,opacity=0.5] (5.2,-0.8) circle (0.5);
\draw (5.2,-0.8) circle (0.5);

\draw[darkgray] (4.7,-0.8) arc [x radius=0.5, y radius =0.25, start angle =-180, end angle=0];
\filldraw[ball color=white] (6.8,-0.8) circle (0.5);
\filldraw[fill=white,opacity=0.5] (6.8,-0.8) circle (0.5);
\draw (6.8,-0.8) circle (0.5);
\draw[darkgray] (6.3,-0.8) arc [x radius=0.5, y radius =0.25, start angle =-180, end angle=0];
\draw[lightgray!50!gray] (4,0) arc [x radius=2, y radius =1, start angle =-180, end angle=0];
\node at (0,0) {};
\end{tikzpicture}
\captionsetup{width=.8\linewidth}
\caption{Disjoint~balls~$\overbar{\bbbb}_i$ of radius $\frac{2}{n}$ lie inside~$\frac{1}{2}\bbbb^{m+1}$.}
\label{fig:1}
\end{minipage}

\end{figure}

Using Lemma~\ref{lem:ma}, we find a diffeomorphism $G_1:\overbar{\bbbb}^{m+1}\to\overbar{\bbbb}^{m+1}$ such that
\begin{itemize}
\item $G_1$ is identity on $\partial \bbbb^{m+1}$,
\item it maps the balls $\overbar{\bbbb}_i$ into $N$ identical
closed balls $\hat{K}_i$, arranged along the $x_{m+1}$ axis, mapping these balls by a translation and scaling,
see Figure \ref{fig:2}.
\end{itemize}

Since $n$ is large, the balls $\hat{K}_i$ might have to be smaller than the balls $\overbar{\bbbb}_i$.

\begin{figure}[ht]
\begin{tikzpicture}[scale=0.9]
\begin{scope}[scale=0.6]
\node[scale=1.2] at (9.5,-3.8) {$\bbbb^{m+1}$};
\filldraw[fill=lightgray!20!white] (6,0) circle (4);
\draw[darkgray] (2,0) arc [x radius=4, y radius =2, start angle =-180, end angle=0];
\filldraw[draw=gray,fill=lightgray!40!white] (6,0) circle (2);

\filldraw[ball color=white] (5.2,0.8) circle (0.5);
\filldraw[fill=white,opacity=0.5] (5.2,0.8) circle (0.5);
\draw (5.2,0.8) circle (0.5);
\draw[darkgray] (4.7,0.8) arc [x radius=0.5, y radius =0.25, start angle =-180, end angle=0];
\filldraw[ball color=white] (6.8,0.8) circle (0.5);
\filldraw[fill=white,opacity=0.5] (6.8,0.8) circle (0.5);
\draw (6.8,0.8) circle (0.5);
\draw[darkgray] (6.3,0.8) arc [x radius=0.5, y radius =0.25, start angle =-180, end angle=0];
\filldraw[ball color=white] (5.2,-0.8) circle (0.5);
\filldraw[fill=white,opacity=0.5] (5.2,-0.8) circle (0.5);
\draw (5.2,-0.8) circle (0.5);

\draw[darkgray] (4.7,-0.8) arc [x radius=0.5, y radius =0.25, start angle =-180, end angle=0];
\filldraw[ball color=white] (6.8,-0.8) circle (0.5);
\filldraw[fill=white,opacity=0.5] (6.8,-0.8) circle (0.5);
\draw (6.8,-0.8) circle (0.5);
\draw[darkgray] (6.3,-0.8) arc [x radius=0.5, y radius =0.25, start angle =-180, end angle=0];

\draw[lightgray!50!gray] (4,0) arc [x radius=2, y radius =1, start angle =-180, end angle=0];
\end{scope}
\begin{scope}[shift={(8,0)},scale=0.6]
\node[scale=1.2] at (9.5,-3.8) {$\bbbb^{m+1}$};
\filldraw[fill=lightgray!20!white] (6,0) circle (4);

\draw[dashed,thick] (6,5)--(6,3.6);
\draw[dashed, gray] (6,3.6)--(6,-4);
\draw[dashed,thick] (6,-4)--(6,-5);
\filldraw (6,3.7) circle (0.05);
\filldraw[lightgray] (6,-3.8) circle (0.05);
\foreach \i in {0,1,2,3}
{
\begin{scope}[shift={(0,-1.5*\i)}]
\filldraw[ball color=white] (6,2.3) circle (0.5);
\filldraw[fill=white,opacity=0.5] (6,2.3) circle (0.5);
\draw (6,2.3) circle (0.5);
\draw[gray] (5.5,2.3) arc [x radius=0.5, y radius =0.25, start angle =-180, end angle=0];
\end{scope}
}

\draw[darkgray] (2,0) arc [x radius=4, y radius =2, start angle =-180, end angle=0];
\end{scope}
\draw[thick,->] (6.5,0)--(8.5,0);
\node[above] at (7.5,0) {$G_1$};

\draw[->] (4.1,0.6) to [out=30, in =150] (11.6,1.5);
\end{tikzpicture}
\caption{The diffeomorphism $G_1$ rearranges the balls $\overbar{\bbbb}_i$, possibly shrinking them,
so that their centers lie on the $x_{m+1}$ axis. It maps $\overbar{\bbbb}_i$ to $\hat{K}_i$ by a similarity (scaling+translation) transformation.}
\label{fig:2}
\end{figure}
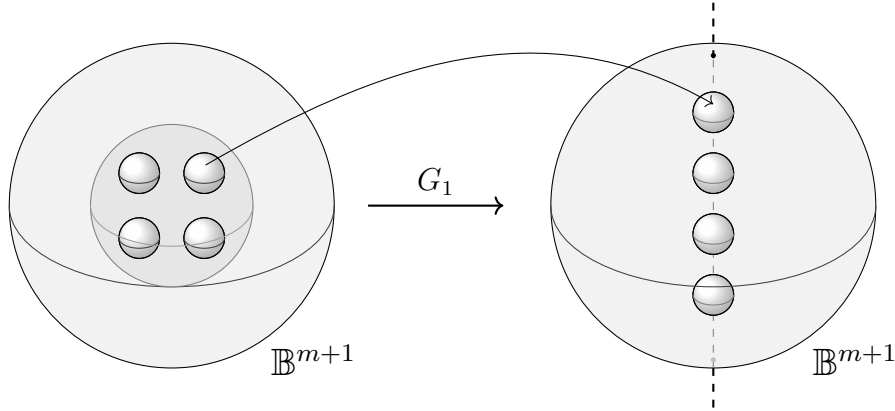

For the next step in the construction we will need the following lemma.
\begin{lemma}
\label{joasia}
Let $\phi\in C^\infty(\Sph^m,\partial[-\frac{1}{2},\frac{1}{2}]^{k+1})$ be as in Lemma~\ref{enough}. Then there is a smooth map
$h\in C^\infty(\Sph^{m-1},\Sph^{k-1})$, not homotopic to a constant map, such that for any map
$P\colon\Sph^k\to\partial[-\frac{1}{2},\frac{1}{2}]^{k+1}$ that is homotopic to the radial projection
$\pi\colon\Sph^k\to\partial[-\frac{1}{2},\frac{1}{2}]^{k+1}$, the map
$P\circ Sh:\Sph^m\to\partial[-\frac{1}{2},\frac{1}{2}]^{k+1}$ is homotopic to $\phi$.
\end{lemma}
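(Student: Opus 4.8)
The plan is to construct $h$ as a "desuspension" of $\phi$ obtained via the Freudenthal theorem, and then verify the compatibility with radial projections by a diagram-chasing argument on homotopy classes. First I would identify $\partial[-\frac12,\frac12]^{k+1}$ with $\Sph^k$ via the radial projection $\pi\colon\Sph^k\to\partial[-\frac12,\frac12]^{k+1}$, which is a diffeomorphism; under this identification $\phi$ represents a class in $\pi_m(\Sph^k)$. Since $m<2k-1$, the Freudenthal suspension theorem (Lemma~\ref{freud}) tells us that the suspension homomorphism $\pi_{m-1}(\Sph^{k-1})\to\pi_m(\Sph^k)$ is an isomorphism, so there is a class in $\pi_{m-1}(\Sph^{k-1})$, represented by a smooth map $h\colon\Sph^{m-1}\to\Sph^{k-1}$, whose suspension $Sh\colon\Sph^m\to\Sph^k$ is homotopic to $\pi^{-1}\circ\phi$. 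Because $\phi$ is not homotopic to a constant map, neither is $\pi^{-1}\circ\phi$, and hence — again using the isomorphism in Lemma~\ref{freud}, or directly Lemma~\ref{susp3} — the map $h$ is not homotopic to a constant map. As a matter of smoothness, I would take $h$ to be a smooth representative (any continuous map between spheres is homotopic to a smooth one), and replace $Sh$ by the smoothed suspension $S_\eps h$ from Section~\ref{FST} if a smooth model is convenient; this does not affect homotopy classes.

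Next I would handle the role of $P$. By hypothesis $P\colon\Sph^k\to\partial[-\frac12,\frac12]^{k+1}$ is homotopic to $\pi$. Composition with a fixed map is well defined on homotopy classes, so $P\circ Sh$ is homotopic to $\pi\circ Sh$. Combining this with $Sh\simeq\pi^{-1}\circ\phi$ gives
\[
P\circ Sh \;\simeq\; \pi\circ Sh \;\simeq\; \pi\circ(\pi^{-1}\circ\phi)\;=\;\phi,
\]
which is exactly the claimed conclusion. Thus once $h$ is produced, the verification is a one-line composition of homotopies.

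The one genuinely substantive point — and the only real obstacle — is the existence of the desuspension $h$, i.e.\ the surjectivity (indeed bijectivity) of the suspension homomorphism in the range $m<2k-1$. This is precisely the content of the Freudenthal suspension theorem as recorded in Lemma~\ref{freud}: every map $\Sph^m\to\Sph^k$ is homotopic to a suspension when $m\le 2k-1$, and in the strict range $m<2k-1$ the suspension is null-homotopic if and only if the desuspended map is. Applying the first assertion to $\pi^{-1}\circ\phi$ produces a map $g\colon\Sph^{m-1}\to\Sph^{k-1}$ with $Sg\simeq\pi^{-1}\circ\phi$; applying the second assertion shows $g$ is not null-homotopic since $\pi^{-1}\circ\phi$ is not. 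We set $h$ equal to a smooth representative of $g$. Everything else is routine: $\pi$ is a diffeomorphism onto the cube boundary, composition respects homotopy, and smoothing is standard. I would therefore present the proof essentially as the three displayed steps above, citing Lemma~\ref{freud} for the desuspension and Lemmas~\ref{susp1}–\ref{susp3} for the elementary homotopy bookkeeping.
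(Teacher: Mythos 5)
Your proof is correct and follows essentially the same route as the paper: identify $\partial[-\frac12,\frac12]^{k+1}$ with $\Sph^k$ via the diffeomorphism $\pi$, desuspend $\tilde{\phi}=\pi^{-1}\circ\phi$ using Freudenthal (Lemma~\ref{freud}), conclude nontriviality of $h$ via Lemma~\ref{susp3}, and finish with the one-line composition $P\circ Sh\simeq\pi\circ Sh\simeq\phi$. One small caution: in the hypothesis range $k+1\le m<2k-1$, the relevant suspension $\pi_{m-1}(\Sph^{k-1})\to\pi_m(\Sph^k)$ is only guaranteed to be an \emph{epimorphism} (the isomorphism range would require $m\le 2k-3$, which fails at the endpoint $m=2k-2$), so the appeal to an isomorphism is not available in general; happily you also cite Lemma~\ref{susp3}, which is what the paper uses and which suffices for the nontriviality of $h$ without any dimension restriction.
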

\begin{remark}
\label{r2}
Recall the operation of smooth suspension $S_\eps$ has been defined at the end of Section~\ref{FST} and it follows from Lemma~\ref{joasia} that
$P\circ S_\eps h$ is homotopic to $\phi$.
\end{remark}
\begin{proof}
Since $\pi$ is a homeomorphism, the map $\tilde{\phi}=\pi^{-1}\circ\phi:\Sph^m\to\Sph^k$ is well defined and not homotopic to a constant map. Next,
$m\leq 2k-2$, thus it follows from Lemma~\ref{freud} that $\tilde{\phi}$ is homotopic to the suspension $Sh$ of a map
$h\in C^\infty(\Sph^{m-1},\Sph^{k-1})$. Hence $\pi\circ Sh$ is homotopic to $\phi$ and thus $P\circ Sh$ is homotopic to $\phi$ for any map $P$ as in the statement of the lemma. The map $h$ is not homotopic to a constant map by Lemma~\ref{susp3}.
\end{proof}

We extend $h$ radially to the mapping
$$
\bbbr^m\ni x\
\stackrel{{H_1}}{\longmapsto} \
|x|\, h\left(\frac{x}{|x|}\right)\in\bbbr^k,
$$
so each sphere (centered at the origin) of radius $r$ is mapped to the sphere of radius $r$ by a scaled version of the mapping $h$.
Then we extend ${H_1}$ to the mapping
$$
\bbbr^{m+1}\ni (x,t)
\ \stackrel{H_2}{\longmapsto} \
\left(|x|\, h\left(\frac{x}{|x|}\right),t\right)\in\bbbr^{k+1}.
$$
The mapping $H_2$ maps the ($m+1$)-balls of radius $r$ centered at the $t$-axis (i.e. $x_{m+1}$ axis) to
($k+1$)-balls of radius $r$ centered at the $t$-axis (i.e. $x_{k+1}$ axis). Thus it maps $\overbar{\bbbb}^{m+1}$
onto $\overbar{\bbbb}^{k+1}$ and each of the balls $\hat{K}_i$ to a corresponding ball $K_i$ in
$\bbbr^{k+1}$.

Moreover, since $H_1$ is a scaled version of $h$ on each of the spheres centered at the origin, it follows that the restriction of $H_2$ to the boundaries of the balls
\begin{equation}
\label{B52}
H_2:\partial\bbbb^{m+1}\to\partial\bbbb^{k+1}
\quad
\text{and}
\quad
H_2:\partial \hat{K}_i\to\partial K_i
\quad
\text{for}\ i=1,\ldots, N,
\end{equation}
is the same mapping $Sh:\Sph^{m}\to\Sph^{k}$ (homotopic to $\tilde{\phi}$), up to a similarity in source and target.

In particular we have
$$
H_2:\overbar{\bbbb}^{m+1}\setminus\bigcup_{i=1}^N\textrm{int}\,{\hat{K}_i}\to
\overbar{\bbbb}^{k+1}\setminus \bigcup_{i=1}^N \textrm{int}\,K_i
$$
and we will consider the mapping $H_2$ restricted to that set only.

As explained at the end of Section~\ref{FST}, the mapping $H_1$ is not smooth at the origin and hence $H_2$ is not smooth along the $x_{m+1}$-axis.
In particular, the restrictions of $H_2$ in
\eqref{B52} are not smooth at the poles of the spheres.
However, the mappings \eqref{B52} are homotopic to the smooth suspension $S_\eps h$ discussed in Section~\ref{FST}.
Therefore we may modify $H_2$ to obtain a mapping that coincides with a scaled version of $S_\eps h$ on each of the spheres
$\partial\bbbb^{m+1}$ and $\partial\hat{K}_i$ and is smooth in a neighborhood of each of the spheres.
The resulting mapping is still not smooth on a compact subset of the $x_{m+1}$-axis that is in the interior of the set
$\overbar{\bbbb}^{m+1}\setminus\bigcup_{i=1}^N{\hat{K}_i}$ and hence it does not touch the spheres.
A standard mollification argument allows us to smooth it out and finally we obtain a smooth map
$$
H:\overbar{\bbbb}^{m+1}\setminus\bigcup_{i=1}^N\textrm{int}\,{\hat{K}_i}\to
\overbar{\bbbb}^{k+1}\setminus \bigcup_{i=1}^N \textrm{int}\,K_i
$$
that coincides with a scaled version of $S_\eps h$ on each of the spheres $\partial\bbbb^{m+1}$ and $\partial\hat{K}_i$
(see Figure \ref{fig:4}).

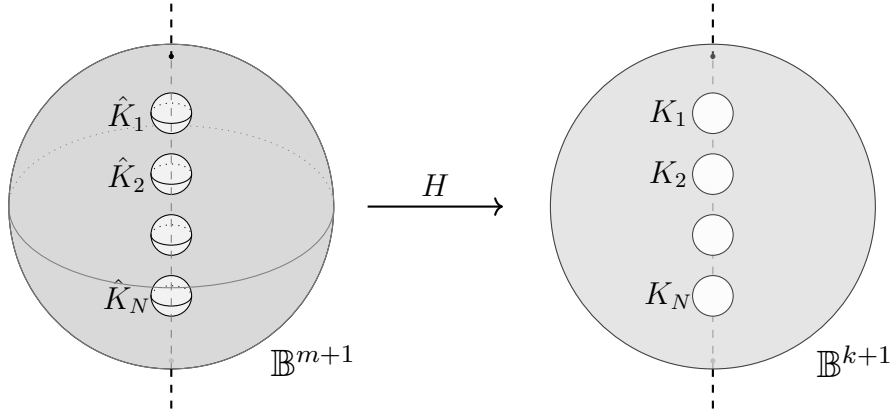
\begin{figure}[ht]
\begin{tikzpicture}[scale=0.9]
\begin{scope}[scale=0.6]
\node[scale=1.2] at (9.5,-3.8) {$\bbbb^{m+1}$};

\filldraw[fill=lightgray!60!white] (6,0) circle (4);
\draw[gray] (6,0) circle (4);

\draw[dotted,gray] (2,0) arc [x radius=4, y radius =2, start angle =180, end angle=0];
\foreach \i in {1,2,3,4}
{
\begin{scope}[shift={(0,-1.5*(\i-1))}]
\filldraw[fill= lightgray!20!white] (6,2.3) circle (0.5);
\draw (5.5,2.3) arc [x radius=0.5, y radius =0.25, start angle =-180, end angle=0];
\draw[dotted] (5.5,2.3) arc [x radius=0.5, y radius =0.25, start angle =180, end angle=0];
\end{scope}
}
\draw[dashed,thick] (6,5)--(6,3.6);
\draw[dashed, gray] (6,3.6)--(6,-4);
\draw[dashed,thick] (6,-4)--(6,-5);
\filldraw (6,3.7) circle (0.05);
\filldraw[lightgray] (6,-3.8) circle (0.05);

\node at (4.9,2.3) {$\hat K_1$};
\node at (4.9,0.8) {$\hat K_2$};
\node at (4.9,-2.2) {$\hat K_N$};
\draw[gray] (2,0) arc [x radius=4, y radius =2, start angle =-180, end angle=0];
\end{scope}
\draw[thick,->] (6.5,0)--(8.5,0);
\node[above] at (7.5,0) {$H$};

\begin{scope}[shift={(8,0)}, scale=0.6]
\node[scale=1.2] at (9.5,-3.8) {$\bbbb^{k+1}$};
\draw[dashed,thick] (6,5)--(6,3.6);
\draw[dashed, gray] (6,3.6)--(6,-4);
\draw[dashed,thick] (6,-4)--(6,-5);
\filldraw (6,3.7) circle (0.05);
\filldraw[lightgray] (6,-3.8) circle (0.05);
\filldraw[color= lightgray,opacity=0.4] (6,0) circle (4);
\draw[darkgray] (6,0) circle (4);
\foreach \i in {1,2,3,4}
{
\begin{scope}[shift={(0,-1.5*(\i-1))}]
\filldraw[draw=darkgray,fill=white!95!lightgray] (6,2.3) circle (0.5);
\end{scope}
}
\node at (4.9,2.3) {$K_1$};
\node at (4.9,0.8) {$K_2$};
\node at (4.9,-2.2) {$K_N$};

\end{scope}
\end{tikzpicture}
\caption{The smooth mapping $H$ maps
the spheres $\partial\bbbb^{m+1}$ and $\partial\hat{K}_i$ onto $\partial\bbbb^{k+1}$ and
$\partial K_i$ by a scaled copy of $S_\eps h$.}
\label{fig:4}
\end{figure}

Let the unit cube $Q=[-\frac{1}{2},\frac{1}{2}]^{k+1}\subset \bbbr^{k+1}$ be divided into an even grid of $N=n^{k+1}$ cubes $J_i$, of
edge length $1/n$.

Note that $Q\subset\frac{1}{2}\sqrt{k+1}\, \overbar{\bbbb}^{k+1}$.

In the next step we again use Lemma~\ref{lem:ma} to find a diffeomorphism $G_2$ that maps
$\overbar{\bbbb}^{k+1}$ to
$\frac{1}{2}\sqrt{k+1}\overbar{\bbbb}^{k+1}$ in such a way that
\begin{itemize}
\item $G_2$ maps $\partial \bbbb^{k+1}$ to $\partial(\frac{1}{2}\sqrt{k+1}\bbbb^{k+1})$ by similarity (in fact, scaling),
\item $G_2$ maps each of the balls $K_i$ into a ball $L_i$ such that the ball $\frac{11}{10} L_i$ is inscribed into the cube $J_i$, and $G_2|_{K_i}$ is a similarity (translation+scaling).
\end{itemize}
The diffeomorphism $G_2$ is depicted in Figure \ref{fig:5}.

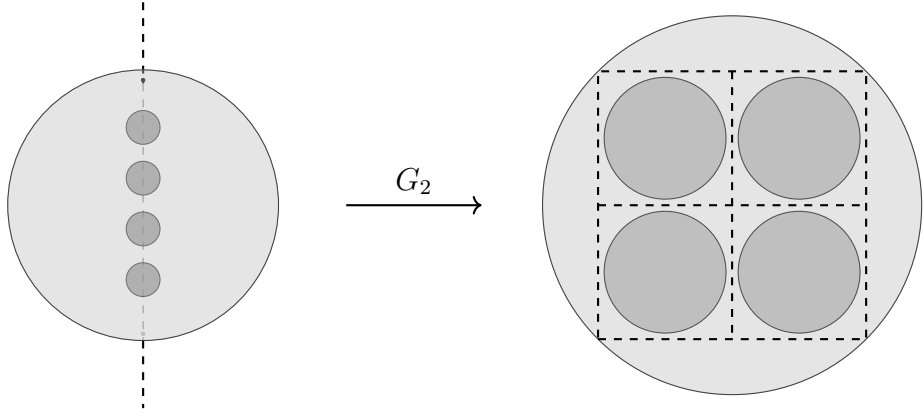
\begin{figure}[ht]
\begin{tikzpicture}[scale=0.9]

\draw[thick,->] (6,0)--(8,0);
\node[above] at (7,0) {$G_2$};

\begin{scope}[scale=0.5]

\draw[dashed,thick] (6,6)--(6,3.6);
\draw[dashed, gray] (6,3.6)--(6,-4);
\draw[dashed,thick] (6,-4)--(6,-6);
\filldraw (6,3.7) circle (0.05);
\filldraw[lightgray] (6,-3.8) circle (0.05);
\foreach \i in {0,1,2,3}
{
\begin{scope}[shift={(0,-1.5*\i)}]
\filldraw[draw=black,fill= gray,opacity=0.7] (6,2.3) circle (0.5);
\end{scope}
}

\filldraw[color= lightgray,opacity=0.4] (6,0) circle (4);
\draw[darkgray] (6,0) circle (4);
\end{scope}

\begin{scope}[shift={(7.5,0)}, scale=0.7]


\foreach \i in {-1,1}
{
\foreach \j in {-1,1}
{
\begin{scope}[shift={({sin(45)*2*\i} ,{sin(45)*2*\j})}]
\filldraw[draw=darkgray!10!black,fill= lightgray,opacity=1] (6,0) circle ({10/11*2*sin(45)});
\end{scope}
}
}

\filldraw[color= lightgray,opacity=0.4] (6,0) circle (4);
\draw[darkgray] (6,0) circle (4);
\draw[thick,dashed] ({6+4*cos(45)},{4*sin(45)})--({6+4*cos(-45)},{4*sin(-45)})--({6+4*cos(-135)},{4*sin(-135)})--({6+4*cos(135)},{4*sin(135)})--cycle;
\draw[thick,dashed] (6,{4*sin(45)})--(6,{-4*sin(45)});
\draw[thick,dashed] ({6-4*sin(45)},0)--({6+4*sin(45)},0);

\end{scope}
\end{tikzpicture}
\caption{The diffeomorphism $G_2$ rearranges the balls $K_i$ in $\bbbb^{k+1}$, mapping $\partial\bbbb^{k+1}$ by scaling to a ball of radius $\frac{1}{2}\sqrt{k+1}$ and balls $K_i$ to balls $L_i$, almost inscribed into a grid obtained by partitioning the unit cube $[-\frac{1}{2},\frac{1}{2}]^{k+1}$ into $N=n^{k+1}$ cubes of edge length $\frac{1}{n}$. }
\label{fig:5}
\end{figure}

Finally, we use the mapping $R$ defined in Lemma~\ref{lem:em} to project
$\frac{1}{2}\sqrt{k+1}\overbar{\bbbb}^{k+1}\setminus \bigcup_{i=1}^N L_i$ onto the $k$ dimensional complex
$S=\bigcup_i\partial J_i$, see Figure \ref{fig:6}.

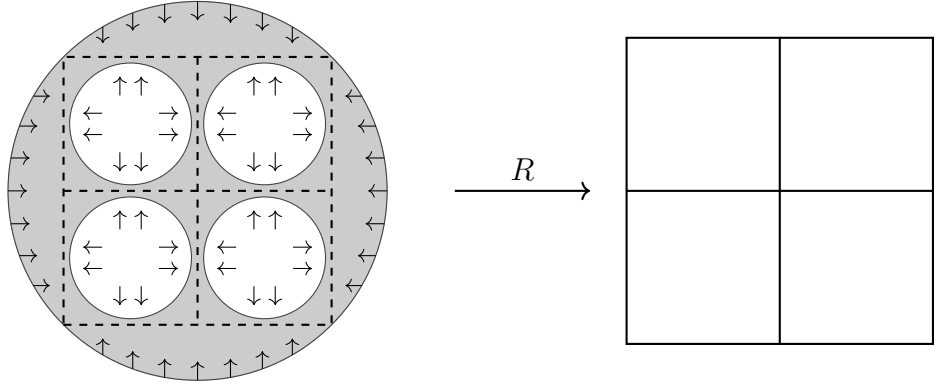
\begin{figure}[ht]
\begin{tikzpicture}[scale=0.9]

\begin{scope}[scale=0.7]


\filldraw[color= gray,opacity=0.4] (6,0) circle (4);
\draw[darkgray] (6,0) circle (4);
\draw[thick,dashed] ({6+4*cos(45)},{4*sin(45)})--({6+4*cos(-45)},{4*sin(-45)})--({6+4*cos(-135)},{4*sin(-135)})--({6+4*cos(135)},{4*sin(135)})--cycle;
\draw[thick,dashed] (6,{4*sin(45)})--(6,{-4*sin(45)});
\draw[thick,dashed] ({6-4*sin(45)},0)--({6+4*sin(45)},0);
\foreach \i in {-1,1}
{
\foreach \j in {-1,1}
{
\begin{scope}[shift={({sin(45)*2*\i} ,{sin(45)*2*\j})}]
\filldraw[draw=darkgray,fill= white] (6,0) circle ({10/11*2*sin(45)});
\foreach \t in {-0.75,0.75}
{
\draw[->] ({6+0.3*\t},{0.6})--({6+0.3*\t},1);
}

\foreach \t in {-0.75,0.75}
{
\draw[->] ({6+0.3*\t},{-0.6})--({6+0.3*\t},-1);
}
\foreach \t in {-0.75,0.75}
{
\draw[->] (5.4,{0.3*\t})--(5,{0.3*\t});
}

\foreach \t in {-0.75,0.75}
{
\draw[->] (6.6,{0.3*\t})--(7,{0.3*\t});
}
\end{scope}
}
}
\foreach \t in {60,70,...,120}
{
\draw[->] ({6+4*cos(\t)},{4*sin(\t)})--({6+4*cos(\t)},{3.6*sin(\t)});
}

\foreach \t in {-30,-20,...,30}
{
\draw[->] ({6+4*cos(\t)},{4*sin(\t)})--({6+3.6*cos(\t)},{4*sin(\t)});
}
\foreach \t in {-60,-70,...,-120}
{
\draw[->] ({6+4*cos(\t)},{4*sin(\t)})--({6+4*cos(\t)},{3.6*sin(\t)});
}
\foreach \t in {150,160,...,210}
{
\draw[->] ({6+4*cos(\t)},{4*sin(\t)})--({6+3.6*cos(\t)},{4*sin(\t)});
}
\end{scope}
\begin{scope}[shift={(8,0)},scale=0.8]

\filldraw[thick, fill=white] ({6+4*cos(45)},{4*sin(45)})--({6+4*cos(-45)},{4*sin(-45)})--({6+4*cos(-135)},{4*sin(-135)})--({6+4*cos(135)},{4*sin(135)})--cycle;
\draw[thick] (6,{4*sin(45)})--(6,{-4*sin(45)});
\draw[thick] ({6-4*sin(45)},0)--({6+4*sin(45)},0);
\end{scope}
\draw[thick,->] (8,0)--(10,0);
\node[above] at (9,0) {$R$};
\end{tikzpicture}
\caption{ In the final step we use the mapping $R$ to project the image of $\overbar{\Omega}_o$, here in darker shade, onto the $k$-complex $S$.}
\label{fig:6}
\end{figure}

Let $\hat F=R\circ G_2\circ H \circ G_1:\overbar{\Omega}_o=\overbar{\bbbb}^{m+1}\setminus \bigcup_{i=1}^N \bbbb_i \to S$.

On the boundary of each ball $\overbar{\bbbb}_i$ the mapping $\hat F|_{\partial\bbbb_i}\to\partial J_i$ is, up to a similarity in source and image,
identical with some fixed mapping $g:\Sph^m\to \partial [-\frac{1}{2},\frac{1}{2}]^{k+1}$,
that is homotopic to $\phi$ by Lemma~\ref{joasia} and Remark~\ref{r2}, see Figure~\ref{fig:7}.
Similarly, $\hat F|_{\partial \bbbb^{m+1}}:\partial \bbbb^{m+1}\to\partial [-\frac{1}{2},\frac{1}{2}]^{k+1} $
is homotopic to $\phi$ (but not necessarily equal, up to scaling, to $g$).
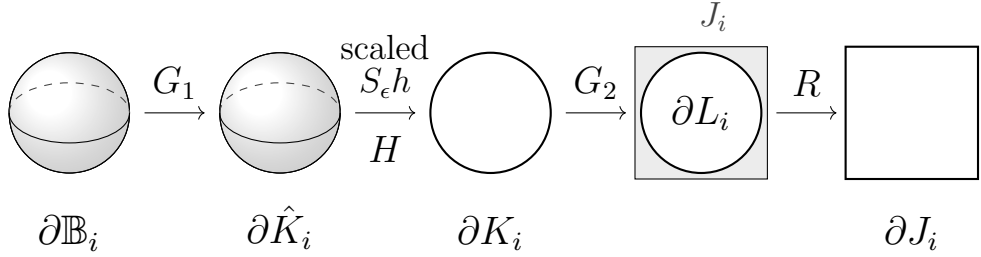
\begin{figure}[ht]
\begin{tikzpicture}[scale=0.8]

\foreach \i in {0,1}
{
\begin{scope}[shift={({3.5*\i},0)}]
\filldraw[shading=ball, ball color= white] (1,0) circle (1);
\filldraw[fill= white, opacity=0.7] (1,0) circle (1);
\draw (1,0) circle (1);
\draw (0,0) arc [x radius=1, y radius =0.5, start angle=-180, end angle=0];
\draw[darkgray, dashed] (0,0) arc [x radius=1, y radius =0.5, start angle=180, end angle=0];
\end{scope}
}
\filldraw[thick,fill=white,draw=black] (8,0) circle (1);
\filldraw[fill=lightgray!30!white,draw=] (10.4,1.1) -- (12.6,1.1) -- (12.6,-1.1) -- (10.4,-1.1) -- cycle;
\filldraw[thick,fill=white,draw=black] (11.5,0) circle (1);
\draw[thick] (13.9,1.1) -- (16.1,1.1) -- (16.1,-1.1) -- (13.9,-1.1) -- cycle;
\node[scale=1.3] at  (1,-2) {$\partial\bbbb_i$};
\node[scale=1.25] at  (4.5,-1.9) {$\partial\hat K_i$};
\node[scale=1.25] at  (8,-2) {$\partial K_i$};
\node[darkgray,scale=1.1] at  (11.7,1.6) {$J_i$};
\node[scale=1.25] at  (11.5,0) {$\partial L_i$};
\node[scale=1.25] at  (15,-2) {$\partial J_i$};

\draw[->] (2.25,0)--(3.25,0);
\node[scale=1.2] at (2.75,0.5) {$G_1$};
\draw[->] (5.75,0)--(6.75,0);
\node[scale=1.1] at (6.25,1.1) {scaled};
\node[scale=1.1] at (6.25,0.5) {$S_\epsilon h$};
\node[scale=1.2] at (6.25,-0.6) {$H$};
\draw[->] (9.25,0)--(10.25,0);
\node[scale=1.2] at (9.75,0.5) {$G_2$};
\draw[->] (12.75,0)--(13.75,0);
\node[scale=1.2] at (13.25,0.5) {$R$};
\end{tikzpicture}
\caption{The fate of $\partial \bbbb_i$ throughout the construction.}
\label{fig:7}
\end{figure}

Since we want to iterate the construction, by gluing into each of $\overbar{\bbbb}_i$ a rescaled copy of the mapping $\hat F$,
we want to ensure that the maps indeed glue in a $C^\infty$ manner (although $C^1$ would be enough).
To this end, we want to have $\hat F|_{\partial \bbbb^{m+1}}$ and $\hat F|_{\partial \bbbb_i}$ equal, up to scaling,
to the mapping $\phi$ (given in the statement of Lemma~\ref{enough}).
Moreover, for the maps to glue in a $C^\infty$ manner we want the map $\hat F$,
in a neighborhood of the boundary of $\overbar{\Omega}_o=\overbar{\bbbb}^{m+1}\setminus \bigcup_i\bbbb_i$,
to be constant in the normal directions to the boundary of $\Omega_o$.

At the moment, $\hat F|_{\partial \bbbb^{m+1}}$ and $\hat F|_{\partial \bbbb_i}$ are only homotopic to $\phi$.
Let us thus correct $\hat F$ in three steps in the following way.
\begin{itemize}
\item[-]
First, we modify $\hat{F}$ to a smooth map $\hat{F}_1$ which coincides with $\hat{F}$ on
$\frac{2}{3}\overbar{\bbbb}^{m+1}\setminus\bigcup_{i=1}^N\bbbb_i$, and
$\hat{F}_1|_{\partial\bbbb^{m+1}}$ equals $\phi$.
This is possible, because $\hat{F}_1|_{\partial(\frac{2}{3}\bbbb^{m+1})}$ and $\phi$ are smoothly homotopic
(up to the scaling that identifies
$\partial(\frac{2}{3}\bbbb^{m+1})$ with $\partial\bbbb^{m+1}$) as mappings into $\partial[-\frac{1}{2},\frac{1}{2}]^{k+1}$.
\item[-] Next, we want to ensure that the mapping is constant along radii on
$\overbar{\bbbb}^{m+1}\setminus\frac{3}{4}\bbbb^{m+1}$
(which is a smaller annulus than $\overbar{\bbbb}^{m+1}\setminus\frac{2}{3}\bbbb^{m+1}$). We can do it by pre-composing $\hat{F}_1$ with the mapping
$$
\overbar{\bbbb}^{m+1}\ni x
\stackrel{\Phi_1}{\longmapsto}
\lambda_{\frac{2}{3},\frac{3}{4}}(|x|) \frac{x}{|x|}\in\overbar{\bbbb}^{m+1},
$$
where the function $\lambda_{s,t}$ is as defined at the beginning of the proof.
The mapping $\hat{F}_2=\hat{F}_1\circ\Phi_1$ is constant along the radii on $\bbbb^{m+1}\setminus\frac{3}{4}\bbbb^{m+1}$ and $\hat{F}_2$
equals $\phi$ on $\partial\bbbb^{m+1}$.
\item[-] Using the same argument as above we can modify $\hat{F}_2$ in a small neighborhood of each of the boundaries $\partial\bbbb_i$ so that the resulting mapping $F_o$ equals $\phi$ (up to scaling) on each of the spheres $\partial\bbbb_i$, is constant in normal directions near $\partial\bbbb_i$ and
maps $\partial\bbbb_i$ onto $\partial J_i$.
\end{itemize}

The mapping $F_o:\overbar{\Omega}_o\to S\subset [-\frac{1}{2},\frac{1}{2}]^{k+1}$ is our initial step of the construction.

To inductively fill the map $F_o$ into the holes $\bbbb_i$ (up to a Cantor set),
we associate to each $\overbar{\bbbb}_i$ its center $x_i$ and the similarity map
$$
\sigma_i \colon \overbar{\bbbb}^{m+1} \to \overbar{\bbbb}_i,
\qquad
\sigma_i(x)=\frac{2}{n}x + x_i;
$$
recall that the radius of $\bbbb_i$ equals $\frac{2}{n}$.

Each mapping $\sigma_i$ maps $\overbar{\Omega}_o$ into the ball $\overbar{\bbbb}_i$, so that if
$$
\overbar{\Omega}_1=\bigcup_{i=1}^N\sigma_i(\overbar{\Omega}_o)
\quad
\text{and}
\quad
D_1=\overbar{\Omega}_o\cup\overbar{\Omega}_1,
$$
then the set $D_1$ is obtained by adding to
$\overbar{\Omega}_o$ scaled copies of $\overbar{\Omega}_o$ inside each of the holes
$\bbbb_i$.
The set $\overbar{\Omega}_o$ has $n^{k+1}$ holes, each of radius $\frac{2}{n}$ while $D_1$ has
$(n^{k+1})^2$ holes, each of radius $(\frac{2}{n})^2$.

We define now inductively
$$
\overbar{\Omega}_\ell=\bigcup_{i=1}^N \sigma_i(\overbar{\Omega}_{\ell-1}),
\qquad
D_\ell = \bigcup_{j=0}^\ell \overbar{\Omega}_j
$$
so $D_\ell$ has $(n^{k+1})^{\ell+1}$ holes, each of radius $(\frac{2}{n})^{\ell+1}$.

Let $D=\bigcup_{\ell=0}^{\infty} D_\ell$ so $C=\overbar{\bbbb}^{m+1}\setminus D$ is a Cantor set.
The mapping $F_o$ has already been defined and we define inductively
$$
F_\ell:D_\ell\to \left[-\frac{1}{2},\frac{1}{2}\right]^{k+1}
$$
for each $\ell\geq 1$ by
$$
F_\ell|_{D_{\ell-1}}=F_{\ell-1},
\quad
F_\ell|_{\sigma_i(\Omega_{\ell-1})}=\tau_i\circ F_{\ell-1}\circ\sigma_i^{-1},
\quad
i=1,2,\ldots,N,
$$
where
$$
\tau_i:\left[-\frac{1}{2},\frac{1}{2}\right]^{k+1}\to J_i\ \
\stackrel{\text{isometric}}{\approx} \ \ \left[-\frac{1}{2n},\frac{1}{2n}\right]^{k+1}
$$
is the translation and scaling transformation.

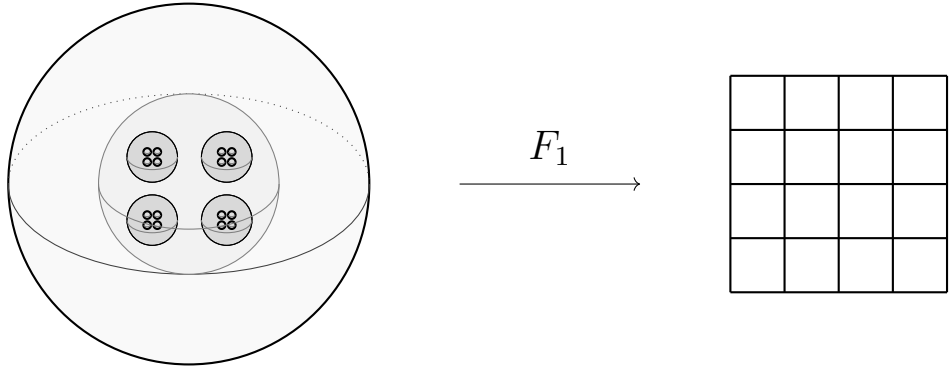
\begin{figure}
\begin{tikzpicture}[scale=0.6]
\filldraw[thick,fill= lightgray!10!white] (6,0) circle (4);
\draw[darkgray] (2,0) arc [x radius=4, y radius=2, start angle=-180, end angle=0];
\draw[darkgray, dotted] (2,0) arc [x radius=4, y radius=2, start angle=180, end angle=0];
\filldraw[fill= lightgray!20!white,draw=gray] (6,0) circle (2);

\begin{scope}[shift={(4.35,0.6)},scale=0.14]

\filldraw[fill=lightgray!60!white] (6,0) circle (2);
\draw[gray] (4,0) arc [x radius=2, y radius =1, start angle =-180, end angle=0];
\filldraw[fill=lightgray!60!white] (6,0) circle (4);
\draw (6,0) circle (4);
\draw[gray] (2,0) arc [x radius=4, y radius =2, start angle =-180, end angle=0];
\filldraw[thick,fill=lightgray!50!white] (5.2,0.8) circle (0.6);
\draw[gray] (4.7,0.8) arc [x radius=0.5, y radius =0.25, start angle =-180, end angle=0];
\filldraw[thick,fill=lightgray!50!white] (6.8,0.8) circle (0.6);
\draw[gray] (6.3,0.8) arc [x radius=0.5, y radius =0.25, start angle =-180, end angle=0];
\filldraw[thick,fill=lightgray!50!white] (5.2,-0.8) circle (0.6);
\draw[gray] (4.7,-0.8) arc [x radius=0.5, y radius =0.25, start angle =-180, end angle=0];
\filldraw[thick,fill=lightgray!50!white] (6.8,-0.8) circle (0.6);
\draw[gray] (6.3,-0.8) arc [x radius=0.5, y radius =0.25, start angle =-180, end angle=0];
\end{scope}
\begin{scope}[shift={(6,0.6)},scale=0.14]

\filldraw[fill=lightgray!60!white] (6,0) circle (2);
\draw[lightgray!50!gray] (4,0) arc [x radius=2, y radius =1, start angle =-180, end angle=0];
\filldraw[fill=lightgray!60!white] (6,0) circle (4);
\draw (6,0) circle (4);
\draw[gray] (2,0) arc [x radius=4, y radius =2, start angle =-180, end angle=0];
\filldraw[thick,fill=lightgray!50!white] (5.2,0.8) circle (0.6);
\draw[gray] (4.7,0.8) arc [x radius=0.5, y radius =0.25, start angle =-180, end angle=0];
\filldraw[thick,fill=lightgray!50!white] (6.8,0.8) circle (0.6);
\draw[gray] (6.3,0.8) arc [x radius=0.5, y radius =0.25, start angle =-180, end angle=0];
\filldraw[thick,fill=lightgray!50!white] (5.2,-0.8) circle (0.6);
\draw[gray] (4.7,-0.8) arc [x radius=0.5, y radius =0.25, start angle =-180, end angle=0];
\filldraw[thick,fill=lightgray!50!white] (6.8,-0.8) circle (0.6);
\draw[gray] (6.3,-0.8) arc [x radius=0.5, y radius =0.25, start angle =-180, end angle=0];
\end{scope}

\begin{scope}[shift={(4.35,-0.8)},scale=0.14]

\filldraw[fill=lightgray!60!white] (6,0) circle (2);
\draw[lightgray!50!gray] (4,0) arc [x radius=2, y radius =1, start angle =-180, end angle=0];
\filldraw[fill=lightgray!60!white] (6,0) circle (4);
\draw (6,0) circle (4);
\draw[gray] (2,0) arc [x radius=4, y radius =2, start angle =-180, end angle=0];
\filldraw[thick,fill=lightgray!50!white] (5.2,0.8) circle (0.6);
\draw[gray] (4.7,0.8) arc [x radius=0.5, y radius =0.25, start angle =-180, end angle=0];
\filldraw[thick,fill=lightgray!50!white] (6.8,0.8) circle (0.6);
\draw[gray] (6.3,0.8) arc [x radius=0.5, y radius =0.25, start angle =-180, end angle=0];
\filldraw[thick,fill=lightgray!50!white] (5.2,-0.8) circle (0.6);
\draw[gray] (4.7,-0.8) arc [x radius=0.5, y radius =0.25, start angle =-180, end angle=0];
\filldraw[thick,fill=lightgray!50!white] (6.8,-0.8) circle (0.6);
\draw[gray] (6.3,-0.8) arc [x radius=0.5, y radius =0.25, start angle =-180, end angle=0];
\end{scope}
\begin{scope}[shift={(6,-0.8)},scale=0.14]

\filldraw[fill=lightgray!60!white] (6,0) circle (2);
\draw[lightgray!50!gray] (4,0) arc [x radius=2, y radius =1, start angle =-180, end angle=0];
\filldraw[fill=lightgray!60!white] (6,0) circle (4);
\draw (6,0) circle (4);
\draw[gray] (2,0) arc [x radius=4, y radius =2, start angle =-180, end angle=0];
\filldraw[thick,fill=lightgray!50!white] (5.2,0.8) circle (0.6);
\draw[gray] (4.7,0.8) arc [x radius=0.5, y radius =0.25, start angle =-180, end angle=0];
\filldraw[thick,fill=lightgray!50!white] (6.8,0.8) circle (0.6);
\draw[gray] (6.3,0.8) arc [x radius=0.5, y radius =0.25, start angle =-180, end angle=0];
\filldraw[thick,fill=lightgray!50!white] (5.2,-0.8) circle (0.6);
\draw[gray] (4.7,-0.8) arc [x radius=0.5, y radius =0.25, start angle =-180, end angle=0];
\filldraw[thick,fill=lightgray!50!white] (6.8,-0.8) circle (0.6);
\draw[gray] (6.3,-0.8) arc [x radius=0.5, y radius =0.25, start angle =-180, end angle=0];
\end{scope}
\draw[lightgray!50!gray,draw=gray] (4,0) arc [x radius=2, y radius =1, start angle =-180, end angle=0];

\begin{scope}[scale=1.2]
\foreach \i in {0,1,...,4}
{
\draw[thick] (15+\i,2)--(15+\i,-2);
\draw[thick] (15,2-\i)--(19,2-\i);
}
\end{scope}

\draw[->] (12,0)--(16,0);
\node[scale=1.3] at (14,0.8) {$F_1$};
\end{tikzpicture}
\caption{The second iteration $F_1$.}
\end{figure}

Now $F:D\to [-\frac{1}{2},\frac{1}{2}]^{k+1}$ is given by $F|_{D_\ell}=F_\ell$ for each $\ell=0,1,2,\ldots$
This map is smooth in $D$ and it continuously extends to the Cantor set $C$. Moreover
$$
\Vert d F_\ell|_{\Omega_\ell}\Vert_\infty =
\frac{1}{n} \Vert d F_{\ell-1}|_{\Omega_{\ell-1}}\Vert_\infty \frac{n}{2}
=\ldots=
\frac{1}{2^\ell} \Vert d F_o|_{\Omega_o}\Vert_\infty \to 0.
$$
Thus $F$ is continuously differentiable also on the Cantor set $C$ with
$DF|_C\equiv 0$.
This completes the proof (and the article).
\end{proof}

\end{document}